\crefname{equation}{}{}
\crefname{figure}{{\sc Figure}}{{\sc Figure}}
\crefname{subsection}{Subsection}{Subsections}
\newtheorem{theorem}{Theorem}[section]
\newtheorem{proposition}[theorem]{Proposition}
\newtheorem{lemma}[theorem]{Lemma}
\newtheorem*{claim*}{Claim}
\theoremstyle{definition}
\newtheorem{algorithm}[theorem]{Algorithm}
\newtheorem{procedure}[theorem]{Procedure}
\newtheorem{example}[theorem]{Example}
\newtheorem{definition}[theorem]{Definition}
\newtheorem{remark}[theorem]{Remark}
\numberwithin{equation}{section} \numberwithin{figure}{section}
\numberwithin{table}{section}
\def\Z{\mathbb Z}
\def\C{\mathbb C}
\newcommand{\nc}{\newcommand}
\nc{\smallsearrow}{\scalebox{0.4}{$\boldsymbol{\searrow}$}}
\nc{\ttau}{\tau}
\nc{\bbd}{\mathbb{d}}
\nc{\bbe}{\mathbb{e}}
\nc{\sfIES}{\mathsf{IES}}
\nc{\sfent}{\mathsf{ent}}
\nc{\sfmIES}{\mathsf{\underline{IES}}}
\nc{\sfRSP}{\mathsf{RSP}}
\nc{\sfdg}{\mathsf{dg}}
\nc{\sfrev}{\mathsf{r}}
\nc{\sfrevmap}{\mathsf{rev}}
\nc{\hsfdg}{\widehat{\tcd}}
\nc{\ttd}{\mathtt{d}}
\nc{\sfP}{\mathsf{P}}
\nc{\bfY}{\mathbf{Y}}
\nc{\sfS}{\mathsf{S}}
\nc{\sfseq}{\mathsf{seq}}
\nc{\sfDS}{\mathsf{DS}}
\nc{\sfst}{\mathsf{st}}
\nc{\tDC}{\mathtt{DC}}
\nc{\scrC}{\mathscr{C}}
\nc{\SR}{\mathcal{SR}}
\nc{\rmconj}{\mathrm{conj}}
\nc{\fraki}{\mathfrak{i}}
\nc{\rmset}{\mathrm{set}}
\nc{\rmlex}{\mathrm{lex}}
\nc{\sfCell}{\mathsf{Cell}}
\nc{\ocard}{\overline{\mathrm{card}}}
\nc{\omin}{\overline{\mathrm{min}}}
\nc{\omax}{\overline{\mathrm{max}}}
\nc{\calL}{\mathcal{L}}
\nc{\calK}{\mathcal{K}}
\nc{\calI}{\mathcal{I}}
\nc{\calFo}{\mathcal{Fo}}
\nc{\sfM}{\mathsf{M}}
\nc{\sfm}{\mathsf{m}}
\nc{\Ptab}{\mathsf{P}}
\nc{\hatPtab}{\hat{\mathsf{P}}}
\nc{\checkPtab}{\check{\mathsf{P}}}
\nc{\hatQtab}{\hat{\mathsf{Q}}}
\nc{\checkQtab}{\check{\mathsf{Q}}}
\nc{\Qtab}{\mathsf{Q}}
\nc{\pt}{T^{\scalebox{0.5}{$\nearrow$}}}
\nc{\rmIm}{\mathrm{Im}}
\nc{\rmInvC}{\mathrm{InvCell}}
\nc{\rmInv}{\mathrm{Inv}}
\nc{\rmInt}{\mathrm{Int}}
\nc{\Deq}{\overset{D}{\simeq}}
\nc{\Dleq}{\preceq_D}
\nc{\bdP}{\boldsymbol{P}}
\nc{\bdI}{\boldsymbol{I}}
\nc{\soc}{\mathrm{soc}}
\nc{\balproj}{\bal_{\mathrm{proj}}}
\nc{\balinj}{\bal_{\mathrm{inj}}}
\nc{\SG}{\mathfrak{S}}
\nc{\frakR}{\mathfrak{R}}
\nc{\frakL}{\mathfrak{L}}
\nc{\PCT}{\mathrm{PCT}}
\nc{\SPCT}{\mathrm{SPCT}}
\nc{\RT}{\mathrm{RT}}
\nc{\SRT}{\mathrm{SRT}}
\nc{\SYT}{\mathrm{SYT}}
\nc{\SAF}{\mathrm{SAF}}
\nc{\SSYT}{\mathrm{SSYT}}
\nc{\LR}{\mathrm{LR}}
\nc{\sgn}{\mathrm{sgn}}
\nc{\RCT}{\mathrm{RCT}}
\nc{\SRCT}{\mathrm{SRCT}}
\nc{\SYRT}{\mathrm{SYRT}}
\nc{\nSYCT}{\mathrm{SET}_{\mathbf{sed}}}
\nc{\SYCT}{\mathrm{SYCT}}
\nc{\YCT}{\mathrm{YCT}}
\nc{\CT}{\mathrm{CT}}
\nc{\SCT}{\mathrm{SCT}}
\nc{\SPYCT}{\mathrm{SPYCT}}
\nc{\tst}{\mathtt{st}}
\nc{\Span}{\mathrm{span}}
\nc{\comp}{\mathrm{comp}}
\nc{\rmst}{\mathrm{st}}
\nc{\Des}{\mathrm{Des}}
\nc{\set}{\mathrm{set}}
\nc{\wt}{\mathrm{wt}}
\nc{\ch}{\mathrm{ch}}
\nc{\id}{\mathrm{id}}
\nc{\Sym}{\mathrm{Sym}}
\nc{\Qsym}{\mathrm{QSym}}
\nc{\Nsym}{\mathrm{NSym}}
\nc{\hatsh}{\hat{\mathrm{sh}}}
\nc{\CTra}{\stackrel{{\rm CT}}{\ra}}
\nc{\sh}{\mathrm{sh}}
\nc{\bfS}{\mathbf{S}}
\nc{\bfm}{\mathbf{m}}
\nc{\hbfS}{\hat{\mathbf{S}}}
\nc{\bfF}{\mathbf{F}}
\nc{\tilal}{\widetilde{\alpha}}
\nc{\tilcalW}{\widetilde{\mathcal{W}}}
\nc{\calA}{\mathcal{A}}
\nc{\calB}{\mathcal{B}}
\nc{\calG}{\mathcal{G}}
\nc{\calR}{\mathcal{R}}
\nc{\calS}{\mathcal{S}}
\nc{\hcalS}{\hat{\mathcal{S}}}
\nc{\scrS}{\mathscr{S}}
\nc{\hscrS}{\hat{\mathscr{S}}}
\nc{\calV}{\mathcal{V}}
\nc{\sfR}{\mathsf{R}}
\nc{\tal}{\lambda(\alpha)}
\nc{\tbe}{\widetilde{\beta}}
\nc{\opi}{\overline{\pi}}
\nc{\calP}{\mathcal{P}}
\nc{\rmtop}{\mathrm{top}}
\nc{\rad}{\mathrm{rad}}
\nc{\bfP}{\mathbf{P}}
\nc{\bfw}{\mathbf{w}}
\nc{\SET}{\mathrm{SET}}
\nc{\SIT}{\mathrm{SIT}}
\nc{\SSAF}{\mathrm{SSAF}}
\nc{\SSYCT}{\mathrm{SSYCT}}
\nc{\tcd}{\mathtt{cd}}
\nc{\trcd}{\mathtt{rcd}}
\nc{\tyd}{\mathtt{yd}}
\nc{\trd}{\mathtt{rd}}
\nc{\rmr}{\mathrm{r}}
\nc{\rmc}{\mathrm{c}}
\nc{\rmt}{\mathrm{t}}
\nc{\len}{\mathsf{len}}
\nc{\col}{\mathrm{col}}
\nc{\row}{\mathrm{row}}
\nc{\calE}{\mathcal{E}}
\nc{\calT}{\mathscr{T}}
\nc{\sfT}{\mathsf{T}}
\nc{\calEsa}{\mathcal{E}^\upsig(\alpha)}
\nc{\tauC}{\tau_{\scalebox{0.5}{$C$}}}
\nc{\sytabC}{\sytab_{\scalebox{0.5}{$C$}}}
\nc{\pr}{\mathsf{pr}}
\nc{\Ups}{\Upsilon}
\nc{\pact}{\diamond}
\nc{\tauE}{\tau_{E}^{~}}
\nc{\tauF}{\tau_{\scalebox{0.5}{$F$}}}
\nc{\tauG}{\tau_{\scalebox{0.5}{$G$}}}
\nc{\rtE}{T_{\scalebox{0.5}{$E$}}}
\nc{\rtF}{T_{\scalebox{0.5}{$F$}}}
\nc{\rtG}{T_{\scalebox{0.5}{$G$}}}
\nc{\sytab}{\widehat{\tau}}
\nc{\hatE}{\widehat{E}}
\nc{\hati}{\hat{i}}
\nc{\hcalE}{\widehat{\calE}}
\nc{\hatC}{\widehat{C}}
\nc{\bal}{{\boldsymbol{\upalpha}}}
\nc{\SPYRT}{\mathrm{SPYRT}}
\nc{\bgam}{{\boldsymbol{\upgamma}}}
\nc{\bdel}{{\boldsymbol{\updelta}}}
\nc{\weakcon}{\odot}
\nc{\basisI}{I}
\nc{\ldalpha}{\lambda(\alpha)}
\nc{\SRIT}{\mathrm{SRIT}}
\nc{\re}{\mathrm{rev}}
\nc{\otau}{\overline{\tau}}
\nc{\rtop}{{\rm top}}
\nc{\sfc}{\mathsf{c}}
\nc{\sfC}{\mathsf{C}}
\nc{\sfr}{\mathsf{r}}
\nc{\sfrow}{\mathsf{w_r}}
\nc{\tH}{\mathtt{H}}
\nc{\tV}{\mathtt{V}}
\nc{\rpi}{\mathring{\pi}}
\nc{\cpi}{\check{\pi}}
\nc{\frakm}{\mathfrak{m}}
\nc{\sfem}{\mathsf{em}}
\nc{\Hom}{\mathrm{Hom}}
\nc{\module}{\mathrm{mod} \, }
\nc{\sfw}{\mathsf{w}}
\nc{\sfcol}{\mathsf{w_c}}
\nc{\tread}{\underline{\mathsf{read}}}
\nc{\ocalE}{\overline{\calE}}
\nc{\oE}{\overline{E}}
\nc{\SPCTsa}{\SPCT^\upsig(\alpha)}
\nc{\bfSsa}{\bfS_\alpha^\upsig}
\nc{\bfSsaC}{{\bfS}^\upsig_{\alpha,C}}
\nc{\hbfSsa}{\widehat{\bfS}_\alpha^\upsig}
\nc{\upineq}{\rotatebox{90}{$<$}}
\nc{\downineq}{\rotatebox{270}{$<$}}
\nc{\diagineq}{\rotatebox{135}{$<$}}
\nc{\sfB}{\mathsf{B}}
\nc{\hxi}{\widehat{\xi}}
\nc{\hxidwJ}{\hxi_{\scalebox{0.55}{$J$}}}
\nc{\hxiupJ}{\hxi^{\scalebox{0.55}{$J$}}}
\nc{\bfT}{\mathbf{T}}
\nc{\tshuffle}{\,\widetilde{\shuffle}\,}
\nc{\sJ}{\scalebox{0.55}{$J$}}
\nc{\sJo}{\scalebox{0.55}{$J_1$}}
\nc{\sJt}{\scalebox{0.55}{$J_2$}}
\nc{\Keq}{\overset{K}{\cong}}
\nc{\dKeq}{\overset{K^*}{\cong}}
\nc{\Rect}{\mathrm{Rect}}
\nc{\ra}{\rightarrow}
\nc{\matr}[2]{\left( \hspace{-1ex} \begin{array}{c} #1 \\ #2 \end{array} \hspace{-1ex} \right)}
\definecolor{wsgreen}{rgb}{0,0.5,0}
\nc{\DRIT}{\mathrm{DRIT}}
\nc{\hpi}{\pi}
\nc{\rmInc}{\mathrm{Inc}}
\nc{\hfrakI}{\widehat{\mathfrak{I}}}
\nc{\orho}{\overline{\rho}}
\nc{\autotheta}{\uptheta}
\nc{\calW}{\mathcal{W}}
\nc{\Endo}{\mathrm{End}}
\nc{\autophi}{\upphi}
\nc{\autochi}{\upchi}
\nc{\autoomega}{\upomega}
\nc{\hIM}{\widehat{\sfB}}
\nc{\bfpi}{\boldsymbol{\uppi}}
\nc{\bfopi}{\overline{\boldsymbol{\uppi}}}
\nc{\osfB}{\overline{\sfB}}
\nc{\rmw}{\mathrm{w}}
\nc{\conc}{\; {\bullet} \;}
\nc{\ostar}{\; \overline{\bullet} \;}
\nc{\rank}{\mathrm{rank}}
\nc{\fkp}{\mathfrak{p}}
\nc{\bfR}{\mathbf{R}}
\nc{\calD}{\overline{\mathrm{Des}}}
\nc{\upsig}{{\boldsymbol{\upsigma}}}
\nc{\bfSsaE}{{\bfS}^\upsig_{\alpha,E}}
\nc{\hfkp}{\widehat{\mathfrak{p}}}
\nc{\hautophi}{{\widehat{\autophi}}}
\nc{\hautotheta}{{\widehat{\autotheta}}}
\nc{\hautoomega}{{\widehat{\autoomega}}}
\nc{\rmperm}{\mathrm{perm}}
\nc{\Hnmod}{\text{$H_n(0)$-$\mathbf{mod}$}}
\nc{\modHn}{\text{$\mathbf{mod}$-$H_n(0)$}}
\nc{\bfsigJ}{\sigma_{\scalebox{0.55}{$J$}}}
\nc{\bfrhoJ}{\rho^{\scalebox{0.55}{$J$}}}
\nc{\teta}{\widetilde{\eta}}
\nc{\urmw}{\underline{\mathrm{w}}}
\nc{\sfcnt}{\mathsf{cnt}}
\nc{\pistar}[1]{\pi_{#1}^*}
\nc{\wfkp}{\widetilde{\mathfrak{p}}}
\nc{\bfpsi}{\boldsymbol{\uppsi}}
\nc{\yt}[1]{\todo[size=\tiny,color=blue!10]{#1 \\ \hfill --- Young-Tak}}
\nc{\YT}[1]{\todo[size=\tiny,inline,color=blue!10]{#1
		\\ \hfill --- Young-Tak}}
\nc{\yh}[1]{\todo[size=\tiny,color=cyan!10]{#1 \\ \hfill --- Young-Hun}}
\nc{\YH}[1]{\todo[size=\tiny,inline,color=cyan!10]{#1
		\\ \hfill --- Young-Hun}}
\nc{\sy}[1]{\todo[size=\tiny,color=magenta!10]{#1 \\ \hfill --- So-Yeon}}
\nc{\SY}[1]{\todo[size=\tiny,inline,color=magenta!10]{#1
		\\ \hfill --- So-Yeon}}
\nc{\nt}[1]{\todo[size=\tiny,color=green!10]{#1 \\ \hfill --- Note}}
\nc{\NT}[1]{\todo[size=\tiny,inline,color=green!10]{#1
		\\ \hfill --- Note}}
\definecolor{purple}{rgb}{0.44, 0.0, 1.0}
\definecolor{yhblue}{rgb}{0,0,0.6}
\newenvironment{red}{\relax\color{red}}{\hspace*{.5ex}\relax}
\newenvironment{blue}{\relax\color{yhblue}}{\hspace*{.5ex}\relax}
\newenvironment{green}{\relax\color{wsgreen}}{\hspace*{.5ex}\relax}
\newenvironment{magenta}{\relax\color{magenta}}{\hspace*{.5ex}\relax}
\newenvironment{purple}{\relax\color{purple}}{\hspace*{.5ex}\relax}
\nc{\ber}{\begin{red}}
\nc{\er}{\end{red}}
\nc{\beb}{\begin{blue}}
\nc{\eb}{\end{blue}}
\nc{\bema}{\begin{magenta}}
\nc{\ema}{\end{magenta}}
\nc{\begr}{\begin{green}}
\nc{\egr}{\end{green}}
\nc{\bepu}{\begin{purple}}
\nc{\epu}{\end{purple}}
\title[Distinguished filtrations]{Distinguished filtrations of the $0$-Hecke modules  
for dual immaculate quasisymmetric functions  
}
\author[S.-Y. Lee]{So-Yeon Lee}
\address[S.-Y. Lee]{Department of Mathematics, Sogang University, Seoul 04107, Republic of Korea}
\email{sylee0814@sogang.ac.kr}
\author[Y.-T. Oh]{Young-Tak Oh}
\address[Y.-T. Oh]{Department of Mathematics, Sogang University, Seoul 04107, Republic of Korea}
\email{ytoh@sogang.ac.kr}
\keywords{$0$-Hecke algebra, Representation, Filtration, Quasisymmetric function, Schur function}
\date{\today}
\subjclass[2020]{20C08, 06A07, 05E10, 05E05}
\begin{document}

\maketitle

\begin{abstract}
Let \( \alpha \) range over the set of compositions. 
Dual immaculate quasisymmetric functions \( \SG_\alpha^* \), introduced by Berg, Bergeron, Saliola, Serrano, and Zabrocki, provide a quasisymmetric analogue of Schur functions. They also constructed an indecomposable \( 0 \)-Hecke module \( \calV_\alpha \) whose image under the quasisymmetric characteristic is \( \SG_\alpha^* \). 
In this paper, we prove that \( \mathcal{V}_\alpha \) admits a distinguished filtration with respect to the basis of Young quasisymmetric Schur functions. 
This result offers a novel representation-theoretic interpretation of  the positive expansion of  \( \SG_\alpha^* \) in the basis of Young quasisymmetric Schur functions.
A key tool in our proof is Mason’s analogue of the Robinson-Schensted-Knuth algorithm, for which we establish a version of Green's theorem. 
As an unexpected byproduct of our investigation, we construct an indecomposable \( 0 \)-Hecke module \( \mathbf{Y}_\alpha \) whose image under the quasisymmetric characteristic is the Young quasisymmetric Schur function \( \hscrS_\alpha \).
Further properties of this module are also investigated.
And, by applying a suitable automorphism twist to this module, we obtain an indecomposable \( 0 \)-Hecke module whose image under the quasisymmetric characteristic is the quasisymmetric Schur function \( \scrS_\alpha \).
\end{abstract}

\tableofcontents

\section{Introduction}

Let \( n \) be a nonnegative integer. 
The \( 0 \)-Hecke algebra \( H_n(0) \) is defined by setting \( q = 0 \) in the generic Hecke algebra \( H_n(q) \). However, the representation theory of \( H_n(0) \) differs significantly from that of the generic Hecke algebra. 
In particular, \( H_n(0) \) is neither semi-simple nor representation-finite for \( n \ge 4 \); it is of tame type when \( n = 4 \) and of wild type for \( n > 4 \) (see~\cite{11DY, 02DHT}).

The representation theory of \(0\)-Hecke algebras has deep connections with quasisymmetric functions. Let \( \calG_0(\Hnmod) \) denote the Grothendieck group of the category \( \Hnmod \) of finitely generated left \( H_n(0) \)-modules and let \( \Qsym \) be the ring of quasisymmetric functions. 
In \cite{96DKLT, 09BL}, it has been shown that the direct sum \( \bigoplus_{n \ge 0} \calG_0(\Hnmod) \), equipped with a Hopf algebra structure defined by the induction product and restriction, is isomorphic to \( \Qsym \) under the map
\[
 \ch : \bigoplus_{n \ge 0} \calG_0(\Hnmod) \to \Qsym, \quad [\bfF_\alpha] \mapsto F_{\alpha},
\] known as \emph{quasisymmetric characteristic}. Here, \( \alpha \) is a composition, \( \bfF_\alpha \) is the irreducible \( H_{|\alpha|}(0) \)-module indexed by \( \alpha \), and \( F_\alpha \) is the fundamental quasisymmetric function indexed by \( \alpha \) (see Section \ref{subsec: 0-Hecke alg and QSym}).

Given an index set \( I \), let \( \mathcal{B} = \{\mathcal{B}_\alpha \mid \alpha \in I\} \) be a linearly independent subset of the \( n \)th homogeneous component \( \Qsym_n \) of \( \Qsym \), where each \( \mathcal{B}_\alpha \) has a positive expansion in the basis \( \{F_\alpha \mid \alpha \models n\} \). 
For an  \( H_n(0) \)-module \( M \), a \emph{distinguished filtration of \( M \) with respect to \( \mathcal{B} \)} is defined to be an \( H_n(0) \)-submodule series  
\[
0 =: M_0 \subsetneq M_1 \subsetneq M_2 \subsetneq \cdots \subsetneq M_l := M,
\]
such that for each \( 1 \leq k \leq l \), there exists \( \alpha \in I \) satisfying \( \ch([M_k / M_{k-1}]) = \mathcal{B}_\alpha \). 
It is important to observe that such a distinguished filtration of \( M \) with respect to \( \mathcal{B} \) may not exist, even if \( \ch([M]) \) admits a positive expansion in terms of \( \mathcal{B} \).
The concept of distinguished filtrations was introduced in \cite[Section 6]{23KLO} in the context of studying \( 0 \)-Hecke modules associated with regular Schur labeled posets. 
Roughly speaking, a regular Schur labeled skew shape poset \( P \) is a poset where the set of linear extensions forms a weak Bruhat interval and the \( P \)-partition generating function is symmetric. 
It was shown therein that for any regular Schur labeled poset \( P \) with underlying set $[n]:=\{1,2,\ldots, n\}$, the associated \( H_n(0)\)-module \( \sfM_P \) admits a distinguished filtration with respect to the Schur basis \( \{s_\lambda \mid \lambda \vdash n\} \).

In this paper, we consider two bases for \(\Qsym_n\): the basis \(\calS := \{\scrS_\alpha \mid \alpha \models n\}\) consisting of quasisymmetric Schur functions, and the basis \(\hcalS := \{\hscrS_\alpha \mid \alpha \models n\}\)  consisting of Young quasisymmetric Schur functions. The quasisymmetric Schur functions are widely viewed as a natural quasisymmetric analogue of the Schur functions (see \cite{11HLMW, 11BLW, 11HLMW2, 13LMvW}).
Meanwhile, the Young quasisymmetric Schur functions are derived from the quasisymmetric Schur functions through the automorphism \(\uprho: \Qsym \to \Qsym\), defined by \(F_\alpha \mapsto F_{\alpha^\rmr}\), where \(\alpha^\rmr\) denotes the reverse of the composition \(\alpha\). 
With these bases, we study distinguished filtrations of the \(H_n(0)\)-modules for the dual immaculate quasisymmetric functions and the extended Schur functions.

The dual immaculate quasisymmetric functions \( \SG_\alpha^* \), introduced by Berg, Bergeron, Saliola, Serrano, and Zabrocki \cite{14BBSSZ}, serve as quasisymmetric analogues of the immaculate noncommutative symmetric functions. The set \( \{\SG_\alpha^* \mid \alpha \models n\} \) forms a basis for \( \Qsym_n \), which has been extensively studied in the context of quasisymmetric functions and the representation theory of \( H_n(0) \) (for instance, see \cite{15BBSSZ, 16BSZ, 17BBSSZ, 22CKNO2, 22CKNO}). 
In particular, in \cite{15BBSSZ}, the authors constructed an indecomposable \( H_n(0) \)-module \( \calV_\alpha \) and showed that its image under the quasisymmetric characteristic  is \( \SG_\alpha^* \).
Meanwhile, Allen, Hallam, and Mason \cite{18AHM} demonstrated that the dual immaculate quasisymmetric functions expand positively in  \( \hcalS \). Therefore, it is natural to ask whether the module \( \calV_\alpha \) admits a distinguished filtration with respect to  \( \hcalS \).

The extended Schur functions \( \calE_\alpha \), introduced by Assaf and Searles \cite{19AS} as stable limits of lock polynomials, also constitute a basis for \( \Qsym \). In \cite[Theorems 3.10 and 3.13]{19Searles}, Searles constructed an indecomposable \( H_n(0) \)-module \( X_\alpha \) and showed that its image under the quasisymmetric characteristic is \( \calE_\alpha \). 
Recently, Marcum and Niese \cite[Theorem 3.5]{24MN} showed that if \( \alpha \) is a composition of \( n \) obtained by shuffling a partition and \( (1^k) \) for some \( k \geq 0 \), then \(\calE_\alpha  \) expands positively in  \( \hcalS \). 
Analogous to the case of \( \calV_\alpha \), this naturally leads to the question of whether \( X_\alpha \) admits a distinguished filtration with respect to  \( \hcalS \).

The primary objective of this paper is to address the aforementioned questions. The main results obtained in this work are outlined below.

In \cref{An analogue of RSK and its Greene Theorem}, we deal with Mason's analogue of the Robinson-Schensted-Knuth algorithm, which is the most important tool in our study.
In \cref{subsec: an analog of RSK}, following \cite[Procedure 4.2]{06Mason}, we introduce an algorithm that, given a two-line array \( w \), produces a pair of Young composition tableaux \( (\hatPtab(w), \hatQtab(w)) \) (\cref{alg: modified mason algotithm}). We then investigate the similarities and differences between this algorithm and the Robinson-Schensted-Knuth algorithm.
In \cref{subsec: the corresponding Greene's theorem}, we present a theorem that determines the shapes of the insertion tableaux \( \hatPtab(w) \) (\cref{thm: an analoge of Greene}).  
In the case of the Robinson-Schensted-Knuth algorithm, this problem was resolved by Greene \cite{74Gre}.  
For our purpose, we define the concept of the \emph{initial entries set} of a longest \( k \)-increasing subsequence of \( w \) (see \cref{def: initial entry set}).

In \cref{Distinguished filtrations of V and X}, we present answers to the aforementioned questions. 
It was shown in \cite[Theorem 4.4]{22JKLO} that both \( \calV_\alpha \) and \( X_\alpha \) are equipped with the structures of weak Bruhat interval modules. Specifically,  
\[
\calV_\alpha \cong \sfB(\sfrow(\calT_\alpha), \sfrow(\calT'_\alpha)) \quad \text{and} \quad X_\alpha \cong \sfB(\sfrow(\sfT_\alpha), \sfrow(\sfT'_\alpha))
\]
(for details, see \cref{Weak Bruhat interval modules}).
In \cref{Distinguished filtrations of V}, we first define an equivalence relation \( \simeq_M \) on \( \SG_n \), which refines the dual Knuth equivalence relation. 
We then show that \( [\sfrow(\calT_\alpha), \sfrow(\calT'_{\alpha})]_L \cdot w_0 \) is closed under \( \simeq_M \), and a suitable order relation exists among the shapes of \( \hatPtab(\sigma) \) for all \( \sigma \in [\sfrow(\calT_\alpha), \sfrow(\calT'_{\alpha})]_L \cdot w_0 \). 
Finally, using these results, we prove that for any composition \( \alpha \) of \( n \), the module \( \calV_\alpha \) admits a distinguished filtration with respect to  \( \hcalS \) (\cref{thm: dist filt for Va}).
In \cref{Distinguished filtrations of X}, we restrict \( \alpha \) to a composition of \( n \) obtained by shuffling a partition and \( (1^k) \) for some \( k \geq 0 \). 
We first show that \( [\sfrow(\sfT_\alpha), \sfrow(\sfT'_{\alpha})]_L \cdot w_0 \) is closed under \( \simeq_M \). 
Then, following the approach used in the proof of \cref{thm: dist filt for Va}, we prove that \( X_\alpha \) admits a distinguished filtration with respect to \( \hcalS \) (\cref{thm: dist filt for Xa}).
It is worth noting that if \(\lambda\) is a partition of \(n\), then \(\ch([X_\lambda]) = s_\lambda\). However, \(X_\lambda\) may not necessarily admit a distinguished filtration with respect to \(\calS\)
(see \cref{no filtration example} and \cref{fig: appendix example}).

\cref{Indecomposable modules for (Young) quasisymmetric Schur functions} concerns a significant open question in the representation theory of \(0\)-Hecke algebras: whether there exists an indecomposable $H_n(0)$-module \(M\) whose image under the quasisymmetric characteristic equals \(\scrS_{\alpha}\) or \(\hscrS_{\alpha}\). In 2015, Tewari and van Willigenburg \cite{15TW} constructed an \(H_n(0)\)-module \(\bfS_\alpha\) that satisfies \(\ch([\bfS_\alpha]) = \scrS_{\alpha}\). 
However, \(\bfS_\alpha\) is not generally indecomposable.
In \cref{subsec: indecomp mod for YSal and Sal}, we provide an indecomposable \( H_n(0) \)-module \( \bfY_\alpha \) whose image under the quasisymmetric characteristic  is \( \hscrS_{\alpha} \) (\cref{thm: new modules for quasisymmetric Schur and Young quasi Schur}).  
To be precise, for any distinguished filtration  
\[
0 = M_0 \subsetneq M_1 \subsetneq \cdots \subsetneq  M_{l-1} \subsetneq M_l = \sfB(\sfrow(\calT_\alpha), \sfrow(\calT'_\alpha))
\]  
of \( \sfB(\sfrow(\calT_\alpha), \sfrow(\calT'_\alpha)) \), as constructed in the proof of \cref{thm: dist filt for Va},
we define \( \bfY_\alpha \) as the quotient 
$M_l/M_{l-1}$. 
It turns out that this module is independent of the choice of the filtration. 
We then provide a basis \( \calK_\alpha \subseteq [\sfrow(\calT_\alpha), \sfrow(\calT'_\alpha)]_L \) for \( \bfY_\alpha \) (see \eqref{eq: def of K alpha}).
In \cref{subsec: A surjection series containing Yal}, we construct a surjection series  
\[
\overline{\bfP}_{\alpha^{\rmc}} \stackrel{}{\twoheadrightarrow} \calV_\alpha \twoheadrightarrow \bfY_\alpha \twoheadrightarrow \hbfS_{\alpha, C}
\]  
(\cref{prop: existence of Gamma map}).  
Additionally, we show that this series can be extended when \( \alpha \) is obtained by shuffling a partition with \( (1^k) \) for some \( k \geq 0 \):  
\[
\overline{\bfP}_{\alpha^{\rmc}} \twoheadrightarrow \calV_\alpha \twoheadrightarrow X_\alpha \twoheadrightarrow \bfY_\alpha \twoheadrightarrow \hbfS_{\alpha, C}
\]  
(\cref{prop: exitence of tildel map}).  
In \cref{subsec: Weak Bruhat interval module structure for Yal}, we show that \(\calK_\alpha\) is a weak Bruhat interval when \( \alpha \) is a composition obtained by shuffling a partition and \( (1^k) \) for some \( k \geq 0 \).  
To achieve this, we first introduce the concept of \emph{southeast decreasing fillings} (see \cref{def: SE-decreasing}).  
Using this concept, we characterize  the elements in $\calK_\alpha$. 
We then construct a specific filling \( \tau'_\alpha \) and show that  
$\calK_\alpha = [\sfrow(\sfT_\alpha), \sfrow(\tau'_\alpha)]_L$
(\cref{prop: description for Gamma wrt nSYCT} and \cref{second main result of sec 5.3}).

\section{Preliminaries}

For integers $m$ and $n$, we define $[m,n]$ and $[n]$ to be the intervals $\{t\in \mathbb Z \mid m\le t \le n\}$ and $\{t\in \mathbb Z \mid 1\le t \le n\}$, respectively.
Throughout this paper, $n$ will denote a nonnegative integer unless otherwise stated.

\subsection{Compositions and their diagrams}\label{subsec: comp and diag}

A \emph{composition} $\alpha$ of $n$, denoted by $\alpha \models n$, is a finite ordered list of positive integers $(\alpha_1,\alpha_2,\ldots, \alpha_k)$ satisfying $\sum_{i=1}^k \alpha_i = n$.
We call $\alpha_i$ ($1 \le i \le k$) a \emph{part} of $\alpha$, $k =: \ell(\alpha)$ the \emph{length} of $\alpha$, and $n =:|\alpha|$ the \emph{size} of $\alpha$. 
And, we define the empty composition $\varnothing$ to be the unique composition of size and length $0$.
Whenever necessary, we set $\alpha_i = 0$ for all $i > \ell(\alpha)$.
We represent each composition visually using its composition diagram.
The \emph{composition diagram} $\tcd(\alpha)$ of $\alpha$ is the array of left-justified cells, with 
$\alpha_i$ cells in the $i$th row from the bottom for $1 \leq i \leq k$. 
If $\alpha=(3,1,2)$, then 
\[
\tcd(\alpha) =
\begin{array}{c}
\begin{ytableau}
    {\color{white}1} & \text{}\\
    {\color{white}1}\\
    {\color{white}1} & {\color{white}1} & \text{}
\end{ytableau}
\end{array}.
\]
Each cell is described by its row and column indices, with the coordinate $(i,j)$ denoting the 
cell in the 
$i$th row from  the bottom  and the 
$j$th column from the left. 
We often use the set of coordinates of the cells in $\tcd(\alpha)$ to denote $\tcd(\alpha)$ itself.
For example, 
$$\tcd((3,1,2))=\{(1,1),(1,2),(1,3),(2,1),(3,1), (3,2)\}.$$

For a composition $\alpha = (\alpha_1,\alpha_2,\ldots,\alpha_k) \models n$ and a set $I = \{i_1 < i_2 < \cdots < i_{l}\} \subseteq [n-1]$, 
let 
\begin{align*}
    \set(\alpha) &:= \{\alpha_1,\alpha_1+\alpha_2,\ldots, \alpha_1 + \alpha_2 + \cdots + \alpha_{k-1}\}, \text{ and} \\\comp(I) &:= (i_1,i_2 - i_1, i_3 - i_2, \ldots,n-i_{l}).
\end{align*}
The set of all compositions of $n$ is in bijection with the set of all subsets of $[n-1]$ under the correspondence $\alpha \mapsto \set(\alpha)$ (or $I \mapsto \comp(I)$).
The \emph{reverse composition $\alpha^\rmr$ of $\alpha$} is defined to be the composition $(\alpha_k, \alpha_{k-1}, \ldots, \alpha_1)$ and
the \emph{complement $\alpha^\rmc$ of $\alpha$} is defined to be the unique composition satisfying $\set(\alpha^c) = [n-1] \setminus \set(\alpha)$.

If a composition $\lambda = (\lambda_1, \lambda_2, \ldots, \lambda_k) \models n$ satisfies $\lambda_1 \ge \lambda_2 \ge \cdots \ge \lambda_k$, then it is called a \emph{partition} of $n$ and denoted as $\lambda \vdash n$.
Given two partitions $\lambda$ and $\mu$ with $\ell(\lambda) \ge \ell(\mu)$, we write $\lambda \supseteq \mu$ if $\lambda_i \geq  \mu_i$ for all $1 \le i \le \ell(\mu)$. 
A \emph{skew partition} $\lambda/\mu$ is a pair  $(\lambda, \mu)$ of partitions with $\lambda \supseteq \mu$.
We call $|\lambda/\mu| := |\lambda| - |\mu|$ the \emph{size} of $\lambda/\mu$.
When $\lambda$ is a partition, we typically use $\tyd(\lambda)$ to denote $\tcd(\lambda)$ and refer to it as the \emph{Young diagram} of $\lambda$ (in French notation). For a skew partition $\lambda/\mu$, the \emph{Young diagram $\tyd(\lambda/\mu)$ of $\lambda/\mu$} is defined as the Young diagram $\tyd(\lambda)$ with the cells corresponding to $\tyd(\mu)$ removed. 
We denote by $\SYT(\lambda / \mu)$ the set of all standard Young tableaux of shape $\lambda / \mu$.
And, we let $\SYT_n := \bigsqcup_{\lambda \vdash n} \SYT(\lambda)$.
Given a composition $\alpha$, let $\lambda(\alpha)$ be the partition obtained by rearranging the entries of $\alpha$ in decreasing order.

\textbf{Convention.}
Let $D$ be a composition diagram or a  Young diagram of skew shape.
In this paper, we regard a filling $T$ of $D$ with positive integers as a map 
$$T: D \to \Z_{>0}, \quad (i,j) \mapsto T(i,j),$$
where $T(i,j)$ denotes the entry in the cell $(i,j)$ of $T$.
Given a filling $T$ of $D$, we primarily utilize two reading words, 
$\sfrow(T)$ and $\sfcol(T)$, defined as follows: 
\begin{itemize}
    \item $\sfrow(T)$ is the word obtained by reading the entries of $T$ from right to left starting with the bottom-most row.
    \item $\sfcol(T)$ is the word obtained by reading the entries of $T$ from bottom to top starting with the rightmost column.
\end{itemize}
Unless otherwise stated, the \(i\)th row of \(T\) refers to the \(i\)th row from the bottom and the \(j\)th column of \(T\) refers to the \(j\)th column from the left.

\subsection{The left weak Bruhat order on the symmetric group}\label{subsec: Symmetric group}

Let $\SG_n$ denote the symmetric group on $[n]$. Every permutation $\sigma \in \SG_n$ can be expressed as a product of simple transpositions $s_i := (i,i+1)$ for $1 \leq i \leq n-1$.
A \emph{reduced expression for $\sigma$} is an expression that represents $\sigma$ in the shortest possible length and 
the \emph{length $\ell(\sigma)$ of $\sigma$} is the number of simple transpositions in any reduced expression for $\sigma$.
The \emph{left weak Bruhat order} $\preceq_L$  on $\SG_n$
is the partial order on $\SG_n$ whose covering relation $\preceq_L^{\rmc}$ is given as follows: 
\[
\sigma \preceq_L^\rmc s_i \sigma \;\text{ if }\ell(\sigma)<\ell(s_i \sigma).
\]
Let
\[
\Des_L(\sigma):= \{1 \leq i \leq n-1 \mid \ell(s_i \sigma) < \ell(\sigma)\}.
\]
It is well known that if $\sigma = w_1 w_2 \ldots w_n$ in one-line notation, then
\begin{align*}
\begin{aligned}
\Des_L(\sigma) & = \{ 1 \leq i \leq n-1 \mid \text{$i$ is right of $i+1$ in $w_1 w_2 \ldots w_n$} \}.
\end{aligned} 
\end{align*}
For each $\gamma \in \SG_n$, let
\begin{align*}
\rmInv_L(\gamma) & := \{(i,j) \mid 1 \le i < j \le n \text{ and } \gamma(i) > \gamma(j) \}.
\end{align*}
Then, for $\sigma, \rho \in \SG_n$, 
\begin{align*}
& \sigma \preceq_L \rho 
\quad \text{if and only if} \quad
\rmInv_L(\sigma) \subseteq \rmInv_L(\rho).
\end{align*}
Given $\sigma, \rho \in \SG_n$, the \emph{left weak Bruhat interval $[\sigma, \rho]_L$}  denotes the closed interval $\{\gamma \in \SG_n \mid \sigma \preceq_L \gamma \preceq_L \rho \}$ with respect to the left weak Bruhat order $\preceq_L$.

We collect notations associated with the symmetric group that will be used throughout the paper. For any subset $I \subseteq [n-1]$, let $\SG_I$ denote the parabolic subgroup of $\SG_n$ generated by $\{s_i \mid i \in I\}$ and let $w_0(I)$ denote the longest element in $\SG_I$. 
When $I = [n-1]$, we simply write $w_0$ to denote the longest element in $\SG_n$. 
For $\alpha \models n$, we define 
\[
w_0(\alpha) := w_0(\text{set}(\alpha)).
\]
Additionally, for any $\sigma \in \SG_n$, we define $\sigma^{w_0} := w_0 \sigma w_0$. Finally, for $S \subseteq \SG_n$ and $\xi \in \SG_n$, we define 
\[
S \cdot \xi := \{\gamma \xi \mid \gamma \in S\}
\quad \text{and} \quad 
\xi \cdot S := \{\xi \gamma \mid \gamma \in S\}.
\]

\subsection{The $0$-Hecke algebra and its representation theory}\label{subsec: 0-Hecke alg and QSym}
The $0$-Hecke algebra $H_n(0)$ is the associative $\C$-algebra with $1$ generated by $\pi_1,\pi_2,\ldots,\pi_{n-1}$ subject to the following relations:
\begin{align*}
\pi_i^2 &= \pi_i \quad \text{for $1\le i \le n-1$},\\
\pi_i \pi_{i+1} \pi_i &= \pi_{i+1} \pi_i \pi_{i+1}  \quad \text{for $1\le i \le n-2$},\\
\pi_i \pi_j &=\pi_j \pi_i \quad \text{if $|i-j| \ge 2$}.
\end{align*}
For any reduced expression $s_{i_1} s_{i_2} \cdots s_{i_p}$ for $\sigma \in \SG_n$, let 
$\pi_{\sigma} := \pi_{i_1} \pi_{i_2 } \cdots \pi_{i_p}$. 
It is well known that these elements are independent of the choices of reduced expressions and  $\{\pi_\sigma \mid \sigma \in \SG_n\}$ 
is a $\mathbb C$-bases for $H_n(0)$.

Let us briefly review the representation theory of $0$-Hecke algebras. Unless stated otherwise, all $H_n(0)$-modules are finitely generated and carry a left action.
By \cite{79Norton}, there are $2^{n-1}$ pairwise nonisomorphic irreducible $H_n(0)$-modules, naturally indexed by compositions of $n$.
To be precise, for each $\alpha \models n$, 
there exists an irreducible $H_n(0)$-module $\bfF_{\alpha}:=\C v_{\alpha}$ endowed with the $H_n(0)$-action defined as follows: for each $1 \le i \le n-1$,
\[
\pi_i \cdot v_\alpha = \begin{cases}
0 & i \in \set(\alpha),\\
v_\alpha & i \notin \set(\alpha).
\end{cases}
\]

Let $\Hnmod$ be the category of finitely generated left $H_n(0)$-modules and $\calR(H_n(0))$ the $\Z$-span of the set of (representatives of) isomorphism classes of modules in $\Hnmod$.
We denote by $[M]$ the isomorphism class corresponding to an $H_n(0)$-module $M$. 
The \emph{Grothendieck group $\calG_0(H_n(0))$ of $\Hnmod$} is the quotient of $\calR(H_n(0))$ modulo the relations $[M] = [M'] + [M'']$ whenever there exists a short exact sequence $0 \ra M' \ra M \ra M'' \ra 0$. 
The equivalence classes of the irreducible $H_n(0)$-modules form a $\Z$-basis for $\calG_0(H_n(0))$. Let
\[
\calG := \bigoplus_{n \ge 0} \calG_0(H_n(0)).
\]

Let us review the connection between $\calG$ and the ring $\Qsym$ of quasisymmetric functions.
For the definition of quasisymmetric functions, see~\cite[Section 7.19]{99Stanley}.
For a composition $\alpha$, the \emph{fundamental quasisymmetric function} $F_\alpha$, which was firstly introduced in~\cite{84Gessel}, is defined by
\[
F_\varnothing = 1 
\quad \text{and} \quad
F_\alpha = \sum_{\substack{1 \le i_1 \le i_2 \le \cdots \le i_n \\ i_j < i_{j+1} \text{ if } j \in \set(\alpha)}} x_{i_1}x_{i_2} \cdots x_{i_n} \quad \text{if $\alpha \neq \varnothing$}.
\]
It is known that $\{F_\alpha \mid \text{$\alpha$ is a composition}\}$ is a $\mathbb Z$-basis for $\Qsym$.
When $M$ is an $H_m(0)$-module and $N$ is
an $H_n(0)$-module, we write $M \boxtimes N$ for the induction product of $M$ and $N$, that is,
\begin{align*}
M \boxtimes N := M \otimes N \uparrow_{H_m(0) \otimes H_n(0)}^{H_{m+n}(0)}.
\end{align*}
Here, $H_m(0) \otimes H_n(0)$ is viewed as the subalgebra of $H_{m+n}(0)$ generated by $\{\pi_i \mid i \in [m+n-1] \setminus \{m\} \}$.
The induction product induces a multiplication on $\calG$.
It was shown in \cite{96DKLT} that the linear map
\begin{equation*}
\ch : \calG \ra \Qsym, \quad [\bfF_{\alpha}] \mapsto F_{\alpha},
\end{equation*}
called \emph{quasisymmetric characteristic}, is a ring isomorphism.

\subsection{Bases for $\Qsym$ and related $0$-Hecke modules}
\label{Bases for Qsym and related 0 Hecke modules}
In this paper, we deal with the following quasi-analogues of Schur functions: quasisymmetric Schur functions, Young quasisymmetric Schur functions, dual immaculate quasisymmetric functions, and extended Schur functions. We provide a brief introduction to each of these functions. Furthermore, for the latter two, we introduce indecomposable \(0\)-Hecke modules that have these functions as images under the quasisymmetric characteristic.

\subsubsection{Quasisymmetric Schur functions, Young quasisymmetric Schur functions, and related $0$-Hecke modules }\label{subsec: quasi Schur ftn}

Quasisymmetric Schur functions were first introduced in \cite[Definition 5.1]{11HLMW} and arise from the specialization of nonsymmetric Macdonald polynomials to standard bases, commonly known as Demazure atoms. Moreover, their \( F \)-expansions, where \( F \) denotes the basis of fundamental quasisymmetric functions, are also provided. Here, we introduce this expansion.

\begin{definition}
Let $\alpha$ be a composition of $n$. 
A \emph{composition tableau} of shape $\alpha$ is a filling $T$ of $\tcd(\alpha)$ with entries in $\Z_{>0}$  satisfying the following conditions:
\begin{enumerate}
  \item The entries in each row of $T$ weakly decrease  from left to right.
  \item  The entries in the leftmost column of $T$ strictly increase  from top to bottom.
  \item Triple rule: for any cells $(i, k+1), (j, k) \in \tcd(\alpha)$ with $i < j$,  
  \[
  \text{if $T(i, k+1) \leq T(j, k)$,  then
  $(j, k+1) \in \tcd(\alpha)$ and 
  $T(i, k+1) < T(j, k+1)$.}
  \]
\end{enumerate}
\end{definition}

Let \(\alpha \models n\). A \emph{standard composition tableau} (SCT) of shape \(\alpha\) is a composition tableau in which each number in $[n]$ appears exactly once.  
Let \(\SCT(\alpha)\) denote the set of all standard composition tableaux of shape \(\alpha\). For each \(T \in \SCT(\alpha)\), define  
\[
\Des_{\scrS}(T) := \{1 \leq i \leq n-1 \mid \text{\(i+1\) appears weakly to the right of \(i\) in \(T\)}\}.
\]  
It was shown in \cite[Section 5]{11HLMW} that  
\[
\scrS_\alpha = \sum_{T \in \SCT(\alpha^\rmr)} F_{\comp(\Des_{\scrS}(T))}
\]  
and \(\{\scrS_\alpha \mid \alpha \models n\}\) forms a basis for \(\Qsym_n\).
 
In 2015, Tewari and van Willigenberg \cite{15TW} introduced an \(H_n(0)\)-action on the \(\C\)-span of \(\SCT(\alpha^\rmr)\). The resulting \(H_n(0)\)-module, denoted by \(\bfS_\alpha\), satisfies  
\[
\ch([\bfS_\alpha]) = \scrS_\alpha.
\]
This module is not indecomposable in general.  
For its decomposition into indecomposable modules, see \cite{19Konig, 20CKNO}.
Define  
\[
\SCT(\alpha^\rmr; C) := \{T \in \SCT(\alpha^\rmr) \mid \text{each column of \(T\) increases from top to bottom}\}.
\]  
The \(\C\)-span of \(\SCT(\alpha^\rmr; C)\) forms an indecompsable $H_n(0)$-submodule of \(\bfS_\alpha\), which is denoted by \(\bfS_{\alpha, C}\) and referred to as the \emph{canonical submodule} of \(\bfS_\alpha\).

On the other hand, the \emph{Young quasisymmetric Schur function} $\hscrS_\alpha$ is defined as  $\uprho(\scrS_{\alpha^\rmr})$, where $\uprho: \Qsym \to \Qsym$ is the automorphism defined by $F_\alpha \mapsto F_{\alpha^\rmr}$. 
These functions exhibit many properties analogous to those of quasisymmetric Schur functions.  
Additionally, the Young quasisymmetric function $\hscrS_\alpha$ can be expressed using the combinatorial object known as Young composition tableaux.

\begin{definition}
Let $\alpha$ be a composition of $n$. 
A {\it Young composition tableau} of shape $\alpha$ is a filling $T$ of $\tcd(\alpha)$ with entries in $\Z_{>0}$  satisfying the following conditions:
\begin{enumerate}
    \item  The entries in each row weakly increase from left to right,
    \item The entries in the first column  strictly increase from bottom to top, and
    \item Young triple rule:
    for any cells $(i, k+1), (j, k) \in \tcd(\alpha)$ with $ i< j$,
    \[
    \text{if  $T(i, k+1) \geq T(j, k)$, then 
    $(j, k+1) \in \tcd(\alpha)$ and 
    $T(i, k+1) > T(j, k+1)$.}
    \]
    
\end{enumerate}
\end{definition}
A \emph{standard Young composition tableau} of shape $\alpha$ is a Young composition tableau of shape $\alpha$ in which each of the numbers in $[n]$ appears exactly once.
We denote the set of all standard Young composition tableaux of shape $\alpha$ by $\SYCT(\alpha)$.
For each \(T \in \SYCT(\alpha)\), we define 
\begin{equation}\label{eq: descent of Young comoposition tableau}
\Des_{\hscrS}(T) := \{1 \leq i \leq n-1 \mid \text{$i$ appears weakly right of $i+1$ in $T$} \}.
\end{equation}
It was shown in \cite[Proposition 5.2.2]{13LMvW} that 
\[
\hscrS_{\alpha} = \sum_{T \in \SYCT(\alpha)} F_{\comp(\Des_{\hscrS}(T))}
\]
(see \cite[Section 4 and Section 5]{13LMvW}).

In 2022, Choi, Kim, Nam, and Oh  introduced an \(H_n(0)\)-action on the \(\C\)-span of \(\SYCT(\alpha)\). The resulting \(H_n(0)\)-module, denoted by \(\hbfS_\alpha\), satisfies that  
$\ch([\hbfS_\alpha]) = \hscrS_\alpha$. 
This module is also not indecomposable in general.
Define  
\[
\SYCT(\alpha; C) := \{T \in \SYCT(\alpha) \mid \text{each column of \(T\) increases from bottom to top}\}.
\]  
The \(\C\)-span of \(\SYCT(\alpha; C)\) forms an indecomposable submodule of \(\hbfS_\alpha\), denoted by \(\hbfS_{\alpha, C}\), which is referred to as the \emph{canonical submodule} of \(\hbfS_\alpha\) (see \cite[Subsection 4.2]{22CKNO}).

\subsubsection{
Dual immaculate quasisymmetric functions and related $0$-Hecke modules}

For each composition $\alpha$, the dual immaculate quasisymmetric function \(\SG_\alpha^*\) was initially introduced as the quasisymmetric dual of the corresponding immaculate noncommutative symmetric function in \cite[Section 3.7]{14BBSSZ}. 
Similar to \(\scrS_\alpha\) and \(\hscrS_\alpha\), it has a positive \(F\)-expansion, which can be expressed in terms of tableaux.

\begin{definition}(\cite[Definition 3.9]{14BBSSZ})\label{def: SIT}
Let $\alpha$ be a composition of $n$. 
A \emph{standard immaculate tableau} of shape $\alpha$ is a filling $\calT$ of $\tcd(\alpha)$ with the  entries $1, 2, \ldots, n$ such that the entries are all distinct, the entries in each row increase from left to right, and the entries in the first column increase  from bottom to top. 
\end{definition}
Let $\SIT(\alpha)$ denote the set of all standard immaculate tableaux of shape $\alpha$. 
It was shown in \cite[Proposition 3.48]{14BBSSZ} that 
\[
\SG_\alpha^* = \sum_{T \in \SIT(\alpha)} F_{\comp(\Des_L(\sfrow(T)))^\rmc}.
\]
Then, Berg {\it et al.} \cite{15BBSSZ} constructed an \(H_n(0)\)-module structure for the dual immaculate quasisymmetric function \(\SG_\alpha^*\).
Define an $H_n(0)$-action on the $\C$-span of $\SIT(\alpha)$ by
\begin{align*}
    \pi_i \cdot \calT = \begin{cases}
      \calT  & \text{if $i$ appears weakly above of $i+1$ in $\calT$,}\\
      0  & \text{if $i$ and $i+1$ are in the first column of $\calT$,}\\
      s_i \cdot \calT  & \text{otherwise}
    \end{cases}
\end{align*}
for $1 \leq i \leq n-1$ and $\calT \in \SIT(\alpha)$.
Here,  $s_i \cdot \calT$ is obtained from $\calT$ by swapping $i$ and $i+1$.
The resulting module is denoted by $\calV_\alpha$.

\begin{theorem}{\rm (\cite[Theorem 3.5]{15BBSSZ})}
For a composition $\alpha$ of $n$, $\calV_\alpha$ is an indecomposable $H_n(0)$-module 
whose image under quasisymmetric characteristic is  $\SG_\alpha^*$.
\end{theorem}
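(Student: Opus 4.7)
The plan is to establish three claims in sequence: that the given formulas define a valid $H_n(0)$-action, that the resulting module has quasisymmetric characteristic $\SG_\alpha^*$, and finally that it is indecomposable.

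First, I would verify that the operators $\pi_i$ satisfy the defining relations of $H_n(0)$ by a case analysis on $\calT \in \SIT(\alpha)$. The idempotency $\pi_i^2 = \pi_i$ follows immediately in each of the three branches of the definition, since if $\pi_i \cdot \calT \in \{\calT, 0\}$ we are done, while if $\pi_i \cdot \calT = s_i \cdot \calT$ then $i+1$ now sits weakly above $i$, so applying $\pi_i$ again returns $s_i \cdot \calT$. For the commutation relation $\pi_i \pi_j = \pi_j \pi_i$ with $|i-j| \geq 2$, observe that the swap of $\{i, i+1\}$ and the swap of $\{j, j+1\}$ act on disjoint cells, so the two operators obviously commute. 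The braid relation $\pi_i \pi_{i+1} \pi_i = \pi_{i+1} \pi_i \pi_{i+1}$ requires enumerating the relative positions of $i, i+1, i+2$ in $\calT$, with particular attention to the subcases where two of these entries lie in the first column (where the zero-rule fires). A finite-case check of the row configurations shows both sides agree.

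Next, to compute $\ch([\calV_\alpha])$ I would build an $H_n(0)$-submodule filtration of $\calV_\alpha$ with one-dimensional irreducible quotients. Fix a total order $\preceq$ on $\SIT(\alpha)$ refining the partial order generated by $\calT' \prec \calT$ whenever $\calT'$ is obtained from $\calT$ by a single swap $s_i$ that arises in the third branch of the action (so entries move closer to a row-reading configuration). The action is then upper-triangular: for each $i$ and each $\calT$,
\[
\pi_i \cdot \calT \in \{\calT, 0\} \cup \Span_\C\{\calT' \in \SIT(\alpha) \mid \calT' \prec \calT\}.
\]
Hence if $\calT_{(1)} \prec \calT_{(2)} \prec \cdots \prec \calT_{(N)}$ enumerates $\SIT(\alpha)$, the subspace $M_k := \Span_\C\{\calT_{(1)}, \ldots, \calT_{(k)}\}$ is an $H_n(0)$-submodule, and the one-dimensional quotient $M_k/M_{k-1}$ carries the action $\pi_i \cdot \overline{\calT_{(k)}} = \overline{\calT_{(k)}}$ if $i \notin \Des_L(\sfrow(\calT_{(k)}))$ and $= 0$ otherwise. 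Comparing with the definition of $\bfF_\beta$, this quotient is isomorphic to $\bfF_{\comp(\Des_L(\sfrow(\calT_{(k)}))^{\mathrm{c}})}$; summing and applying $\ch$ gives
\[
\ch([\calV_\alpha]) = \sum_{\calT \in \SIT(\alpha)} F_{\comp(\Des_L(\sfrow(\calT)))^{\mathrm{c}}} = \SG_\alpha^*,
\]
the last identity being the combinatorial formula cited before the theorem.

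Finally, to prove indecomposability I would exhibit a distinguished generator $\calT_\alpha \in \SIT(\alpha)$ — the tableau with first column $1, 2, \ldots, \ell(\alpha)$ from bottom to top and rows filled with consecutive integers thereafter — and argue that $H_n(0) \cdot \calT_\alpha = \calV_\alpha$. Because $\calT_\alpha$ is the unique maximal element of the order $\preceq$ and every $\calT \in \SIT(\alpha)$ can be reached from $\calT_\alpha$ by iteratively applying the $s_i$-branch of the action (tracked by inversions of $\sfrow(\calT)$ relative to $\sfrow(\calT_\alpha)$), the cyclic submodule generated by $\calT_\alpha$ contains every basis element. Next, given any $\varphi \in \Endo_{H_n(0)}(\calV_\alpha)$, write $\varphi(\calT_\alpha) = \sum_\calT c_\calT \calT$. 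Applying $\pi_i$ for $i \in \Des_L(\sfrow(\calT_\alpha))^{\mathrm{c}}$ (these fix $\calT_\alpha$) and comparing coefficients via the triangularity of the action forces $c_\calT = 0$ whenever $\Des_L(\sfrow(\calT))^{\mathrm{c}} \not\supseteq \Des_L(\sfrow(\calT_\alpha))^{\mathrm{c}}$, and only the leading coefficient $c_{\calT_\alpha}$ survives as a free scalar. Consequently $\Endo_{H_n(0)}(\calV_\alpha) = \C \cdot \id \oplus \mathfrak{n}$, with $\mathfrak{n}$ the nilpotent ideal of endomorphisms sending $\calT_\alpha$ into $\Span\{\calT \mid \calT \prec \calT_\alpha\}$; local endomorphism rings imply indecomposability.

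The main obstacle is the indecomposability step: controlling $\Endo_{H_n(0)}(\calV_\alpha)$ requires a careful choice of the triangular order and bookkeeping about how descent-composition constraints propagate under $\pi_i$. A cleaner route would be to identify $\calV_\alpha$ with the weak Bruhat interval module $\sfB(\sfrow(\calT_\alpha), \sfrow(\calT'_\alpha))$ mentioned in the introduction and appeal to the general fact that such modules are indecomposable, but this costs an extra identification lemma that itself requires matching generators and relations.
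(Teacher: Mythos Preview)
The paper cites this theorem from \cite{15BBSSZ} without proof, so your proposal should be compared with the argument there; your three-step outline (verify relations, filter for the characteristic, analyse the endomorphism ring) is the standard route and matches that source.

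Two systematic slips need correcting. First, the fixing condition is backwards throughout: $\pi_i \cdot \calT = \calT$ exactly when $i$ is weakly above $i+1$ in $\calT$, which is equivalent to $i \in \Des_L(\sfrow(\calT))$, not its complement. (Your final formula for $\ch([\calV_\alpha])$ is nonetheless correct, so this is a slip in the exposition rather than in the answer.) Second, your description of the cyclic generator is wrong: the generator is the tableau $\calT_\alpha$ filled $1,2,\ldots,n$ left-to-right from the bottom row up, so its first column is $1,\alpha_1+1,\alpha_1+\alpha_2+1,\ldots$, not $1,2,\ldots,\ell(\alpha)$. Its left descent set is $[n-1]\setminus\set(\alpha)$.

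The genuine gap is in the indecomposability step. Your assertion that the descent constraints ``force $c_\calT=0$'' is where all the content lies, and you have not supplied it. What is actually needed is the lemma that $\calT_\alpha$ is the \emph{unique} element of $\SIT(\alpha)$ whose left descent set contains $[n-1]\setminus\set(\alpha)$, together with the observation that for any $\calT$ and any $i$ with $i$ strictly below $i+1$ in $\calT$, no $\calT'$ satisfies $\pi_i\cdot\calT'=\calT$. These combine to show that the common fixed space $\{v\in\calV_\alpha: \pi_i v=v \text{ for all } i\in[n-1]\setminus\set(\alpha)\}$ is exactly $\C\calT_\alpha$; hence every endomorphism sends $\calT_\alpha$ into $\C\calT_\alpha$, and since $\calT_\alpha$ generates, $\Endo_{H_n(0)}(\calV_\alpha)=\C$. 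This is stronger and cleaner than the local-ring-with-nilpotents picture you sketch. Finally, your suggested alternative via weak Bruhat interval modules does not shortcut the argument: such modules are not indecomposable in general.
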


\subsubsection{Extended quasisymmetric Schur functions and related $0$-Hecke modules}
The extended Schur functions \(\calE_\alpha\) were introduced by Assaf and Searles in \cite{19AS} as stable limits of lock polynomials. 
They form a basis for \(\Qsym\), and their \(F\)-expansions can be expressed in terms of standard extended tableaux.

\begin{definition}(\cite[Definition 6.18]{19AS})
Let $\alpha$ be a composition of $n$.
A \emph{standard extended tableau} of shape $\alpha$ is a filling $\sfT$ of $\tcd(\alpha)$  with the entries $1, 2, \ldots, n$ such that the entries are all distinct, the entries in each row increase from left to right, and the entries in each column increase  from bottom to top.
\end{definition}
We denote the set of all standard extended tableaux of shape $\alpha$ by $\SET(\alpha)$.
It was shown in \cite[Theorem 6.19]{19AS} that 
\begin{align*}
    \calE_\alpha = \sum_{T \in \SET(\alpha)} F_{\comp(\Des_L(\sfrow(T)))^\rmc}.
\end{align*}
Subsequently, Searles \cite{19Searles} constructed an $H_n(0)$-module for the extended Schur functions $\calE_\alpha$.
Define an $H_n(0)$-action on the $\C$-span of $\SET(\alpha)$ by
\begin{align*}
    \pi_i \cdot T = \begin{cases}
      T  & \text{if $i$ is strictly left of $i+1$ in $T$,}\\
      0  & \text{if $i$ and $i+1$ are in the same column of $T$,}\\
      s_i \cdot T  & \text{if $i$ is strictly right of $i+1$ in $T$}
    \end{cases}
\end{align*}
for $1 \leq i \leq n-1$ and $T \in \SET(\alpha)$.
Here,  $s_i \cdot T$ is obtained from $T$ by swapping $i$ and $i+1$.
The resulting module is denoted by $X_\alpha$.

\begin{theorem}{\rm (\cite[Theorem 3.10, 3.13]{19Searles})}
For $\alpha \models n$, $X_\alpha$ is an indecomposable $H_n(0)$-module 
whose image under quasisymmetric characteristic is $\calE_\alpha$.
\end{theorem}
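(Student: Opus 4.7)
The plan is to verify three claims separately: (a) the prescribed operators define a well-defined left $H_n(0)$-action on $\C\SET(\alpha)$; (b) $\ch([X_\alpha]) = \calE_\alpha$; and (c) $X_\alpha$ is indecomposable.

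For (a), I would first confirm that $s_i \cdot T \in \SET(\alpha)$ whenever $i$ is strictly right of $i+1$ in $T$: since $i$ and $i+1$ lie in different columns (by hypothesis of the third case) and in different rows (two consecutive integers in the same row of an SET must appear in the order $i,i+1$ left-to-right), swapping them preserves both the row- and column-increase conditions. The quadratic relation $\pi_i^2 = \pi_i$ is immediate (in the third case, after one application the configuration of $\{i,i+1\}$ falls into the first case), and the commutation $\pi_i \pi_j = \pi_j \pi_i$ for $|i-j| \ge 2$ is clear since the pairs of involved entries are disjoint. The main technical obstacle is the braid relation $\pi_i \pi_{i+1} \pi_i = \pi_{i+1} \pi_i \pi_{i+1}$, which requires enumerating the relative column positions of $i,i+1,i+2$ (and in some sub-cases their relative rows) and verifying that both sides produce the same element of $\C\SET(\alpha)$.

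For (b), I would construct an $H_n(0)$-composition series of $X_\alpha$ indexed by $\SET(\alpha)$. Fix any linear extension $T^{(1)} \prec T^{(2)} \prec \cdots \prec T^{(N)}$ of the partial order on $\SET(\alpha)$ generated by the covers $s_i \cdot T \prec T$ whenever $i$ is strictly right of $i+1$ in $T$; this order is well-founded because the number of inversions of the length-$n$ word $(c_1, c_2, \ldots, c_n)$ of column indices of $1,2,\ldots,n$ strictly decreases under each such cover. Setting $M_k := \Span_\C \{T^{(j)} \mid j \le k\}$, each $M_k$ is $H_n(0)$-stable, and on the one-dimensional quotient $M_k/M_{k-1}$ the generator $\pi_i$ acts by the identity when $c_i < c_{i+1}$ and by zero otherwise (either $\pi_i T^{(k)} = 0$ when the columns coincide, or $\pi_i T^{(k)} = s_i \cdot T^{(k)} \in M_{k-1}$). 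A short geometric argument exploiting the left-justified shape of $\tcd(\alpha)$ together with the row/column-increase conditions shows that, in any SET, $c_i < c_{i+1}$ is equivalent to $i$ lying strictly right of $i+1$ in $\sfrow(T^{(k)})$, i.e., to $i \in \Des_L(\sfrow(T^{(k)}))$. Comparing with the definition of $\bfF_\beta$ identifies $M_k/M_{k-1} \cong \bfF_{\comp(\Des_L(\sfrow(T^{(k)})))^\rmc}$, and summing over $k$ yields $\ch([X_\alpha]) = \sum_{T \in \SET(\alpha)} F_{\comp(\Des_L(\sfrow(T)))^\rmc} = \calE_\alpha$ by the $F$-expansion recalled just above the theorem.

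For (c), the strategy is to compute $\Endo_{H_n(0)}(X_\alpha)$ and show it is local. Let $T_\alpha$ be the row-superstandard SET filling row $r$ of $\tcd(\alpha)$ from left to right with the integers $\alpha_1 + \cdots + \alpha_{r-1} + 1, \ldots, \alpha_1 + \cdots + \alpha_r$. Then $T_\alpha$ is the unique maximum of the partial order from (b), and iterating the operations $T \mapsto s_i \cdot T$ (applied via $\pi_i$ whenever $c_i > c_{i+1}$) starting from $T_\alpha$ reaches every element of $\SET(\alpha)$, so $T_\alpha$ cyclically generates $X_\alpha$. An endomorphism $f$ is therefore determined by $f(T_\alpha)$, and using the filtration from (b) together with the $H_n(0)$-relations one pins $f(T_\alpha)$ to a coset $c \cdot T_\alpha + \rad(X_\alpha)$. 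This yields a surjection $\Endo_{H_n(0)}(X_\alpha) \twoheadrightarrow \C$ with nilpotent kernel, making the endomorphism ring local and hence $X_\alpha$ indecomposable. The main technical obstacle in this last step is proving the cyclicity of $T_\alpha$ and identifying the radical precisely enough to rule out non-scalar contributions to $f(T_\alpha)$.
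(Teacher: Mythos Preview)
This theorem is not proved in the paper; it is quoted from Searles \cite{19Searles} as background. There is therefore no ``paper's own proof'' to compare against. Your sketch follows what is essentially the standard route for such tableau modules and is the approach taken in \cite{19Searles}: verify the $0$-Hecke relations directly on tableaux, build a composition series by totally ordering $\SET(\alpha)$ compatibly with the $s_i$-action, and read off the composition factors to obtain the $F$-expansion; then prove indecomposability via cyclicity of the row-superstandard tableau and a local endomorphism ring. Your geometric claim that $c_i<c_{i+1}$ is equivalent to $i\in\Des_L(\sfrow(T))$ is correct (the left-justified shape forces, e.g.\ when $r_i<r_{i+1}$, the cell $(r_{i+1},c_i)$ to exist, and the row/column increase then squeezes an integer strictly between $i$ and $i+1$, a contradiction).

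One remark: the present paper does implicitly offer an alternative route. By \cref{interval module structures of V and X}(2) (i.e.\ \cite[Theorem 4.4]{22JKLO}) one has $X_\alpha\cong\sfB(\sfrow(\sfT_\alpha),\sfrow(\sfT'_\alpha))$, and then the character formula \eqref{eq: character of WBI} immediately gives $\ch([X_\alpha])=\sum_{T\in\SET(\alpha)}F_{\comp(\Des_L(\sfrow(T)))^\rmc}=\calE_\alpha$, bypassing the explicit composition series. For indecomposability, one can instead note that $X_\alpha$ is a quotient of $\calV_\alpha$ (\cite[Theorem 3.5]{22CKNO}), which is in turn a quotient of the projective indecomposable $\overline{\bfP}_{\alpha^\rmc}$; any quotient of a projective indecomposable $H_n(0)$-module is indecomposable. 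This avoids analyzing $\Endo_{H_n(0)}(X_\alpha)$ directly, at the cost of importing more machinery.
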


\subsection{Weak Bruhat interval modules}
\label{Weak Bruhat interval modules}
Finally, we introduce \(0\)-Hecke modules known as weak Bruhat interval modules, which were introduced by Jung, Kim, Lee, and Oh in \cite{22JKLO} to provide a unified approach for studying \(0\)-Hecke modules constructed using tableaux.

\begin{definition}{\rm (\cite[Definition 1]{22JKLO})}\label{def: WBI action}
For each left weak Bruhat interval $[\sigma, \rho]_L$ in $\SG_n$,
define $\sfB([\sigma, \rho]_L)$ (simply, $\sfB(\sigma, \rho)$) to be  the $H_n(0)$-module with  $\C[\sigma,\rho]_L$ as the underlying space and with the $H_n(0)$-action defined by
\begin{align*}
\pi_i \cdot \gamma := \begin{cases}
\gamma & \text{if $i \in \Des_L(\gamma)$}, \\
0 & \text{if $i \notin \Des_L(\gamma)$ and $s_i\gamma \notin [\sigma,\rho]_L$,} \\
s_i \gamma & \text{if $i \notin \Des_L(\gamma)$ and $s_i\gamma \in [\sigma,\rho]_L$}.
\end{cases} 
\end{align*}
\end{definition}
The $H_n(0)$-module $\sfB(\sigma, \rho)$ is called the \emph{weak Bruhat interval module associated to $[\sigma,\rho]_L$}.
Considering the above $H_n(0)$-action on $\sfB(\sigma, \rho)$, one can derive that 
\begin{align}\label{eq: character of WBI}
\ch([\sfB(\sigma, \rho)]) = \sum_{\gamma \in [\sigma, \rho]_L} F_{\comp(\Des_L(\gamma))^\rmc}.
\end{align}

The authors proved that \(\calV_\alpha\) and \(X_\alpha\) are equipped with weak Bruhat interval module structures. Let us provide a brief explanation of this result.

We first define two special standard immaculate tableaux of shape $\alpha$. 
Let \(\calT_\alpha\) be the filling of \(\tcd(\alpha)\) with entries \(1, 2, \ldots, n\) from left to right starting with the bottommost row. 
On the other hand, let \(\calT'_\alpha\) be the filling of \(\tcd(\alpha)\) defined by the following steps:
\begin{enumerate}[label = {\rm (\arabic*)}]
\item Fill the first column with entries \(1, 2, \ldots, \ell(\alpha)\) from bottom to top.
\item Fill the remaining cells with entries \(\ell(\alpha) + 1, \ell(\alpha) + 2, \ldots, n\) from left to right starting with the topmost row.
\end{enumerate}
In a similar manner, we define two standard extended tableaux of shape \(\alpha\).
Let 
$\sfT_\alpha$ be the standard extended tableau obtained by filling $\tcd(\alpha)$ with the entries  $1,2,\ldots, n$  from left to right starting with the bottommost row  and $\sfT'_\alpha$ be the standard extended tableau obtained by filling $\tcd(\alpha)$ with the entries $1,2,\ldots, n$   from bottom to top starting with the leftmost column.

\begin{theorem}{\rm (\cite[Theorem 4.4]{22JKLO})}
\label{interval module structures of V and X}
Let $\alpha$ be a composition of $n$.
\begin{enumerate}[label = {\rm (\arabic*)}]

\item The $\C$-linear isomorphism 
\[
\Theta_V: \calV_\alpha \ra \sfB(\sfrow(\calT_\alpha), \sfrow(\calT'_\alpha)), \; \calT \mapsto \sfrow(\calT) \quad (\calT \in \SIT(\alpha))
\]
is an $H_n(0)$-module isomorphism.

\item The $\C$-linear isomorphism 
\[
\Theta_X:X_\alpha \ra \sfB(\sfrow(\sfT_\alpha), \sfrow(\sfT'_\alpha)), \; \sfT \mapsto \sfrow(\sfT) \quad\;\,\, (\sfT \in \SET(\alpha))
\]
is an $H_n(0)$-module isomorphism.
\end{enumerate}
\end{theorem}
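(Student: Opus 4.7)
The plan is to prove parts (1) and (2) by the same parallel scheme: establish that the reading-word map $\Theta_V$ (resp.\ $\Theta_X$) is a bijection from $\SIT(\alpha)$ (resp.\ $\SET(\alpha)$) onto the claimed weak Bruhat interval, and then verify $H_n(0)$-equivariance by matching the three-case tableau action against \cref{def: WBI action}. Injectivity is immediate, since the shape $\alpha$ is fixed and the reading path is determined by $\alpha$, so a filling can be recovered from its reading word by placing the letters along that path. The extremal tableaux $\calT_\alpha$ and $\calT'_\alpha$ are themselves standard immaculate tableaux, so $\sfrow(\calT_\alpha), \sfrow(\calT'_\alpha) \in \operatorname{Im}(\Theta_V)$.

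The combinatorial heart is the set equality $\{\sfrow(\calT) : \calT \in \SIT(\alpha)\} = [\sfrow(\calT_\alpha), \sfrow(\calT'_\alpha)]_L$. For ``$\subseteq$'', I would verify $\rmInv_L(\sfrow(\calT_\alpha)) \subseteq \rmInv_L(\sfrow(\calT)) \subseteq \rmInv_L(\sfrow(\calT'_\alpha))$ for any $\calT \in \SIT(\alpha)$ by analyzing each position-pair inversion in terms of the relative placement of the two corresponding labels in the tableau; the SIT constraints (row-strict and first-column-strict) force the pairs that must be inverted in every reading word as well as those that cannot be inverted. For ``$\supseteq$'', I would show that $\operatorname{Im}(\Theta_V)$ is closed under the covering relation $\preceq_L^\rmc$ inside the interval. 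Suppose $\calT \in \SIT(\alpha)$, that $s_i \sfrow(\calT)$ covers $\sfrow(\calT)$, and that $s_i \sfrow(\calT) \in [\sfrow(\calT_\alpha), \sfrow(\calT'_\alpha)]_L$. Then the filling obtained from $\calT$ by swapping the labels $i$ and $i+1$ is a SIT with reading word $s_i \sfrow(\calT)$, unless $i$ and $i+1$ both lie in the first column of $\calT$; in that exceptional case, a direct inversion-count (using that $\sfrow(\calT'_\alpha)$ places $1,2,\ldots,\ell(\alpha)$ in increasing order at the first-column positions of the reading path) shows $s_i \sfrow(\calT) \not\preceq_L \sfrow(\calT'_\alpha)$, contradicting the hypothesis. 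Since the interval is connected by covers and contains at least one element of the image, closure forces the equality.

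With the bijection in hand, action compatibility is a case check. If $i$ is weakly above $i+1$ in $\calT$, then $i+1$ precedes $i$ in the bottom-to-top row reading, so $i \in \Des_L(\sfrow(\calT))$ and both actions fix the element. If $i$ and $i+1$ both lie in the first column, the preceding paragraph gives $s_i \sfrow(\calT) \notin [\sfrow(\calT_\alpha), \sfrow(\calT'_\alpha)]_L$, matching $\pi_i \cdot \calT = 0$. Otherwise $i$ is strictly below $i+1$ and the swap produces a SIT whose reading word is $s_i \sfrow(\calT)$, which lies in the interval, matching $\pi_i \cdot \calT = s_i \cdot \calT$. Part (2) proceeds identically with SIT replaced by SET; the only substantive change is that the zero case for the $X_\alpha$-action is triggered when $i, i+1$ share \emph{any} column, reflecting the strict column increase of SETs, and correspondingly this is the configuration whose cover move exits $[\sfrow(\sfT_\alpha), \sfrow(\sfT'_\alpha)]_L$. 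The main obstacle is the closure-under-covers step above: one must verify that each SIT-violating (resp.\ SET-violating) swap is precisely a cover move that exits the relevant interval. This reduces to a careful inversion-set comparison against $\sfrow(\calT'_\alpha)$ (resp.\ $\sfrow(\sfT'_\alpha)$), and exploits the explicit structure of the extremal tableaux $\calT'_\alpha$ and $\sfT'_\alpha$ in which the first column carries the smallest labels in increasing order.
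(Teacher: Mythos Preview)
The paper does not give its own proof of this theorem; it is quoted verbatim from \cite{22JKLO} and used as a black box. Your proposal is therefore not being compared against an argument in the present paper but rather supplies a self-contained proof.

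Your scheme is sound and essentially the standard one. The inversion-set check for the inclusion $\{\sfrow(\calT):\calT\in\SIT(\alpha)\}\subseteq[\sfrow(\calT_\alpha),\sfrow(\calT'_\alpha)]_L$ works exactly as you describe: same-row position pairs are forced inversions by row-strictness (giving the lower bound), and the non-inversions of $\sfrow(\calT'_\alpha)$ are precisely the pairs where the earlier position lies in column~$1$, which first-column-strictness plus row-strictness handles (giving the upper bound). For the reverse inclusion, your closure-under-covers argument is correct; the key point you identify---that if $i,i+1$ are both in column~$1$ then the swap creates the inversion at the two first-column reading positions, which is absent from $\rmInv_L(\sfrow(\calT'_\alpha))$---is exactly what is needed, and the remaining swap cases do stay in $\SIT(\alpha)$. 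The equivariance case-match then falls out as you say. The SET argument is parallel; one should also note (you implicitly use this) that for $T\in\SET(\alpha)$ the configuration ``$i$ strictly below $i+1$ with $i$ strictly left of $i+1$'' is impossible, so the cover moves inside the interval are exactly the SET-preserving swaps. With that observation made explicit, your proof is complete.
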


\section{An analogue of the Robinson-Schensted-Knuth Algorithm and its Greene’s Theorem}
\label{An analogue of RSK and its Greene Theorem}

In this section, we introduce an analogue of the Robinson-Schensted-Knuth algorithm and 
 examine how the shapes of the insertion tableaux are determined. In the case of the  Robinson-Schensted-Knuth algorithm, this issue was resolved by Greene \cite{74Gre}. 
The result we establish here will be essential for proving 
the main theorems of \cref{Distinguished filtrations of V and X}.

\subsection{An analogue of the Robinson-Schensted-Knuth Algorithm}\label{subsec: an analog of RSK}
Schensted's insertion is a procedure for inserting a positive integer into a semistandard Young tableau, which is the fundamental operation of the Robinson-Schensted-Knuth algorithm.
Using this algorithm, for each $\sigma \in \SG_n$,
we associate a pair of standard Young tableaux $(\Ptab(\sigma), \Qtab(\sigma))$ with the same shape.
It is well known that the map
\[
\SG_n \ra \bigsqcup_{\lambda \vdash n} \SYT(\lambda) \times \SYT(\lambda), \quad \sigma \mapsto (\Ptab(\sigma), \Qtab(\sigma))
\]
is bijective.

In 2006, Mason \cite{06Mason} introduced a procedure for inserting a nonnegative integer into a semi-skyline augmented filling. 
In 2011, Haglund, Luoto, Mason, and van Willigenburg \cite[pages 480-481]{11HLMW} rewrote this as a procedure for inserting a nonnegative integer into a composition tableau
through the weight-preserving bijection, provided in \cite[Lemma 4.2]{11HLMW}.

\medskip
\textbf{Convention.}
In this paper, a semi-skyline augmented filling is identified with the corresponding composition tableau through the weight-preserving bijection in \cite[Lemma 4.2]{11HLMW}. Therefore, we use composition tableaux instead of semi-skyline augmented fillings. 
With this identification, all the results from \cite{06Mason}, originally described in terms of semi-skyline augmented fillings, are interpreted as results involving composition tableaux. 
In particular, Mason's insertion procedure is viewed as the procedure in \cite[pages 480-481]{11HLMW}, and the algorithm in \cite[Procedure 4.2]{06Mason} is understood as one that produces a pair of composition tableaux from a two-line array.
\medskip

In 2018, Allen, Hallam, and Mason \cite[Procedure 3.1]{18AHM} introduces a related procedure for inserting a nonnegative integer into a Young composition tableau.
In this paper, we primarily focus on this insertion procedure.
\begin{procedure}\label{alg: insertion algorithm} (\cite[Procedure 3.1]{18AHM})
Let $T$ be a Young  composition tableau of shape $\alpha \models n$ and $k$ a positive integer.
Let \(\tcd(\alpha) = \{(c_1, d_1), (c_2, d_2), \ldots, (c_n, d_n)\}\), where each pair \((c_i, d_i)\) satisfies the following lexicographic order:
For indices \(1 \leq i < j \leq n\),
\[
d_i > d_j \quad \text{or} \quad d_i = d_j \text{ and } c_i > c_j.
\]
Set $i := 1$ and $k_0 := k$.
\begin{enumerate}
\item[\textbf{P1.}] If $i \leq n$, then go to \textbf{P2}.
Otherwise,  go to \textbf{P3}.

\item[\textbf{P2.}] 
We proceed by considering the following three cases.

\noindent
{\it Case 1:} $(c_i, d_i + 1) \notin \tcd(\alpha)$ and $T(c_i, d_i) \leq k_0$. 
Place $k_0$ in the cell $(c_i, d_i + 1)$ and then terminate the procedure.

\noindent
{\it Case 2:} $(c_i, d_i + 1) \in \tcd(\alpha)$ and $T(c_i, d_i) \leq k_0 < T(c_i, d_i + 1)$.  
    Set $k_1 := T(c_i, d_i + 1)$, and 
update $T(c_i, d_i + 1)$ to $k_0$ and $k_0$ to $k_1$.
Then,  update $i$ to $i+1$ and
go to \textbf{P1}.

\noindent
{\it Case 3:}  If neither of the above applies, 
update $i$ to $i+1$ and go to \textbf{P1}.

\item[\textbf{P3.}] Create a new row  with  a single cell containing $k_0$  and place the row so that  
the first column  strictly increases from bottom to top. 
Then, terminate the procedure.
\end{enumerate}
\end{procedure}
We denote the Young  composition tableau obtained from 
\cref{alg: modified mason algotithm} as $k \ra T$.
The set of cells that are modified during the insertion process, including the final cell  added is referred to as the {\it insertion sequence}. The final cell added to the tableau is called the {\it new cell}. 

\begin{example}
Let 
\[
T =
\begin{array}{c}
\begin{ytableau}
5 \\
2& 2& 3\\
1& 4
\end{ytableau}
\end{array}
\quad \text{and} \quad k = 2.
\]
Then, $k \ra T$ is given by 
\[
\begin{array}{c}
    \begin{ytableau}
        5\\ 
        4\\
        2 & 2 & 2\\
        1 & 3 
    \end{ytableau}
\end{array}
\]
with the insertion sequence $\{(2,3), (1,2), (3,1)\}$ and the new cell $(3, 1)$.
\end{example}
Let \(\tilcalW_n\) be the set of all two-line arrays 
\[
w = \begin{pmatrix}
    i_1 &  i_2 & \ldots & i_n \\
    j_1 &  j_2 & \ldots & j_n
\end{pmatrix}
\]
of positive integers such that for \(1 \leq r \leq n-1\),
\[
i_r < i_{r+1} \quad \text{or} \quad i_r = i_{r+1} \text{ and } j_r \leq j_{r+1}.
\]
Each word \( w = w_1 w_2 \dots w_n \) consisting of positive integers, especially a permutation, is identified with the two-line array
\[
\left(
\begin{array}{cccc}
    1 & 2 & \dots & n \\
    w_1 & w_2 & \dots & w_n
\end{array}
\right) \in \tilcalW_n \,.
\]
By  modifying  \cite[Procedure 4.2]{06Mason}, we present an algorithm that produces a pair of fillings of composition diagrams from a two-line array. 

\begin{algorithm}\label{alg: modified mason algotithm}
Let 
\[
w = \left(
\begin{array}{cccc}
    i_1 & i_2 & \ldots & i_n  \\
    j_1 & j_2 & \ldots & j_n
\end{array}
\right) \in \tilcalW_n \,.
\]
Let $\hat{P} = \emptyset$, $\hat{Q} = \emptyset$, and $k = 1$.
\begin{enumerate}
    \item[\textbf{P1.}]
    Say the new cell  created during $j_k \ra \hat{P}$ is located in the $c$th column of $j_k \ra \hat{P}$.
    Update $\hat{P}$ to $j_k \ra \hat{P}$.
    
   \item[\textbf{P2.}]
    If $c = 1$, then create a new row with a single cell containing $i_k$  and place the row so that the first column of $\hat{Q}$ increases from bottom to top. 
    Otherwise, denote the topmost row of $\hat{Q}$ with $c-1$ cells by the $r$th row of $\hat{Q}$ and update $\hat{Q}(r, c) = i_k$.
    \item[\textbf{P3.}]
    If $k < n$, update $k$ to $k+1$ and go back to the step \textbf{P1}.
    Otherwise, terminate the process.
\end{enumerate}
\end{algorithm}

We denote the pair $(\hat{P}, \hat{Q})$ of fillings obtained by applying \cref{alg: modified mason algotithm} to $w$ as $(\hatPtab(w), \hatQtab(w))$.
It is evident that $\hatPtab(w)$ is a Young composition tableau. Furthermore, from \cref{lem: relation b2w hatP and checkP}, it follows that 
$\hatQtab(w)$ is also a Young composition tableau.

\begin{example}
Let 
\[
w = \left(
\begin{array}{ccccc}
    1 & 2 & 3 & 3 & 4  \\
    3 & 1 & 2 & 5 & 4 
\end{array}
\right) \in \tilcalW_5.
\]
Then, we see that 
\begin{align*}
&
\left(
\begin{array}{c}
\scalebox{0.9}{
\begin{ytableau}
    3
\end{ytableau}}
\end{array}
, 
\begin{array}{c}
\scalebox{0.9}{
\begin{ytableau}
    1
\end{ytableau}}
\end{array}
\right)
\rightsquigarrow 
\left(
\begin{array}{c}
\scalebox{0.9}{
\begin{ytableau}
    3\\
    1
\end{ytableau}}
\end{array},
\begin{array}{c}
\scalebox{0.9}{
\begin{ytableau}
    2\\
    1 
\end{ytableau}}
\end{array}
\right)
\rightsquigarrow 
\left(
\begin{array}{c}
\scalebox{0.9}{
\begin{ytableau}
    3\\
    1 & 2
\end{ytableau}}
\end{array}, 
\begin{array}{c}
\scalebox{0.9}{
\begin{ytableau}
    2 & 3\\
    1
\end{ytableau}}
\end{array}
\right) 
\rightsquigarrow
\left(
\begin{array}{c}
\scalebox{0.9}{
\begin{ytableau}
    3\\
    1 & 2 & 5
\end{ytableau}}
\end{array}, 
\begin{array}{c}
\scalebox{0.9}{
\begin{ytableau}
    2 & 3 & 3\\
    1 
\end{ytableau}}
\end{array}
\right)\\
&\rightsquigarrow 
\left(
\begin{array}{c}
\scalebox{0.9}{
\begin{ytableau}
    3 & 5\\
    1 & 2 & 4
\end{ytableau}}
\end{array},
\begin{array}{c}
\scalebox{0.9}{
\begin{ytableau}
    2 & 3 & 3\\
    1 & 4
\end{ytableau}}
\end{array}
\right) = (\hatPtab(w), \hatQtab(w)).
\end{align*}
\end{example}

We will derive important properties of \cref{alg: modified mason algotithm} from those of \cite[Procedure 4.2]{06Mason}. 
As a first step, we examine the relationship between \cref{alg: insertion algorithm} and Mason's insertion procedure.
For a finite subset \( A = \{a_1 < a_2 < \dots < a_k\} \subseteq \Z_{>0} \),
consider the bijection
\begin{equation*}
    \sfrev_A: A \to A, \quad a_i \mapsto a_{k+1-i}.
\end{equation*}
For any filling $T$ of a composition diagram, 
we denote by
$\sfent(T)$   the set of all entries in $T$.
With these notations, we have a bijection
\[
\sfrevmap: \{\text{composition tableaux}\} \to \{\text{Young composition tableaux}\}, \quad T \mapsto \sfrev_{\sfent(T)} \circ T.
\]
For each composition $\alpha$, this map induces a bijection
\begin{equation*}
\sfrevmap|_{\SCT(\alpha)}: \SCT(\alpha) \to  \SYCT(\alpha),
\end{equation*}
which satisfies, for any \( T \in \SCT(\alpha) \), the relation
\begin{align*}
    \Des_{\scrS}(T) = 
    \{|\alpha| - i \mid i \in \Des_{\hscrS}(\sfrevmap(T)) \}.
\end{align*}
For clarity, we denote the composition tableau obtained by inserting $k$ into the  composition tableau $T$ using 
\cite[pages 480-481]{11HLMW} as \( k \CTra T \).
The following lemma is easily derived.

\begin{lemma}{\rm(cf. \cite[page 78]{18AHM})}\label{prop: relation b2w YCT and CT}
Let $T$ be a Young composition tableau and $k$ a positive integer.
Then, the following relations hold.
\begin{enumerate}[label = {\rm (\arabic*)}]
    \item $k \ra T = \sfrevmap(\sfrev_{\sfent(k \ra T)} (k) \CTra \sfrev_{\sfent(k \ra T)}  \circ T)$.  
    \item The new cells created by inserting $k$ into $T$ using \cref{alg: insertion algorithm}  and by inserting 
    $\sfrev_{\sfent(k \ra T)}(k)$ into $\sfrev_{\sfent(k \ra T)} \circ T$ using the procedure described in \cite[pages 480-481]{11HLMW} are same.
\end{enumerate}
\end{lemma}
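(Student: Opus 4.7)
The plan is to verify, by tracking both algorithms step by step, that \cref{alg: insertion algorithm} applied to $(T,k)$ and Mason's insertion procedure of \cite[pages 480-481]{11HLMW} applied to $(\sfrev_A \circ T, \sfrev_A(k))$, where $A := \sfent(k \ra T)$, make exactly the same sequence of decisions on corresponding cells, and that at every stage their intermediate entries are related by $\sfrev_A$. Once this is established, parts (1) and (2) follow simultaneously.

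First, I would set up the translation dictionary induced by $\sfrevmap$. Since $\sfrev_A$ reverses the linear order on $A$, a filling $T'$ of $\tcd(\alpha)$ with entries in $A$ is a Young composition tableau if and only if $\sfrev_A \circ T'$ is a composition tableau: the weakly-increasing row condition becomes the weakly-decreasing row condition for CTs, strict increase in the first column from bottom to top becomes strict increase from top to bottom, and the Young triple rule becomes the ordinary triple rule. This is precisely how $\sfrevmap$ is defined, and it justifies identifying the intermediate states of the two algorithms via $\sfrev_A$.

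Next, I would compare the two algorithms cell by cell. Both procedures visit the cells of $\tcd(\alpha)$ in the same lexicographic order (right-to-left by column, then top-to-bottom within each column) and at each visited cell choose among three branches (terminate, bump, or skip). For \cref{alg: insertion algorithm}, the branching conditions are the inequalities $T(c_i,d_i)\le k_0$ and, when applicable, $k_0 < T(c_i,d_i+1)$. Applying $\sfrev_A$ to both sides reverses each inequality, producing exactly the inequalities that govern Mason's CT insertion on $(\sfrev_A\circ T, \sfrev_A(k_0))$. By induction on the scan index $i$, if the running value in the YCT insertion at step $i$ is $k_0$, then the running value in the CT insertion at the same step is $\sfrev_A(k_0)$, and the two procedures take corresponding branches. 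Hence they terminate at the same cell, bump at the same cells, and produce final tableaux that are $\sfrev_A$-images of each other, which is (1); since the insertion sequences coincide as sets of cells (including the final new cell), (2) follows at the same time.

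The only bookkeeping subtlety is the consistent choice of the reversal set: every intermediate state of either algorithm is a filling whose entries lie in $A = \sfent(k \ra T)$, and no entries outside $A$ are ever introduced (each bump merely permutes entries inside $A$ together with the newly inserted $k$). Hence $\sfrev_A$ acts uniformly throughout the comparison. I expect no genuine obstacle beyond this verification; the lemma is essentially a definitional compatibility between the two insertion procedures under $\sfrevmap$, which is why the statement can be described as easily derived from \cite[Procedure 3.1]{18AHM} and \cite[pages 480-481]{11HLMW}.
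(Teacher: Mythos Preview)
Your proposal is correct and is essentially the natural approach; indeed the paper does not give a proof at all, simply stating that the lemma ``is easily derived,'' so your explicit step-by-step comparison of the two insertion procedures under the order-reversing bijection $\sfrev_A$ is exactly the verification one would carry out.
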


Now, given a two-line array \( w \in \tilcalW_n \), we denote by \( (F(w), G(w)) \) the pair of composition tableaux obtained by applying \cite[Procedure 4.2]{06Mason} to \( w \).
For \( w = \left(
\begin{array}{cccc}
    i_1 & i_2 & \dots & i_n \\
    j_1 & j_2 & \dots & j_n
\end{array}
\right) \in \tilcalW_n \),
define 
\begin{align*}
 w[s:t] & := \left(
\begin{array}{cccc}
    i_s & i_{s+1} & \dots & i_t \\
    j_s & j_{s+1} & \dots & j_t
\end{array}
\right) \quad \text{for } 1 \leq s \leq t \leq n, \\
 w^{\rmconj} & := 
\left(
\begin{array}{cccc}
    \sfrev_X(i_n) & \sfrev_X(i_{n-1}) & \dots & \sfrev_X(i_1) \\
    \sfrev_Y(j_n) & \sfrev_Y(j_{n-1}) & \dots & \sfrev_Y(j_1)
\end{array}
\right),
\end{align*}
where \( X = \{i_t \mid 1 \leq t \leq n\} \) and \( Y = \{j_t \mid 1 \leq t \leq n\} \).
The second notation is introduced because, 
if \( w \in \SG_n \), then \( w^{\rmconj} =  w^{w_0} \).
It is straightforward to see that \( w[s:t] \in \tilcalW_{t-s+1} \), \( w^{\rmconj} \in \tilcalW_n \) and the map
\[
\tilcalW_n \to \tilcalW_n, \quad w \mapsto w^\rmconj
\]
is a bijection.
Using these notations, we can derive the following lemma from \cref{prop: relation b2w YCT and CT}.

\begin{lemma}\label{lem: relation b2w hatP and checkP}
Let $w = \left(
\begin{array}{cccc}
    i_1 & i_2  & \ldots &i_n  \\
    j_1 & j_2  & \ldots &j_n
\end{array}
\right) \in \tilcalW_n$. Then, we have
\[
\hatPtab(w) = \sfrevmap(F(w^\rmconj)) \quad \text{and} \quad \hatQtab(w) = \sfrevmap(G(w^\rmconj)).
\]
\end{lemma}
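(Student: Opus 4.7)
The plan is to prove both identities by induction on $n$, using \cref{prop: relation b2w YCT and CT} as the bridge between a single $\ra$-step of \cref{alg: modified mason algotithm} and a single $\CTra$-step of \cite[Procedure 4.2]{06Mason}. The base case $n = 0$ is immediate, since both sides produce the empty pair. For the inductive step, I plan to peel off the last biletter $(i_n, j_n)$ of $w$, which corresponds to peeling off the first biletter of $w^\rmconj$. On the one hand, the last iteration of \cref{alg: modified mason algotithm} on $w$ inserts $j_n$ into $\hatPtab(w[1:n-1])$ via $\ra$, and by \cref{prop: relation b2w YCT and CT}(1) this equals the $\sfrevmap$-image of a $\CTra$-insertion of $\sfrev_Y(j_n)$ into $\sfrev_Y \circ \hatPtab(w[1:n-1])$, where $Y = \{j_1, \ldots, j_n\}$. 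On the other hand, the first iteration of \cite[Procedure 4.2]{06Mason} on $w^\rmconj$ is exactly the $\CTra$-insertion of $\sfrev_Y(j_n)$ into the empty tableau, and the remaining $n-1$ iterations process $\sfrev_Y(j_{n-1}), \ldots, \sfrev_Y(j_1)$ in that order. Matching these two descriptions is the central task of the inductive step, and the construction of $w^\rmconj$ (reversing columns and applying $\sfrev_X, \sfrev_Y$ to entries) is precisely what makes this alignment work.

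The main obstacle is that $\sfrevmap(T) = \sfrev_{\sfent(T)} \circ T$ depends on the entry set of $T$, which grows as the algorithms progress. Since $\sfrev_Y$ and $\sfrev_{\sfent(P_k)}$ differ in general whenever $\sfent(P_k) \subsetneq Y$, a naive iteration of \cref{prop: relation b2w YCT and CT} does not close up cleanly: the intermediate composition tableaux produced by the single-step conversions are not themselves the tableaux produced by any simple reordering of $w$. I would overcome this by carefully tracking the entry set at each step, and by using \cref{prop: relation b2w YCT and CT}(2)---which asserts that the columns of the newly created cells agree under both insertion rules---to align the insertion paths globally rather than locally.

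Once the $\hatPtab$-identity is established, the $\hatQtab$-identity follows almost directly. Indeed, \cref{alg: modified mason algotithm} and \cite[Procedure 4.2]{06Mason} both record, at each step, the biletter's top entry in the cell determined by the column of the new cell produced during the corresponding insertion. By part (2) of \cref{prop: relation b2w YCT and CT}, the sequence of new-cell columns is the same in the two procedures (once the alignment of steps from the $\hatPtab$-argument is in hand). The conjugation $w \mapsto w^\rmconj$ reverses the recording values from $i_1, \ldots, i_n$ to $\sfrev_X(i_n), \ldots, \sfrev_X(i_1)$, which is exactly the transformation that $\sfrevmap$ undoes on $G(w^\rmconj)$, yielding $\hatQtab(w) = \sfrevmap(G(w^\rmconj))$.
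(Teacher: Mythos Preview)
Your overall approach---induction on $n$, peeling off the last biletter of $w$, and bridging the two insertions via \cref{prop: relation b2w YCT and CT}---is exactly what the paper does, and you have correctly identified the central obstacle: $\sfrevmap$ depends on the entry set, so $(w[1{:}n-1])^\rmconj$ and $w^\rmconj[2{:}n]$ are not literally equal but differ by an order-preserving relabeling. The paper's device for ``carefully tracking the entry set'' is to introduce the unique order-preserving bijection $\iota_{A,B}:A\to B$ between two finite sets of equal size and to check that $F((w[1{:}n-1])^\rmconj) = \iota_{Y'_1,Y'}\circ F(w^\rmconj[2{:}n])$, where $Y' = \{j_1,\ldots,j_{n-1}\}$ and $Y'_1 = \sfrev_Y(Y')$. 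Combined with the induction hypothesis this collapses to the key identity $\sfrev_Y \circ \hatPtab(w[1{:}n-1]) = F(w^\rmconj[2{:}n])$, after which a single application of \cref{prop: relation b2w YCT and CT}(1) finishes the $\hatPtab$-case.

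One point in your description needs correction. You align the \emph{last} step of \cref{alg: modified mason algotithm} on $w$ with the \emph{first} step of \cite[Procedure~4.2]{06Mason} on $w^\rmconj$, but for the induction to close up you need last-to-last: the identity used in the paper is $F(w^\rmconj) = \sfrev_Y(j_n) \CTra F(w^\rmconj[2{:}n])$, i.e.\ inserting $\sfrev_Y(j_n)$ (the bottom entry of column~$1$ of $w^\rmconj$) into $F(w^\rmconj[2{:}n])$ is the \emph{final} $\CTra$-step producing $F(w^\rmconj)$, not the initial one. With this alignment and the $\iota$-device in hand, the $\hatQtab$-half proceeds just as you outline, using \cref{prop: relation b2w YCT and CT}(2) together with the analogous relabeling identity $G((w[1{:}n-1])^\rmconj) = \iota_{X'_1,X'}\circ G(w^\rmconj[2{:}n])$ on the top-row alphabet.
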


\begin{proof}
We first prove that $\hatPtab(w) = \sfrevmap(F(w^\rmconj))$.
We achieve our purpose by applying mathematical induction on $n$.
Since it is trivial for $n = 1$, we may assume that $n > 1$.
Let $w' := w[1:n-1]$ and let
\begin{align*}
Y := \{j_t \mid 1 \leq t \leq n \}, \;\, 
Y' := \{j_t \mid 1 \leq t \leq n - 1\}, \;\; \text{and} \;\;
 Y'_1 := \sfrev_Y(Y').
\end{align*}
Given any two finite subsets \( A, B \subseteq \Z_{>0} \) with \( |A| = |B| \), let \( \iota_{A,B}: A \to B \) denote the order-preserving bijection such that for any \( a_1, a_2 \in A \),
\[
a_1 < a_2 \quad \text{if and only if} \quad
\iota_{A, B}(a_1) < \iota_{A, B}(a_2).
\]
Then one sees that $F(w'^{\;\rmconj}) = \iota_{Y'_1, Y'} \circ F(w^\rmconj[2:n])$.
Combining this equality with the induction hypothesis, we derive that 
\begin{align}\label{eq: hatPtab and checkPtab}
\hatPtab(w') = \sfrevmap(F(w'^{\;\rmconj}))  = \sfrev_{Y'} \circ \iota_{Y'_1, Y'} \circ F(w^\rmconj[2:n]) = \sfrev_{Y} \circ F(w^\rmconj[2:n]). 
\end{align}
Here, the last equality follows from 
\[
\sfrev_{Y'} \circ \iota_{Y'_1, Y'} (y_k) = \sfrev_{Y'}(\sfrev_{Y}(y_{m+1 -k})) = \sfrev_{Y}(y_{k}),
\]
when we write $Y'_1 = \{y_1 < y_2 < \cdots < y_m\}$.
Therefore, we have 
\begin{align*}
\hatPtab(w)
&= j_n  \ra \hatPtab(w')\\
&= \sfrevmap(\sfrev_Y(j_n) \CTra \sfrev_Y \circ \hatPtab(w')) \quad  \text{(by \cref{prop: relation b2w YCT and CT}(1))}\\
&= \sfrevmap(\sfrev_Y(j_n)  \CTra F(w^\rmconj[2:n])) \quad \text{(by \cref{eq: hatPtab and checkPtab})}\\
&= \sfrevmap(F(w^\rmconj)).
\end{align*}

Next, we prove that  $\hatQtab(w) = \sfrevmap(G(w^\rmconj))$ by applying mathematical induction on $n$. Since it is trivial for $n = 1$, we may assume that $n > 1$.
Let
\begin{align*}
X := \{i_t \mid 1 \leq t \leq n \}, \;\,
X' := \{i_t \mid 1 \leq t \leq n-1\}, \;\; \text{and} \;\; X'_1 := \sfrev_X(X').
\end{align*}
One sees that $G(w'^{\;\rmconj}) =  \iota_{X'_1, X'} \circ G(w^\rmconj[2:n])$.
Combining this equality with the induction hypothesis, we derive that 
\begin{align}\label{eq: induction on check and hat Qtab}
\hatQtab(w') = \sfrevmap(G(w'^{\;\rmconj})) = \iota_{X'_1, X'} \circ G(w^\rmconj[2:n]).
\end{align}
This implies that $\hatQtab(w')$ and $G(w^{\rmconj}[2:n])$ have the same shape, denoted  by $\alpha$. 
And, by \cref{prop: relation b2w YCT and CT}(2), the new cells created during $j_n \ra \hatPtab(w')$ and $\sfrev_Y(j_n) \ra F(w^\rmconj[2:n])$ are same. Denote this cell by $(r, c)$.
Therefore, it follows that
\begin{itemize}
    \item $\hatQtab(w)$ is obtained from $\hatQtab(w')$ by filling $(k, c)$ with $i_n$, and
    \item $G(w^{\rmconj})$ is obtained from $G(w^\rmconj [2: n])$ by filling  $(k, c)$ with $\sfrev_X(i_n)$,
\end{itemize}
where $k = \max\{1 \leq t \leq \ell(\alpha) + 1 \mid \alpha_t = c-1 \}$.
Now, the desired result can be derived by combining these properties with
\cref{eq: induction on check and hat Qtab}.
\end{proof}

Consider the correspondence 
\begin{equation}\label{eq: pseudo RSK corr} 
\SG_n \ra \bigsqcup_{\substack{\alpha, \beta \models n \\ \lambda(\alpha) = \lambda(\beta)}} \SYCT(\alpha) \times \SYCT(\beta), \quad w \mapsto (\hatPtab(w), \hatQtab(w)). 
\end{equation}
By combining the properties in \cite[Section 4]{06Mason} with \cref{lem: relation b2w hatP and checkP}, we derive the following properties:
\begin{itemize}
    \item 
    The correspondence given in \cref{eq: pseudo RSK corr} is a bijection.
    \item For $\sigma \in \SG_n$,
    \begin{align}\label{eq: due to Mason prop 1}
    \hatPtab(\sigma^{-1}) = \hatQtab(\sigma) \quad \text{and} \quad \hatQtab(\sigma^{-1}) = \hatPtab(\sigma).
    \end{align}
    \item For $\sigma, \rho \in \SG_n$, 
     \begin{align}\label{eq: due to Mason prop 2}
   \Ptab(\sigma) = \Ptab(\rho) \quad \text{if and only if} \quad \hatPtab(\sigma) = \hatPtab(\rho).
    \end{align} 
\end{itemize}
Another important property is that the mapping $\sigma \mapsto \hatPtab(\sigma)$ preserves descents, as shown in the following proposition:

\begin{proposition}\label{prop: left descent and Young descent}
For $\sigma \in \SG_n$, we have
\[
\Des_L(\sigma) = \Des_{\hscrS}(\hatPtab(\sigma)).
\]
For the definition of $\Des_{\hscrS}(\hatPtab(\sigma))$, see \cref{eq: descent of Young comoposition tableau}.
\end{proposition}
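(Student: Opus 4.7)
The plan is to first reduce the proposition to an equivalent statement about the recording tableau $\hatQtab$, and then prove a two-insertion lemma for \cref{alg: insertion algorithm}.

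By the identity $\hatPtab(\sigma) = \hatQtab(\sigma^{-1})$ in \eqref{eq: due to Mason prop 1}, together with $\Des_L(\sigma) = \Des_R(\sigma^{-1})$ where $\Des_R(\tau) := \{i \in [n-1] : \tau(i) > \tau(i+1)\}$, the identity to be proved is equivalent to
\begin{equation*}
\Des_R(\tau) = \Des_{\hscrS}(\hatQtab(\tau)) \quad \text{for every } \tau \in \SG_n.
\end{equation*}
This reformulation is more tractable because, by construction of \cref{alg: modified mason algotithm}, entry $i$ of $\hatQtab(\tau)$ is placed in the cell that is newly created during the $i$-th step (the insertion of $\tau(i)$), and because the only new rows produced by \cref{alg: insertion algorithm} are appended in the first column, the \emph{column index} of each entry of $\hatQtab(\tau)$ is fixed at the moment of placement and is left unchanged by any subsequent step.

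Writing $c_i(\tau)$ for the column of the new cell created at the $i$-th step, the condition $i \in \Des_{\hscrS}(\hatQtab(\tau))$ becomes precisely $c_{i+1}(\tau) \le c_i(\tau)$. Hence the proposition reduces to the following two-insertion lemma: \emph{let $T$ be a Young composition tableau and let $a, b$ be distinct positive integers not belonging to $\sfent(T)$; if $c_a$ (respectively $c_b$) is the column of the new cell created by inserting $a$ into $T$ via \cref{alg: insertion algorithm} (respectively by inserting $b$ into $a \ra T$), then $a < b$ if and only if $c_a < c_b$}. Granting this, applying it at each step with $T$ equal to the running insertion tableau, $a = \tau(i)$, and $b = \tau(i+1)$ yields the equivalence $\tau(i) < \tau(i+1) \iff c_i(\tau) < c_{i+1}(\tau)$, which is exactly $i \notin \Des_R(\tau) \iff i \notin \Des_{\hscrS}(\hatQtab(\tau))$.

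The principal obstacle is proving the two-insertion lemma. The plan is a direct case analysis of \cref{alg: insertion algorithm}. Since the algorithm scans cells from right to left (by columns) and top to bottom within each column, and since inserting $a$ only alters cells in columns weakly to the left of $c_a$, the scan for $b$ in $a \ra T$ coincides with the hypothetical scan for $b$ in $T$ across all columns strictly to the right of $c_a$. One then splits according to whether the insertion of $a$ terminates via \textbf{P3} (so $c_a = 1$) or via Case 1 of \textbf{P2} (so $c_a > 1$), and tracks how the local modification at column $c_a$ either opens a Case 1 opportunity for $b$ in a column strictly right of $c_a$ when $b > a$, or is effectively bypassed by $b$'s scan until columns strictly left of $c_a$ when $b < a$. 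A cleaner alternative is to transport the lemma via \cref{prop: relation b2w YCT and CT} to the corresponding statement for the HLMW/Mason CT insertion of \cite[pp.\,480--481]{11HLMW}, where the analogous monotonicity of new-cell columns is implicit in Mason's verification in \cite[Section~4]{06Mason} that the recording tableau of her algorithm is itself a CT.
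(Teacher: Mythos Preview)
Your route is genuinely different from the paper's. The paper first treats the special case $\sigma=\sfrow(T)w_0$ for $T\in\SYT_n$ by quoting \cite[Lemmas~3.11--3.12]{18AHM}, and then reduces an arbitrary $\sigma$ to this case via Knuth equivalence: since $\Ptab(\sfrow(T)w_0)=T=\Ptab(\sigma)$, both $\Des_L$ and, by \cref{eq: due to Mason prop 2}, $\hatPtab$ agree on the two permutations. Your reduction through $\hatPtab(\sigma)=\hatQtab(\sigma^{-1})$ to a two-insertion monotonicity lemma is a reasonable and more self-contained strategy, and the reformulation $i\in\Des_{\hscrS}(\hatQtab(\tau))\iff c_{i+1}(\tau)\le c_i(\tau)$ is correct (only the column of each entry of $\hatQtab$ matters and is permanent).

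The gap is in your sketch of the two-insertion lemma. In \cref{alg: insertion algorithm} the scan proceeds through columns from right to left, and every cell in the insertion sequence---each Case~2 bump as well as the terminating new cell---lies in a column $\ge c_a$. Thus inserting $a$ alters cells only in columns \emph{weakly to the right} of $c_a$, the opposite of what you wrote; consequently the assertion that the scan for $b$ in $a\ra T$ agrees with the scan for $b$ in $T$ on all columns strictly right of $c_a$ is false (take for instance $T$ with rows $1$ and $2\;5$ and $a=3$: the bump and the new cell both occur in column~$2$). A correct direct argument must track how the path of $b$ interacts with the already-disturbed columns $\ge c_a$, which is more delicate than your outline suggests. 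Your ``cleaner alternative'' via \cref{prop: relation b2w YCT and CT} is viable, but you should point to an explicit statement (the column monotonicity underlying Mason's proof that $G$ is a composition tableau in \cite[Section~4]{06Mason}) rather than say it is ``implicit''. One further imprecision: entry $i$ of $\hatQtab(\tau)$ is \emph{not} placed in the new cell of $\hatPtab$; by \textbf{P2} of \cref{alg: modified mason algotithm} only its column coincides with that cell. This does not affect your column-based argument, but the sentence should be fixed.
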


\begin{proof}
We first prove our assertion in the case where $\sigma$ is equal to $\sfrow(T) w_0$ for some $T \in \SYT_n$. 
Then, it holds that   $\Ptab(\sfrow(T) w_0) = T$.
Moreover, since $\sfrow(T) w_0$ is the immaculate reading word of $T$ defined in \cite[Definition 2.7]{18AHM},  it follows from \cite[Lemma 3.11, Lemma 3.12]{18AHM} that 
$\Des_L(\sfrow(T) w_0) = \Des_{\hscrS}(\hatPtab(\sfrow(T) w_0))$.

Now, consider the case where $\sigma$ is an arbitrary permutation in $\SG_n$. 
Let $T = \Ptab(\sigma)$.
Since $\Ptab(\sfrow(T)w_0) = \Ptab(\sigma)$, it follows that $\Des_L(\sigma) = \Des_L(\sfrow(T)w_0)$ and $\hatPtab(\sfrow(T)w_0) = \hatPtab(\sigma)$ by \cref{eq: due to Mason prop 2}.
Therefore, the desired result is obtained from  the above discussion.
\end{proof}

However, unlike the Robinson-Schensted-Knuth algorithm, 
the shapes of $\hatPtab(\sigma)$ and $\hatQtab(\sigma)$
are not necessarily the same. In fact, it holds 
that 
$$\lambda(\sh(\hatPtab(\sigma)))=\lambda(\sh(\hatQtab(\sigma))) =\sh(\Ptab(\sigma))$$
(for further details, see 
\cite[Section 3.3, Corollary 4.4, Corollary 4.5]{06Mason}).

\subsection{The corresponding Greene's theorem}\label{subsec: the corresponding Greene's theorem}

We begin by introducing Greene's theorem, which provides a characterization of how the shapes of insertion tableaux in the Robinson-Schensted-Knuth algorithm are determined.

Let $\sigma \in \SG_n$ be fixed throughout this subsection. 
In 1961, Schensted \cite{61Sch} discovered that the size of the first  row  of the insertion tableau $\Ptab(\sigma)$ is equal to the length of a longest  increasing  subsequence of $\sigma$.
In 1974, Greene \cite{74Gre} extended this result by fully characterizing the shape of $\Ptab(\sigma)$ in terms of  \emph{$k$-increasing subsequences}.

For each positive integer \(k\), we introduce the notion of \(k\)-increasing subsequences. Consider a sequence \(w = w_1 w_2 \ldots w_n\) of positive integers. 
By abuse of notation, we use \(m \in w\) to denote that \(m\) is an entry of \(w\) and \(m \notin w\) otherwise.
Let $\ell(w) := n$ and refer to it as the \emph{length} of $w$.
By convention, we denote $\emptyset$ as the unique sequence of length $0$.
We say that $w$ is \emph{increasing} if $\ell(w) = 0$, or $\ell(w) > 1$ and 
$w_1 < w_2 < \cdots < w_n$.
For $1 \leq k \leq n$, 
a subsequence $w'$ of $w$ is called \emph{$k$-increasing} (with a hyphen) if $w'$ contains no decreasing subsequence of length $k+1$,
Note that $w'$ is $k$-increasing if and only if it can be expressed as a disjoint union of $k$ increasing subsequences of $w$.
Let $\fraki_k(w)$ denote the length of a longest $k$-increasing subsequence of $w$.

\begin{example}
Let $w = 9 4 3 1 5 6 8 2 7$.
The subsequence $ 4156827$ of $w$ is $2$-increasing since it is a union of $2$ increasing subsequences  $4568$ and $127$ of $w$.
Also, one can easily check that it is a longest $2$-increasing subsequence of $w$ and thus $\fraki_2(w) = 7$.
\end{example}

From now on, we write each permutation 
in one-line notation, viewing it as a sequence of positive integers. The following result is due to Greene.

\begin{theorem}{\rm (\cite{74Gre})}\label{thm: Greene k-dec}
Let $\sigma \in \SG_n$ and $\lambda = \sh(\sfP(\sigma))$.
For $1 \leq k \leq n$, we have 
\[
\fraki_k(\sigma) = \lambda_1 + \lambda_2 + \cdots + \lambda_k. 
\]
\end{theorem}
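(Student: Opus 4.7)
The plan is to follow the classical two-step strategy: first establish that $\fraki_k$ is an invariant of Knuth equivalence, then reduce the computation to a convenient representative of the Knuth class of $\sigma$, namely the column-reading word $w_P$ of $P := \Ptab(\sigma)$.

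For the invariance step, suppose $u$ and $v$ are two words of equal length differing by a single elementary Knuth move $u = \cdots b a c \cdots \leftrightarrow v = \cdots b c a \cdots$ with $a < b \leq c$ (the dual move $acb \leftrightarrow cab$ with $a \leq b < c$ is handled symmetrically). Given a decomposition $S_1 \sqcup \cdots \sqcup S_k$ of a longest $k$-increasing subsequence of $u$ into $k$ increasing parts, I would produce a $k$-increasing subsequence of $v$ of the same length by a short case analysis on which of $a, b, c$ lies in which $S_j$. In the nontrivial cases one swaps the tails of two of the $S_j$'s at the position of the Knuth triple and checks that the resulting parts remain increasing in $v$. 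Combined with the classical fact that $\sigma$ is Knuth-equivalent to $w_P$, this yields $\fraki_k(\sigma) = \fraki_k(w_P)$, so it remains to compute $\fraki_k(w_P)$.

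For $w_P$ itself, both bounds are direct. In the column-reading word of $P$ (reading columns from left to right, bottom-to-top within each column), the entries of row $r$ of $P$ appear in $w_P$ in the order $P(r,1), P(r,2), \ldots, P(r,\lambda_r)$ and therefore form an increasing subsequence of length $\lambda_r$. Taking the first $k$ rows yields $k$ disjoint increasing subsequences of total length $\lambda_1 + \cdots + \lambda_k$, so $\fraki_k(w_P) \geq \lambda_1 + \cdots + \lambda_k$. Conversely, the entries of each column $c$ of $P$ appear in $w_P$ consecutively and in strictly decreasing order of value, forming a decreasing subsequence of length $\lambda'_c$. Since any $k$-increasing subsequence $S$ contains no decreasing subsequence of length $k+1$, $S$ meets each column in at most $\min(k, \lambda'_c)$ entries, whence
\[
|S| \le \sum_{c \ge 1} \min(k, \lambda'_c) \;=\; \lambda_1 + \lambda_2 + \cdots + \lambda_k.
\]

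The main obstacle is the Knuth-invariance step: while each individual case is short, the bookkeeping over all configurations of $a, b, c$ relative to the parts $S_j$ is the only part of the argument that is not essentially immediate. Once it is in place, the reduction to $w_P$ and the two counting bounds above close the proof. An alternative route that avoids Knuth equivalence entirely is to track $\fraki_k$ directly under one-letter RSK insertion and show that $\fraki_k$ grows by exactly one precisely when the new cell of $\Ptab$ lies in one of the first $k$ rows; this is elegant but requires even more careful bookkeeping.
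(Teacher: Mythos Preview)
The paper does not give its own proof of this statement; it is quoted verbatim from Greene's 1974 paper \cite{74Gre} and used as a black box throughout Section~3. So there is no ``paper's proof'' to compare against.

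Your sketch is a correct and standard route to Greene's theorem (essentially the argument in Fulton's \emph{Young Tableaux}, \S3, or Sagan's book): Knuth invariance of $\fraki_k$ reduces the problem to a reading word of $\Ptab(\sigma)$, and for that word the two bounds are the row/column count you wrote down. One small notational slip: the paper works in French notation, so in a standard Young tableau the entries of each column \emph{increase} from bottom to top. For your claim that column entries appear in $w_P$ consecutively in strictly decreasing order, you therefore want to read each column top-to-bottom (in French notation), not bottom-to-top as you wrote; equivalently, your description is correct in English notation. With that adjustment both bounds for $\fraki_k(w_P)$ go through exactly as you describe, and the Knuth-invariance step, while genuinely requiring the case analysis you flagged, is well documented in the references above.
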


The purpose of this subsection is to present an analogue of
\cref{thm: Greene k-dec} for $\sh(\hatPtab(\sigma))$.
For a sequence \(w = w_1 w_2 \ldots w_n\) of distinct positive integers and  $1 \leq k \leq n$, 
let $\rmInc_k(w)$ be the set of all $k$-tuples $u = (u^{(i)})_{1 \leq i \leq k}$ of increasing subsequences of $w$ such that 
\begin{itemize}
    \item if $i \neq j$, then $u^{(i)}$ and $u^{(j)}$ have no common entries and 
   \item $\sum_{1 \leq i \leq k} \ell(u^{(i)}) = \fraki_k(w)$.
\end{itemize}
For \(u = (u^{(i)})_{1 \leq i \leq k} \in \rmInc_k(w)\), we denote by the boldfaced \(\mathbf{u}\) the subsequence of \(w\) that consists of all entries in the subsequence \(u^{(i)}\) for all $i$'s.
For example, if $w = 9 4 3 1 5 6 8 2 7 \in \SG_9$ and 
$u = (4568, 127) \in \rmInc_2(w)$,   
then $\mathbf{u} = 4156827$.
One sees that $\mathbf{u}$ is a longest $k$-increasing subsequence of $w$ and  $\{\mathbf{u} \mid u \in \rmInc_k(w)\}$ is  the set of all longest $k$-increasing subsequences of $w$.

We introduce a total order \(\le_{\rmset}\) on the set of all finite subsets of \(\Z_{>0}\) by defining \(A \le_{\rmset} B\)  if one of the following conditions holds:
\begin{itemize}
    \item \(|A| < |B|\), 
    \item \(A = B\), 
    \item \(|A| = |B|\) and there exists \(1 \leq i \leq |A|\) such that \(a_j = b_j\) for all \(1 \leq j < i\) and \(a_i <  b_i\), where \(A = \{a_1 < a_2 < \cdots < a_{|A|}\}\) and \(B = \{b_1 < b_2 < \cdots < b_{|B|}\}\).
\end{itemize}

\begin{definition}\label{def: initial entry set}
Let $\sigma \in \SG_n$ and  $1 \leq k \leq n$.

\begin{enumerate}
\item 
    For $u = (u^{(i)})_{1 \leq i \leq k}  \in \rmInc_k(\sigma)$, we define the {\it initial entries set of $u$} by 
\[
\sfIES(u) := \{ u^{(i)}_1  \mid 1 \leq i \leq k\; \text{with $\ell(u^{(i)}) > 0$}\}.
\]
\item 
With respect to the order $\leq_{\rmset}$, 
we define  
$$\sfmIES_k(\sigma):=\max\{\sfIES(u) \mid u \in \rmInc_k(\sigma)\}$$
\end{enumerate}
\end{definition}
By convention, we set $\sfmIES_0(\sigma):= \emptyset$.
From the definition, it follows that \( |\sfmIES_k(\sigma)| = k \).

\begin{example}
Let $\sigma = 637$. Then   
$\rmInc_1(\sigma) = \{(67), (37)\}$, $\rmInc_2(\sigma) = \{(67, 3), (37, 6)\}$, and 
$\rmInc_3(\sigma) = \{(67, 3, \emptyset), (37, 6, \emptyset), (7,6,3)\}$. For these, we have   
\begin{align*}
&\sfIES((67))=\{6\}, \quad \sfIES((37)) = \{3\},\\ 
&\sfIES(((67, 3))=\{3,6\}, \quad \sfIES((37, 6))=\{3,6\}, \text{ and }\\
&\sfIES((67,3, \emptyset))=\{3, 6\}, \quad \sfIES((37, 6,\emptyset))=\{3, 6\}, \quad \sfIES((3,6,7))=\{3,6,7\}.
\end{align*}
It follows that 
$\sfmIES_1(\sigma) = \{6\}$, $\sfmIES_2(\sigma) = \{3, 6\}$, and $\sfmIES_3(\sigma) = \{3, 6, 7\}$.
\end{example}

We begin by proving that \(\sfmIES_k(\cdot)\) is constant on the Knuth equivalence classes. Let us briefly recall the Knuth equivalence relation.
For \(\sigma \in \SG_n\) and \(1 < i < n\), we write \(\sigma \overset{1}{\cong} \sigma s_i\) if
\[
\sigma(i+1) < \sigma(i-1) < \sigma(i) \quad \text{or} \quad 
\sigma(i) < \sigma(i-1) < \sigma(i+1).
\]
We write \(\sigma \overset{2}{\cong} \sigma s_{i-1}\) if
\[
\sigma(i) < \sigma(i+1) < \sigma(i-1) \quad \text{or} \quad 
\sigma(i-1) < \sigma(i+1) < \sigma(i).
\]
The Knuth equivalence is an equivalence relation \(\Keq\) on \(\SG_n\) defined by \(\sigma \Keq \rho\) if and only if there exist \(\gamma_1, \gamma_2, \ldots, \gamma_k \in \SG_n\) such that
\[
\sigma = \gamma_1 \overset{a_1}{\cong} \gamma_2 \overset{a_2}{\cong} \cdots \overset{a_{k-1}}{\cong} \gamma_k = \rho,
\]
where \(a_1, a_2, \ldots, a_{k-1} \in \{1, 2\}\). 
Note that \(\sigma \Keq \rho\) if and only if \(\Ptab(\sigma) = \Ptab(\rho)\). For more information, see \cite{05BB, 97Ful, 91Sag, 99Stanley}.

\begin{lemma}\label{lem: invariant property on knuth equiv class}
Let $\sigma, \rho \in \SG_n$.
If  $\sigma$ and $\rho$ are Knuth equivalent, 
then 
$\sfmIES_k(\sigma) = \sfmIES_k(\rho)$  for all $1 \leq k \leq n$.
\end{lemma}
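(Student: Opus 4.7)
The plan is to reduce the lemma to a single elementary Knuth move and then to show, for each such move, that the collection
\[
\{\sfIES(u) : u \in \rmInc_k(\cdot)\}
\]
is invariant. Since $\sfmIES_k$ is the $\leq_{\rmset}$-maximum of this collection, such invariance yields $\sfmIES_k(\sigma) = \sfmIES_k(\rho)$ for all $k$.

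By definition, $\Keq$ is generated by the elementary Knuth moves $\overset{1}{\cong}$ and $\overset{2}{\cong}$, so an induction on the length of a chain of moves connecting $\sigma$ and $\rho$ reduces us to the single-move case $\rho = \sigma s_j$. I would treat Type 1 with $\sigma(i+1) < \sigma(i-1) < \sigma(i)$ as the representative; the remaining subcases (the other Type 1 subcase and the two Type 2 subcases) are handled by parallel arguments. Writing $a := \sigma(i+1)$, $b := \sigma(i-1)$, $c := \sigma(i)$, we have $a < b < c$, and $\rho$ places $b, a, c$ at positions $i-1, i, i+1$. The optimality of the $k$-decomposition of $\rho$ we construct will be automatic, since $\fraki_k$ is Knuth-invariant by Greene's theorem (\cref{thm: Greene k-dec}).

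The core construction, for each $u = (u^{(1)},\ldots,u^{(k)}) \in \rmInc_k(\sigma)$, is to exhibit $u' \in \rmInc_k(\rho)$ with $\sfIES(u') = \sfIES(u)$. Since $a < b$ and $a < c$, the element $a$ cannot share an increasing chain with $b$ or $c$ in either permutation, so within $u$ the only pair among $\{a,b,c\}$ that may share a chain is $\{b, c\}$. In the representative case, I would take $u'$ to have exactly the same value partition as $u$. A short position check --- based on the observation that every chain-neighbor of $a, b, c$ distinct from $a, b, c$ themselves must lie at a position in $\{1,\ldots,i-2\} \cup \{i+2,\ldots,n\}$, forced by the size-relations $a < b < c$ together with disjointness of chains --- confirms that each chain of $u'$ is both position- and value-increasing when re-read at the positions of $\rho$. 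Since chains retain their value sets, the initial entries (which coincide with chain minima because chains are increasing) are preserved, so $\sfIES(u') = \sfIES(u)$.

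The subtler subcases are the Type 1 subcase $\sigma(i) < \sigma(i-1) < \sigma(i+1)$ and the two Type 2 subcases: there the pair $\{a, c\}$ (rather than $\{b, c\}$) may share a chain of $u$ in $\sigma$, but in $\rho$ it cannot, so naive value-partition preservation fails. The main obstacle of the proof is to exhibit, in this scenario, a global rearrangement producing $u' \in \rmInc_k(\rho)$ with the same set of chain minima; the required rearrangement is genuinely more than a local swap (for instance, the decomposition $(\{1,5\}, \{2,3,4\}) \in \rmInc_2(\sigma)$ for $\sigma = 23154$ corresponds to $(\{1,4\}, \{2,3,5\}) \in \rmInc_2(\rho)$ for $\rho = 23514$, with $c = 5$ transferred from the $a$-chain to the $b$-chain and $w = 4$ transferred the other way). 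One must track such cascading transfers between chains and verify in each configuration that the set of chain minima is unchanged. The reverse inclusion follows by applying the symmetric argument from $\rho$ back to $\sigma$, completing the equality of the two collections of $\sfIES$-sets and hence of their $\leq_{\rmset}$-maxima.
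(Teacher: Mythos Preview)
Your central claim---that the collection $\{\sfIES(u) : u \in \rmInc_k(\cdot)\}$ is invariant under elementary Knuth moves---is false, so the strategy cannot be completed as stated. A minimal counterexample: take $\sigma = 2314$ and $\rho = 2134$, which are related by the Type~1 move at $i=2$ (this is exactly your ``subtler subcase'' $\sigma(i) < \sigma(i-1) < \sigma(i+1)$ read from the $\rho$ side). For $k=1$ one has $\fraki_1 = 3$, and $\rmInc_1(\sigma) = \{(234)\}$ while $\rmInc_1(\rho) = \{(234),(134)\}$. Hence the collections of initial-entry sets are $\{\{2\}\}$ and $\{\{2\},\{1\}\}$ respectively: they differ, and there is no $u' \in \rmInc_1(\sigma)$ with $\sfIES(u') = \{1\}$. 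The ``global rearrangement'' you propose to find in this situation simply does not exist; the cascading-transfer idea from your $23154$ example relies on the availability of a spare chain element to trade, and in the $k=1$ setting there is none.

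What actually survives, and what the paper exploits, is the weaker inequality-based argument. In the representative orientation $\sigma = \ldots yzx\ldots$, $\rho = \ldots yxz\ldots$ with $x<y<z$, one has $\rmInc_k(\sigma) \subseteq \rmInc_k(\rho)$ outright, giving $\sfmIES_k(\sigma) \leq_{\rmset} \sfmIES_k(\rho)$ for free. For the reverse inequality one fixes $u \in \rmInc_k(\rho)$ realizing the maximum and builds $v \in \rmInc_k(\sigma)$ with $\sfIES(v) \geq_{\rmset} \sfIES(u)$, \emph{not} with equality: if some chain of $u$ contains $xz$ and $y$ lies in no chain, one replaces $x$ by $y$ in that chain, which can only raise the initial entry. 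The point is that you only need to beat the maximum, not to reproduce every $\sfIES$-value, and the move $x \rightsquigarrow y$ is exactly what makes this work in the case your argument breaks down.
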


\begin{proof}
Assume that \(\sigma\) and \(\rho\) are Knuth equivalent and let \(1 \leq k \leq n\). 
To  prove 
the assertion, it suffices to consider the situation where \(\sigma \overset{a}{\cong} \rho\) with \(a \in \{1, 2\}\) and \(\ell(\sigma) > \ell(\rho)\). 
 The following cases are possible:

{\it Case 1}. $\sigma \overset{1}{\cong} \rho$, that is, 
\(\sigma = \ldots yzx \ldots\) and \(\rho = \ldots yxz \ldots\), where \(x < y < z\). 

{\it Case 2}. $\sigma \overset{2}{\cong} \rho$, that is, \(\sigma = \ldots zxy \ldots\) and \(\rho = \ldots xzy \ldots\), where \(x < y < z\).

\noindent
We  deal with  only {\it Case 1}, as {\it Case 2} can be handled in a similar manner. 
 
Since \(\rmInc_k(\sigma) \subseteq \rmInc_k(\rho)\), it immediately follows that \(\sfmIES_k(\sigma) \leq_{\rmset} \sfmIES_k(\rho)\).
Therefore, it remains to show that \(\sfmIES_k(\sigma) \geq_{\rmset} \sfmIES_k(\rho)\). 
Choose \( u = (u^{(i)})_{1 \leq i \leq k} \in \rmInc_k(\rho) \) such that \( \sfIES(u) = \sfmIES_k(\rho) \).
If there is no index \( 1 \leq t \leq k \) with \( x, z \in u^{(t)} \), then \( u \in \rmInc_k(\sigma) \), and thus \( \sfmIES_k(\sigma) \geq_{\rmset} \sfmIES_k(\rho) \). 
Otherwise,  let $1 \leq t \leq k$ be the index such that $x, z \in u^{(t)}$.
Write $u^{(t)} = A \; xz \; B$.
There are two subcases.
\begin{itemize}
    \item 
Suppose \(y \in u^{(s)}\) for some \(1 \leq s \leq k\). In this case, \(s \neq t\). Write \(u^{(s)} = C \, y \, D\) and define \(v = (v^{(i)})_{1 \leq i \leq k}\) as follows:
\[
v^{(i)} := \begin{cases}
 A \, x \, D   & \text{if } i = t, \\
 C \, yz \, B   & \text{if } i = s, \\
 u^{(i)}   & \text{otherwise.}
\end{cases}
\]
By \cref{thm: Greene k-dec}, one sees that \(v \in \rmInc_k(\sigma)\); in addition, \(\sfIES(v) = \sfIES(u) = \sfmIES_k(\rho)\). Therefore, \(\sfmIES_k(\sigma) \geq_{\rmset} \sfmIES_k(\rho)\), as required.

\item Suppose \(y \notin u^{(s)}\) for any \(1 \leq s \leq k\). 
Let \(v = (v^{(i)})_{1 \leq i \leq k}\) be defined as:
\[
v^{(i)} := \begin{cases}
 A \, yz \, B   & \text{if } i = t, \\
 u^{(i)}   & \text{otherwise.}
\end{cases}
\]
By \cref{thm: Greene k-dec}, one sees that \(v \in \rmInc_k(\sigma)\); in addition, \(\sfIES(v) \geq_{\rmset} \sfIES(u) = \sfmIES(\rho)\). 
Therefore, we obtain that \(\sfmIES_k(\sigma) \geq_{\rmset} \sfmIES_k(\rho)\).
\end{itemize}
\end{proof}

\begin{lemma}\label{lem: first col determine}
For $\sigma \in \SG_n$,
$\sfmIES_{l}(\sigma)$ is equal to the set of all entries in the first column of $\hatPtab(\sigma)$, 
 where $l$ denotes the length of  $\sh(\hatPtab(\sigma))$.
\end{lemma}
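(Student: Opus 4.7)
The plan is to exploit Knuth invariance to reduce the statement to a single canonical representative per Knuth class, and then verify both sides by direct computation. By \cref{lem: invariant property on knuth equiv class}, $\sfmIES_l(\sigma)$ depends only on the Knuth class of $\sigma$; by \eqref{eq: due to Mason prop 2}, $\hatPtab(\sigma)$ itself is a Knuth invariant, so its set of first-column entries is too. Thus it suffices to verify the lemma on a single representative of the Knuth class of $\sigma$. I would take the \emph{column reading word} $\sigma_T$ of $T := \Ptab(\sigma)$, obtained by reading column $1$ of $T$ from top to bottom, then column $2$ from top to bottom, and so on. A classical property of RSK gives $\Ptab(\sigma_T) = T$, so $\sigma_T$ is Knuth equivalent to $\sigma$, and the argument reduces to showing that both quantities equal $\{T(c,1) : 1 \leq c \leq l\}$.

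The first column of $\hatPtab(\sigma_T)$ is computed as follows. The initial $l$ letters of $\sigma_T$ are $T(l,1) > T(l-1,1) > \cdots > T(1,1)$, a strictly decreasing sequence; iterating \cref{alg: insertion algorithm} on this prefix routes every step through \textbf{P3}, producing after $l$ insertions a single-column YCT whose entries from bottom to top are exactly $T(1,1), \ldots, T(l,1)$. Since $\ell(\sh(\hatPtab(\sigma_T))) = \ell(\lambda(\sh(\hatPtab(\sigma_T)))) = \ell(\sh(T)) = l$ and \cref{alg: insertion algorithm} can only add rows (never remove), each of the remaining $n - l$ insertions is forced to terminate via Case 1 of \textbf{P2} rather than \textbf{P3}. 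Because Cases 1 and 2 only modify cells in a column $d+1 \geq 2$, the first column is frozen at $\{T(c,1) : 1 \leq c \leq l\}$ from the $l$-th insertion onward. For the other side, \cref{thm: Greene k-dec} gives $\fraki_l(\sigma_T) = \lambda_1 + \cdots + \lambda_l = n$, so every $u \in \rmInc_l(\sigma_T)$ partitions $\sigma_T$ into $l$ nonempty increasing subsequences. The strictly decreasing prefix of length $l$ must then occupy $l$ distinct subsequences of $u$, and, being positioned leftmost in $\sigma_T$, each of these letters is the initial entry of its subsequence. Hence $\sfIES(u) \supseteq \{T(c,1) : 1 \leq c \leq l\}$; equality of cardinalities then yields $\sfmIES_l(\sigma_T) = \{T(c,1) : 1 \leq c \leq l\}$, matching the first-column computation.

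The step I expect to need the most care is the row-count analysis for the last $n-l$ insertions into $\hatPtab(\sigma_T)$. One must combine three observations: (i) the number of rows is non-decreasing under \cref{alg: insertion algorithm}; (ii) the final row count is pinned to $l$ by the shape identity $\lambda(\sh(\hatPtab(\cdot))) = \sh(\Ptab(\cdot))$ recalled at the end of \cref{subsec: an analog of RSK}; and (iii) Cases 1 and 2 of \textbf{P2} never touch column $1$. Items (i) and (iii) are immediate from the statement of \cref{alg: insertion algorithm}, while (ii) is the one point that truly uses the compatibility of Mason's insertion with classical RSK; once these three are in place, the rest is bookkeeping.
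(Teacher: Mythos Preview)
Your proof is correct and follows the same overarching strategy as the paper: reduce to a single Knuth representative via \cref{lem: invariant property on knuth equiv class} and \eqref{eq: due to Mason prop 2}, then compute both sides directly. The tactical choices differ, however. The paper takes the representative $\sfrow(T)w_0$ (reading rows of $T$ left to right from the top), whereas you take the column reading word $\sigma_T$. For the $\sfmIES_l$ side, both arguments are essentially the same pigeonhole: the $l$ first-column entries of $T$ form a decreasing block at the start of the chosen word, so in any decomposition $u\in\rmInc_l$ into $l$ increasing subsequences covering all of $[n]$, each such entry must head its own subsequence. For the $\hatPtab$ side, the paper simply asserts that the first column of $\hatPtab(\sfrow(T)w_0)$ coincides with that of $T$ (a fact implicit in \cite{18AHM}), while you derive the analogous statement for $\sigma_T$ from scratch by tracing \cref{alg: insertion algorithm}: the decreasing prefix of length $l$ forces $l$ invocations of \textbf{P3}, after which the row count is pinned at $l$ by the shape identity, so no further \textbf{P3} occurs and column~$1$ is frozen. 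Your route is thus slightly more self-contained at the cost of a short insertion trace; the paper's is shorter but leans on an external fact.
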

\begin{proof}
First, consider the case where \(\sigma = \sfrow(T)w_0\) for some \(T \in \SYT(\lambda)\), with \(\lambda\) being a partition of \(n\). Let \(l = \ell(\lambda)\). 
In this setting, \(\Ptab(\sigma) = T\)  and the set of all entries in the first column of \(\hatPtab(\sigma)\) is precisely the set of entries in the first column of \(T\).
Thus, to  prove the assertion, it suffices to verify that \(\sfmIES_l(\sigma) = \{T(1,1), T(2,1), \ldots, T(l,1)\}\). 
This follows from the observation that 
\[T(i, 1) \in \sfIES(u) \text{ for all \(u \in \rmInc_l(\sigma)\) and \(1 \leq i \leq l\)}. \]
Otherwise, there exist \(u \in \rmInc_l(\sigma)\) and \(1 \leq i \leq l\) such that \(T(i, 1) \notin \sfIES(u)\). Since \(T(j, k) > T(i, 1)\) for all \(i < j \leq l\) and \(1 \leq k \leq \lambda_j\), it follows that \(T(i, 1) \notin \mathbf{u}\). 
Consequently, \(\mathbf{u} \neq \sigma\), which contradicts the assumption that  \(u \in \rmInc_l(\sigma)\).

Let \(\sigma\) be an arbitrary permutation in \(\SG_n\) and let \(T := \Ptab(\sigma)\). 
Then, 
\[
\hatPtab(\sigma) = \hatPtab(\sfrow(T)w_0) \quad \text{and} \quad \sfmIES_l(\sigma) = \sfmIES_l(\sfrow(T)w_0),
\]  
where the first equality follows from \cref{eq: due to Mason prop 2}, while the second is a consequence of \cref{lem: invariant property on knuth equiv class}. 
Now, the desired assertion can be obtained by combining these equalities with the preceding discussion.
\end{proof}

For a partition $\lambda$, we denote 
 the skew partition whose Young diagram is obtained by rotating $\tyd(\lambda)$ by $180^\circ$ by $\lambda^\circ$.

\begin{lemma}\label{lem: restriction to be longest}
Let \(\lambda = (\lambda_1, \lambda_2, \ldots, \lambda_l)\) be a partition of \(n\) and let \(T \in \SYT(\lambda^\circ)\).  
For $1\le k\le l$ with \(\lambda_k = \lambda_1\), the following hold: 
\begin{enumerate}[label = {\rm (\arabic*)}]
    \item For \(u = (u^{(i)})_{1 \leq i \leq k} \in \rmInc_k(\sfcol(T))\), \(\sfIES(u)\) consists of the entries in the first row of \(T\).
\item   
$\sfmIES_k(\sfcol(T)) = \{T(1, j) \mid l-k+1 \leq j \leq l\}.$
\end{enumerate}
\end{lemma}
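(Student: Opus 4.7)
The plan is to first determine $\fraki_k(\sfcol(T))$ and characterize the common structure of all longest $k$-increasing subsequences, and then derive parts (1) and (2) from that structural information. The crucial combinatorial observation is that entries from any single row of $T$ appear in strictly \emph{decreasing} order within $\sfcol(T)$, because each row of $T$ is increasing from left to right while $\sfcol(T)$ reads columns from right to left. Consequently, every increasing subsequence of $\sfcol(T)$ contains at most one entry from each row of $T$, yielding the upper bound $\fraki_k(\sfcol(T)) \le kl$. The hypothesis $\lambda_k = \lambda_1$ guarantees that the rightmost $k$ columns of $\lambda^\circ$ are full (each of length $l$), so taking these $k$ columns as the components produces a $k$-tuple of pairwise disjoint increasing subsequences of $\sfcol(T)$ of total length $kl$. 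Hence $\fraki_k(\sfcol(T)) = kl$, and tightness forces every $u = (u^{(i)})_{1 \le i \le k} \in \rmInc_k(\sfcol(T))$ to consist of $k$ components of length exactly $l$, each containing one entry from every row of $T$.

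For part (1), I would argue by contradiction. Suppose some component $u^{(i)}$ has initial entry $T(r, c)$ with $r > 1$, and let $T(r-1, c'')$ denote the unique row-$(r-1)$ entry of $u^{(i)}$. Because $\lambda$ is a partition, the column range of row $r-1$ in $\lambda^\circ$ is contained in that of row $r$. If $c'' \ge c$, then the column-$c''$ block of $\sfcol(T)$ weakly precedes the column-$c$ block, so the position of $T(r-1, c'')$ in $\sfcol(T)$ is strictly less than that of $T(r, c)$, contradicting the initiality of $T(r, c)$; when $(r-1, c) \notin \lambda^\circ$, the partition property forces the minimum column of row $r-1$ to exceed $c$, so necessarily $c'' > c$ and one lands in this subcase. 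Otherwise $c'' < c$: then $T(r-1, c'')$ lies strictly later in $\sfcol(T)$ than $T(r, c)$, so the increasing condition demands $T(r-1, c'') > T(r, c)$. But the containment of column ranges gives $(r, c'') \in \lambda^\circ$, and the SYT conditions on $\lambda^\circ$ then yield $T(r-1, c'') < T(r, c'') < T(r, c)$, a contradiction. Hence $r = 1$, so every initial entry of $u$ is a first-row entry of $T$.

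Part (2) is then immediate from part (1): $\sfIES(u)$ is a $k$-subset of the first-row entries of $T$, which form an increasing sequence of values from left to right, and the $\le_\rmset$-maximum $k$-subset is the set of the $k$ rightmost entries, coinciding with $\{T(1, j) \mid l-k+1 \le j \le l\}$; this maximum is realized by the explicit $k$-tuple of rightmost full columns constructed in the lower-bound step.

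The main obstacle is the exchange argument in part (1); the case $c'' < c$ together with $(r-1, c) \notin \lambda^\circ$ requires a careful use of the partition monotonicity of $\lambda$ and of the SYT conditions on cells of $\lambda^\circ$ to uniformly force the comparison $T(r-1, c'') < T(r, c)$.
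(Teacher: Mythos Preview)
Your overall strategy is sound and in fact more elementary than the paper's: where the paper appeals to Greene's theorem (via the fact that $\sh(\Ptab(\sfcol(T)))=\lambda$) to deduce that every component $u^{(i)}$ has length $\lambda_1$, you obtain the same conclusion from the direct pigeonhole observation ``an increasing subsequence of $\sfcol(T)$ meets each row of $T$ at most once'' together with explicit column witnesses. You also supply a full justification of why each maximal increasing subsequence must begin in row~$1$, which the paper states without argument.

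There is one slip. Throughout your first paragraph you write $l$ for the number of rows of $\lambda^\circ$ and for the length of a full column, but in the setting used here $\lambda^\circ$ has $\lambda_1$ rows and $l=\ell(\lambda)$ columns, with column $j$ of height $\lambda_{l+1-j}$. (Indeed, the formula $\sfmIES_k(\sfcol(T))=\{T(1,j)\mid l-k+1\le j\le l\}$ in part~(2) already forces row~$1$ to contain the cells $(1,l-k+1),\dots,(1,l)$, and the column witnesses in the paper's proof of~(2) each have length $\lambda_1$.) Hence your upper bound should read $\fraki_k(\sfcol(T))\le k\lambda_1$, and it is exactly the hypothesis $\lambda_k=\lambda_1$ that makes the rightmost $k$ columns---columns $l-k+1,\dots,l$, of heights $\lambda_k,\dots,\lambda_1$---full of length $\lambda_1$. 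If $\lambda^\circ$ literally had $l$ rows, the hypothesis $\lambda_k=\lambda_1$ would \emph{not} force the rightmost $k$ columns to be full (e.g.\ $\lambda=(3,3,1)$, $k=2$), so this is not merely cosmetic. With $l$ replaced by $\lambda_1$ everywhere in your first paragraph, the argument is complete; your case analysis for part~(1) only uses the containment ``the column range of row $r-1$ lies inside that of row $r$'', which does hold in $\lambda^\circ$, so nothing further needs to change there.
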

\begin{proof}
It is well known in the literature that the shape of \(\Ptab(\sfcol(T))\) is \(\lambda\) (for instance, see \cite{97Ful}).

(1) By \cref{thm: Greene k-dec}, an increasing subsequence of \(\sfcol(T)\) has length at most \(\lambda_1\) and the length of a longest \(k\)-increasing subsequence of \(\sfcol(T)\) is \(k\lambda_1\). Combining these properties, we deduce that the length of \(u^{(i)}\) is equal to \(\lambda_1\) for every \(1 \leq i \leq k\).
On the other hand, if an increasing subsequence of \(\sfcol(T)\) has length \(\lambda_1\), then it starts with an entry in the first row of \(T\). Therefore, we obtain the desired result.

(2)  Let \(X\) denote the set of all entries in the first row of \(T\).
By (1), we have 
\[
\{\sfIES(u) \mid u \in \rmInc(\sfcol(T))  \} \subseteq \{S \subseteq X \mid |S| = k\}.
\]
Combining this inclusion with 
$$\{T(1, j) \mid l-k+1 \leq j \leq l\} = \max\{S \subseteq X \mid |S| = k \} \text{ (with respect to $\geq_{\rmset}$)},$$
it suffices to show that there exists $u \in \rmInc_k(\sfcol(T))$ such that
\[
\sfIES(u) = \{T(1, j) \mid l-k+1 \leq j \leq l\}.
\]
The existence of  $u$ can be shown by considering a $k$-tuple $v = (v^{(i)})_{1 \leq i \leq k}$ of subsequences of $\sfcol(T)$, where  \(v^{(i)}\) is  the increasing subsequence of \(\sfcol(T)\) consisting of all entries in the \((l-k+i)\)th column of \(T\). 
\end{proof}

We are now ready to present the main result of this section.

\begin{theorem}\label{thm: an analoge of Greene}
Let \(\sigma \in \SG_n\) and let \(\lambda = (\lambda_1, \lambda_2, \dots, \lambda_l) = \sh(\Ptab(\sigma))\).
\begin{enumerate}[label = {\rm (\arabic*)}]
    \item 
    $\emptyset = \sfmIES_0(\sigma) \subsetneq \sfmIES_1(\sigma) \subsetneq \cdots \subsetneq
\sfmIES_{l}(\sigma)$.

    \item 
Let \(\sh(\hatPtab(\sigma)) = (\alpha_1, \alpha_2, \dots, \alpha_l)\) and let \(\sfmIES_l(\sigma) = \{x_1 < x_2 < \cdots < x_l\}\). Then, for each \(1 \leq k \leq l\), \(\alpha_k = \lambda_{i_k}\), where \(i_k\) is the smallest index \(1 \leq t \leq l\) such that \(x_k \in \sfmIES_{t}(\sigma)\).
\end{enumerate}
\end{theorem}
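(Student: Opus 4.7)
By \cref{lem: invariant property on knuth equiv class} and \cref{eq: due to Mason prop 2}, both $\sfmIES_k(\sigma)$ and $\hatPtab(\sigma)$ depend only on the Knuth class of $\sigma$, so I would fix $\sigma = \sfrow(T)w_0$ with $T = \Ptab(\sigma) \in \SYT(\lambda)$; in this form $\sigma$ is the concatenation of the rows of $T$ read from top to bottom. By \cref{lem: first col determine}, $\sfmIES_l(\sigma) = \{x_1, \ldots, x_l\}$ where $x_1 < \cdots < x_l$ is the first column of $\hatPtab(\sigma)$. Let $\alpha_k$ denote the length of the row of $\hatPtab(\sigma)$ whose first-column entry is $x_k$. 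Since $\lambda(\sh(\hatPtab(\sigma))) = \lambda$, the multiset $\{\alpha_1, \ldots, \alpha_l\}$ equals $\{\lambda_1, \ldots, \lambda_l\}$.

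The theorem reduces to the greedy characterization
\[
\sfmIES_k(\sigma) \;=\; \max_{\leq_{\rmset}} \Bigl\{ S \subseteq \{x_1, \ldots, x_l\} \,:\, |S| = k,\ \textstyle\sum_{j\,:\, x_j \in S} \alpha_j \,=\, \lambda_1 + \cdots + \lambda_k \Bigr\}
\]
for each $1 \le k \le l$. Write $S_k$ for this maximiser. Part~(1) follows quickly: the $\alpha$-values inside $S_{k+1}$ are the top-$(k{+}1)$ of the multiset $\{\lambda_1, \ldots, \lambda_l\}$, which contains the top-$k$ as a sub-multiset, and one checks (noting that the tie-breaking rule is "largest $x_j$ first" and that the $x_j$'s increase with $j$) that $S_k \subsetneq S_{k+1}$. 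Part~(2) follows because $i_k$ is precisely the rank of $\alpha_k$ when the pairs $(\alpha_j, x_j)$ are sorted in lexicographically decreasing order; this rank equals the position in $(\lambda_1, \ldots, \lambda_l)$ of the $\alpha$-value $\alpha_k$, giving $\alpha_k = \lambda_{i_k}$.

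Proving the greedy formula has two halves. For achievability ($\geq_{\rmset}$), one constructs, for each admissible $S$, an explicit $u \in \rmInc_k(\sigma)$ whose components start at the entries of $S$ with the prescribed lengths $\alpha_{j(y)}$ for $y \in S$. One subtle point, already visible for $\sigma = 2413$ (where $\hatPtab(\sigma)$ has a row $1\,4$ which is not an increasing subsequence of $\sigma$), is that the components of $u$ must be threaded inside $\sigma$ rather than read off from $\hatPtab(\sigma)$; this can be done using \cref{thm: Greene k-dec} together with the insertion paths of $\Ptab(\sigma)$. For optimality ($\leq_{\rmset}$), one shows that every optimal $u \in \rmInc_k(\sigma)$ satisfies (a) all of its starting entries lie in $\{x_1, \ldots, x_l\}$, and (b) the sum of the corresponding $\alpha_j$'s is $\lambda_1 + \cdots + \lambda_k$. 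Point~(a) would be proved by the exchange technique used in the proof of \cref{lem: invariant property on knuth equiv class}, swapping components of $u$ against subsequences starting at entries of $\sfmIES_l$ without decreasing $\sfIES(u)$. Point~(b) and the full achievability are handled by a peeling induction on $l$: \cref{lem: restriction to be longest} furnishes the base when the top $k$ parts of $\lambda$ coincide, and in general the top $\max\{k : \lambda_k = \lambda_1\}$ components of any optimal decomposition are peeled off, reducing to a shorter permutation.

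The main obstacle will be the optimality half, specifically calibrating the exchange argument so that the tie-breaking rule "largest $x_j$ first among equal $\alpha_j$" survives each exchange. This is where the strict increase of the first column of $\hatPtab(\sigma)$ from bottom to top must be leveraged, and where \cref{lem: restriction to be longest} plays its essential role.
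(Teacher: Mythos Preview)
Your optimality claim (b) is false, and this is where the argument breaks. Take $T\in\SYT((2,2,1))$ with rows $(1,2)$, $(3,5)$, $(4)$ from bottom to top, so that $\sigma=\sfrow(T)w_0=43512$. One computes that $\hatPtab(\sigma)$ has shape $(2,1,2)$ with first column $\{1,3,4\}$, giving $(x_1,x_2,x_3)=(1,3,4)$ and $(\alpha_1,\alpha_2,\alpha_3)=(2,1,2)$. Now $u=(35,12)\in\rmInc_2(\sigma)$ satisfies your condition~(a), since $\sfIES(u)=\{1,3\}=\{x_1,x_2\}$; but $\alpha_1+\alpha_2=3\neq 4=\lambda_1+\lambda_2$. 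So the set of $\sfIES(u)$ for $u\in\rmInc_k(\sigma)$ with all starting entries in the first column is \emph{strictly larger} than your admissible set, and (a) together with (b) does not deliver the bound $\sfmIES_k(\sigma)\leq_{\rmset}S_k$. You could retreat to proving~(b) only for a tuple $u$ realising the maximum $\sfIES$, but then you must already know which $x_j$ lie in $\sfmIES_k(\sigma)$ and what their $\alpha_j$ are --- precisely the content of part~(2). Neither the exchange moves of \cref{lem: invariant property on knuth equiv class} nor the peeling on $l$ you sketch provides an independent handle on this. The achievability half is likewise only asserted; your own example $\sigma=2413$ already shows that rows of $\hatPtab(\sigma)$ need not be subsequences of $\sigma$, and ``insertion paths of $\Ptab(\sigma)$'' is not by itself a construction of the required $u$.

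The paper avoids this circularity by picking a different Knuth representative: it takes $\sigma=\sfcol(T)$ for $T\in\SYT(\lambda^\circ)$ and proves the identity $\sfmIES_k(\sfcol(T))=\sfmIES_k(\sfcol(T^{(k)}))$, where $T^{(k)}$ keeps only the rightmost $k$ columns of $T$. The proof of this identity is the technical core: from any $u\in\rmInc_k(\sfcol(T))$ one builds $v\in\rmInc_k(\sfcol(T^{(k)}))$ with $\sfIES(u)\leq_{\rmset}\sfIES(v)$ by rerouting the tails of the components along columns of $T$. The reason $\sfcol$ succeeds where $\sfrow(\,\cdot\,)w_0$ does not is that $\sfcol(T^{(k)})$ is literally a prefix of $\sfcol(T)$, so the first columns of the tableaux $\hatPtab(\sfcol(T^{(k)}))$ are nested as $k$ grows; \cref{lem: first col determine} then identifies these first columns with $\sfmIES_k(\sigma)$, yielding~(1) immediately, and part~(2) follows by comparing the shapes of $\hatPtab(\sfcol(T^{(k-1)}))$ and $\hatPtab(\sfcol(T^{(k)}))$.
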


\begin{proof}
Before proving the assertions, we recall that there exists \(T \in \SYT(\lambda^\circ)\) such that \(\Ptab(\sfcol(T)) = \Ptab(\sigma)\). For \(1 \leq k \leq l\), let \(T^{(k)}\) denote the subfilling of \(T\) consisting of the first \(k\) columns from the right. Clearly, \(T^{(k)}\) is a Young tableau of shape \((\lambda_1, \lambda_2, \dots, \lambda_k)^\circ\).

(1) 
By \cref{lem: invariant property on knuth equiv class}, it suffices to prove the assertion for \(\sigma = \sfcol(T)\). To begin with, we will show that for every \( 1 \leq k \leq l \),
\begin{align}\label{eq: I_k on T and T[1:k]}
\sfmIES_k(\sfcol(T)) = \sfmIES_k(\sfcol(T^{(k)})).
\end{align}
Assume first that $\lambda_k = \lambda_1$. 
Now, applying \cref{lem: restriction to be longest}(2) to both \(\sfcol(T)\) and \(\sfcol(T')\) shows that 
$$\sfmIES_k(\sfcol(T)) = \{ T(1, j) \mid l-k+1 \leq j \leq l \} = \sfmIES_k(\sfcol(T^{(k)})).$$

From now on, assume that $\lambda_k < \lambda_1$. 
Since $\rmInc_k(\sfcol(T)) \supseteq \rmInc_k(\sfcol(T^{(k)}))$, it follows that 
$$\sfmIES_k(\sfcol(T)) \geq_{\rmset} \sfmIES_k(\sfcol(T^{(k)})).$$
Therefore, it suffices to verify that \(\sfmIES_k(\sfcol(T)) \leq_{\rmset} \sfmIES_k(\sfcol(T^{(k)}))\).
This proceeds in three steps as follows:

{\it Step 1}. Let $u = (u^{(i)})_{1 \leq i \leq k} \in \rmInc_k(\sfcol(T))$ be given.
We show that 
each $u^{(i)}$ contains at least one element from the ($m+1$)st row in $T$, where $m=\lambda_1 - \lambda_k$.
Let $X$ be the set of entries in the  first $m$ rows of $T$, that is,
\begin{align*}
X = \{T(i, j) 
\mid (i, j) \in \tyd(\lambda^\circ) \text{ with } 1 \leq i \leq m\}.
\end{align*}
Let $T'$ be the subfilling of $T$ obtained by removing all elements in $X$.
Consider the $k$-tuple ${\hat u} = ({\hat u}^{(i)})_{1 \leq i \leq k}$, where ${\hat u}^{(i)}$ is the sequence obtained from $u^{(i)}$ by removing all elements in $X$.
By its definition,
$\mathbf{\hat{u}}$ is a $k$-increasing subsequence of $\sfcol(T')$ and 
\begin{equation*}
\ell(\mathbf{\hat{u}}) \geq \ell(\mathbf{u}) - |X| = k \lambda_k = \fraki_k(\sfcol(T')).
\end{equation*}
Combining this inequality with 
\cref{thm: Greene k-dec} yields that $\ell(\mathbf{\hat{u}}) = \fraki_k(\sfcol(T'))$, that is, $\hat{u} \in \rmInc_k(\sfcol(T'))$.
Therefore,
by \cref{lem: restriction to be longest}(1), 
\begin{align*}
\sfIES(\hat{u}) \subseteq \{T(m+1, j) \mid (m+1, j) \in \tyd(\lambda^\circ) \}.
\end{align*}
As a consequence,  we have
\[
u^{(i)} = A^{(i)} \; T(m+1, c_i) \; B^{(i)} \quad (1 \leq i \leq k),
\]
where $T(m+1, c_i)$ is the initial entry of $\hat{u}^{(i)}$. 
By suitably rearranging the indices, we may assume that \( c_1 < c_2 < \cdots < c_k \).

{\it Step 2}. 
We show that 
\begin{align}\label{eq: prop of Ai}
    X = \{x \in A^{(i)} \mid 1 \leq i \leq k \}.
\end{align}
To prove this equality, it suffices to show that for each \( 1 \leq i \leq k \), \( A^{(i)} \) consists of elements that belong to \( X \), while \( B^{(i)} \) contains no elements from \( X \).
If the cell $C \in \tyd(\lambda^\circ)$ is strictly right and weakly above of $(m+1, c_i)$, then $T(C) > T(m+1, c_i)$, and thus $T(C) \notin A^{(i)}$.
This shows that $A^{(i)}$ consists only of elements in $X$.
On the other hand, if the cell $C \in \tyd(\lambda^\circ)$ is strictly left and strictly below of $(m+1, c_i)$, then $T(C) < T(m+1, c_i)$, and thus $T(C) \notin B^{(i)}$.
This shows that $B^{(i)}$ does not contain any element in $X$.

{\it Step 3}. 
Define a \( k \)-tuple \( v = (v^{(i)})_{1 \leq i \leq k} \) of sequences by
\[
v^{(i)} :=  A^{(i)} \,T(m+1, l-k+i)\, T(m+2, l-k+i)\, \dots \,T(\lambda_1, l-k+i).
\]
Since \(c_i \leq l-k+i\), we have  
\[
T(m+1, l-k+i) \geq T(m+1, c_i),
\]  
which implies that \(v^{(i)}\) is an increasing sequence.
And, $\mathbf{v}$ is equal to \( \sfcol(T^{(k)}) \) by \cref{eq: prop of Ai}. 
Consequently, \( v \in \rmInc_k(\sfcol(T^{(k)})) \). 
Moreover, it is easy to see that \( \sfIES(u) \leq_{\rmset} \sfIES(v) \).  
Since \( u \in \rmInc_k(\sfcol(T)) \) was chosen arbitrarily, we conclude that  
\[
\sfmIES_k(\sfcol(T)) \leq_{\rmset} \sfmIES_k(\sfcol(T^{(k)})).
\]

Now we are ready to prove the assertion.
For a given positive integer $2 \leq k \leq l$,
every entry in the first column of $\hatPtab(\sfcol(T^{(k-1 )}))$ is also in the first column of  $\hatPtab(\sfcol(T^{(k )}))$.
Combining this with \cref{lem: first col determine},  
we have
\begin{align*}
\sfmIES_{k-1}(\sfcol(T^{(k-1)})) \subseteq \sfmIES_k(\sfcol(T^{(k)})).
\end{align*}
Finally, by \cref{eq: I_k on T and T[1:k]}, we conclude that 
\[
\sfmIES_{k-1}(\sfcol(T)) \subseteq \sfmIES_k(\sfcol(T)).
\]

(2) 
By (1), the set of entries in the first column of $\hatPtab(\sfcol(T))$ is given by $\{y_1, y_2, \ldots, y_l\}$, where \(y_k\) denotes a unique element in \(\sfmIES_k(\sfcol(T)) \setminus \sfmIES_{k-1}(\sfcol(T)\)).
For the assertion, it suffices to show that the number of cells in the row starting with $y_k$ is $\lambda_k$ for $1 \leq k \leq l$.
To do this, let $\hat{P}_{k}:= \hatPtab(\sfcol(T^{(k)}))$ for all $1 \leq k \leq l$.
The shape of $\hat{P}_1$ is $(\lambda_1)$, and thus, the number of cells starting with $y_1$ in $\hat{P}_1$ is equal to $\lambda_1$.
Let us fix $2 \leq k \leq l$.
By \cref{eq: I_k on T and T[1:k]}, the set of entries in the first column of $\hat{P}_k$ is $\{y_1, y_2, \ldots, y_k \}$ and $\hat{P}_{k-1}$ is $\{y_1, y_2, \ldots, y_{k-1} \}$.
One can easily see that for all $1 \leq i \leq k-1$, 
the number of cells in the row starting with $y_i$ in $\hat{P}_{k}$  is greater than or equal to that of cells in the row starting with $y_i$ in $\hat{P}_{k-1}$.
Combining this with 
$$
\lambda(\sh(\hat{P}_{k-1})) = (\lambda_1,  \ldots, \lambda_{k-1}) \text{ and }\lambda(\sh(\hat{P}_{k})) = (\lambda_1, \lambda_2, \ldots, \lambda_k),
$$ 
we deduce that 
\begin{itemize}
    \item for $1 \leq i < k$, the number of cells in the row starting with $y_i$ in $\hat{P}_k$ is  equal to that of cells in the row starting with $y_i$ in $\hat{P}_{k-1}$, and 
    \item the number of cells in the row starting with $y_i$ in $\hat{P}_k$ is $\lambda_k$.
\end{itemize}
Consequently, for $1 \leq k \leq l$, the number of cells in the row starting with \(y_k\) in \(\hat{P}_l = \hatPtab(\sfcol(T))\) is equal to that of cells in the row starting with \(y_k\) in \(\hat{P}_k\).
\end{proof}

\begin{example}
Let $\sigma = 52783146 \in \SG_8$.
Then,
\[
\Ptab(\sigma) = 
\begin{array}{c}\begin{ytableau}
    5\\
    2 & 7 & 8\\
    1 & 3 & 4 & 6
\end{ytableau}
\end{array}\,\,.
\]
This shows that $\sh (\Ptab(\sigma))=(4,3,1)$.
Furthermore, 
$\sfmIES_1(\sigma) = \{2 \}$, $\sfmIES_2(\sigma) = \{ 2, 5\}$, \text{and} $\sfmIES_3(\sigma) = \{ 1,2,5\}$.
Under the notation in \cref{thm: an analoge of Greene}(2), this computation shows that \( i_1 = 3 \), \( i_2 = 1 \), and \( i_3 = 2 \).
By \cref{thm: an analoge of Greene}(2), we deduce that 
\(\sh(\hatPtab(\sigma)) = (1,4,3)\). Indeed, 
\[
\hatPtab(\sigma) = 
\begin{array}{c}
\begin{ytableau}
5 & 7 & 8\\
2 & 3& 4& 6\\
1
\end{ytableau}
\end{array}.
\]
\end{example}

\section{Distinguished filtrations of  $\calV_\alpha$ and $X_\alpha$}
\label{Distinguished filtrations of V and X}

We begin by presenting the background of the problem under consideration.
Let \( M \) be a finitely generated \( H_n(0) \)-module and let
\[
\ch([M]) = \sum_{\alpha \models n} c_\alpha F_\alpha \quad (c_\alpha \in \mathbb{Z}_{\ge 0}).
\] 
Given a  composition series 
\[
0 =: M_0 \subsetneq M_1 \subsetneq M_2 \subsetneq \cdots \subsetneq M_l := M
\]
of \( M \),
the following hold:
\begin{itemize}
    \item For each \( i \), \( \ch([M_i/M_{i-1}]) = F_\alpha \) for some \( \alpha \models n \) with \( c_\alpha > 0 \).
    \item For each \( \alpha \models n \), the number of composition factors \( M_i/M_{i-1} \) (with \( 1 \le i \le l \)) satisfying \( \ch([M_i/M_{i-1}]) = F_\alpha \) is equal to \( c_\alpha \).
\end{itemize}
Now, suppose that $\ch([M])$  expands positively in $\mathcal{B}$, 
where \(\mathcal{B} = \{\mathcal{B}_\alpha \mid \alpha \in \calI\}\) is a linearly independent subset of \(\Qsym_n\) such that \(\mathcal{B}_\alpha\) is \(F\)-positive for all \(\alpha \in \calI\), with \(\calI\) being an index set. 
One may naturally ask whether a similar phenomenon occurs in this context. 
To address this question, Kim, Lee, and Oh introduced the notion of distinguished filtrations of an \( H_n(0) \)-module. 

\begin{definition}{\rm (\cite[Definition 6.5]{23KLO})}\label{def: dist filt}
Let $\mathcal{B} = \{\mathcal{B}_\alpha \mid \alpha \in \calI\}$ be a linearly independent subset of $\Qsym_n$ 
with the property that  $\mathcal{B}_\alpha$ is $F$-positive for all $\alpha \in \calI$,
where $\calI$ is an index set. 
Given an  $H_n(0)$-module $M$, 
a \emph{distinguished filtration of $M$ with respect to $\mathcal{B}$} is an $H_n(0)$-submodule series  of $M$ 
\[
0 =: M_0 \subsetneq M_1 \subsetneq M_2 \subsetneq \cdots \subsetneq M_l := M
\]
such that for all $1 \leq k \leq l$, $\ch([M_k / M_{k-1}]) = \mathcal{B}_\alpha$ for some $\alpha \in \calI$.
\end{definition}

As seen in \cite[Example 6.6]{23KLO}, 
a distinguished filtration of $M$ with respect to $\calB$ may not exist even if $\ch([M])$ expands positively in $\calB$.
This is because the category  $\Hnmod$ is  neither semisimple nor representation-finite when $n > 3$ (\cite{11DY, 02DHT}).  

Let us review the results presented in~\cite[Section 6.2]{23KLO}. 
The authors address the problem in the case where \( M \) is the weak Bruhat interval module \( B(I) \) associated with a dual plactic-closed left weak Bruhat interval \( I\) and \(\mathcal{B}\) is the Schur basis \(\{s_{\lambda} \mid \lambda \vdash n\}\). 
It is well known that $\ch([\sfB(I)])$ is a skew Schur function.
Under this assumption, it was proved in~\cite[Theorem 6.7]{23KLO} that \(\sfB(I)\) admits a distinguished filtration with respect to the Schur basis. 
In the proof, the combinatorics related to the Robinson-Schensted-Knuth algorithm plays a crucial role.

Now, consider the basis \(\mathcal{S} := \{\mathscr{S}_{\alpha} \mid \alpha \models n\}\) of the quasisymmetric Schur functions and the basis \(\hcalS := \{\hscrS_{\alpha} \mid \alpha \models n\}\) of the Young quasisymmetric Schur functions, rather than the Schur basis.
It is known that every Schur function \(s_{\lambda}\)  expands positively in both $\calS$ and $\hcalS$.
Consequently, one might anticipate that the modules examined in \cite{23KLO} possess a distinguished filtration with respect to these bases. 
However, as seen in \cref{no filtration example}, this expectation does not hold in general.

In this paper, the focus is on the cases where \( M = \mathcal{V}_\alpha \) or \( X_\alpha \) and $\mathcal{B}=\hcalS$ for the following reasons:
\begin{itemize}
    \item According to \cite[Theorem 1.1]{18AHM}, \(\ch([\mathcal{V}_\alpha])\) expands positively in  $\hcalS$, even though it is not a symmetric function.
    \item According to \cite[Theorem 4.10]{24MN},  \(\ch([X_\alpha])\)  expands positively in  $\hcalS$ when \(\alpha\) is a composition of $n$ obtained by shuffling a partition and $(1^k)$ for some $k \geq 0$.
    \item These cases can be handled in a combinatorial manner  by virtue of  \cref{alg: modified mason algotithm}.
    \item By \cite[Theorem 3.5]{22CKNO}, \(X_\alpha\) is a quotient module of \(\calV_\alpha\), revealing an important relationship between these two structures.
\end{itemize}

\subsection{Distinguished filtrations of  $\calV_\alpha$}
\label{Distinguished filtrations of V}
We introduce a new relation on \(\SG_n\) derived from \cref{alg: modified mason algotithm}.

\begin{definition}
Define a relation $\simeq_M$ on $\SG_n$ by 
\[
\sigma \simeq_M \rho \quad \text{if  $\sh(\hatPtab(\sigma)) = \sh(\hatPtab(\rho))$ and $\hatQtab(\sigma) = \hatQtab(\rho)$.}
\]
\end{definition}
It is straightforward to verify that \(\simeq_M\) is an equivalence relation on \(\SG_n\). 
Moreover, it follows from \cref{eq: due to Mason prop 1} and \cref{eq: due to Mason prop 2} that
\[
\sigma \simeq_M \rho \quad \text{if and only if} \quad \sh(\hatPtab(\sigma)) = \sh(\hatPtab(\rho)) \text{ and } \Qtab(\sigma) = \Qtab(\rho).
\]
Recall that  the dual Knuth equivalence relation $\dKeq$ on $\SG_n$ is defined by
\[
 \sigma \dKeq \rho \quad\text{ if $\sigma^{-1} \Keq \rho^{-1}$, equivalently, $\Qtab(\sigma) = \Qtab(\rho)$.}
\]
This indicates that \(\simeq_M\) is a refinement of the dual Knuth equivalence relation \(\dKeq\). To elaborate further, let \(\sigma_0 \in \SG_n\) and let \(\lambda\) denote the shape of \(\Ptab(\sigma_0)\). The dual Knuth equivalence class \(C\) containing \(\sigma_0\) can be expressed as the union of equivalence classes under \(\simeq_M\) as follows:
\[
C = \bigsqcup_{
\substack{
\beta \models n \\\lambda(\beta) = \lambda}}
\{\sigma \in \SG_n \mid \text{$\sh(\hatPtab(\sigma)) = \beta$ and $\hatQtab(\sigma) = \hatQtab(\sigma_0)$}\}.
\]
\cref{prop: left descent and Young descent} implies that if a subset \(S\) of \(\SG_n\) is closed under \(\simeq_M\), then the quasisymmetric function
\[
\sum_{\sigma \in S} F_{\comp(\Des_L(\sigma))}
\]
 expands positively in $\hcalS$. 
In particular, when $S$ is an equivalence class under $\simeq_M$, this function is equal to $\hscrS_\beta$, where $\beta$ is the shape of $\hatPtab(\sigma)$ for any $\sigma \in S$.

Let $\alpha \models n$.
Recall that \( \calV_\alpha \cong \sfB(\sfrow(\calT_\alpha), \sfrow(\calT'_\alpha)) \) (see \cref{interval module structures of V and X}). 
We will  show that \( [\sfrow(\calT_\alpha), \sfrow(\calT'_{\alpha})]_L \cdot w_0 \) is closed under \( \simeq_M \). 
To do so, we first review the results of Allen, Hallam, and Mason \cite{18AHM}. 
Therein, they proved that  the dual immaculate quasisymmetric function \( \SG_\alpha^* \)  expands positively in \( \hcalS \). 
To be precise, they first map a word \( w = w_1 w_2 \ldots w_n \) to a pair \( (\hatPtab(w), \hat{Q}(w)) \) of fillings of the same shape.
Here, \( \hat{Q}(w) \) is obtained by filling the new cell created during the insertion procedure \( w_k \to \hatPtab(w[1:k-1]) \) with \( k \), for each \( 1 \leq k \leq n \). 
They then showed that for \( \sigma \in \SG_n \), \( \hat{Q}(\sigma) \) is a {\it dual immaculate recording tableau} (DIRT) with a {\it row strip shape} \( \alpha^\rmr \) if and only if \( \sigma \in [\sfrow(\calT_\alpha), \sfrow(\calT'_\alpha)]_L \cdot w_0 \) 
(for undefined terms such as dual immaculate recording tableaux and row strip shapes, see \cite[Definitions 3.9 and 3.4]{18AHM}).
Using this property, they established the bijection
\begin{align}\label{eq: 2018 AHM map}
\begin{split}
\underline{\rm AHM}: 
\SIT(\alpha) &\ra \bigsqcup_{\beta \models n} \SYCT(\beta) \times {\rm DIRT}(\beta, \alpha^\rmr)\\
\calT &\mapsto  (\hatPtab(\sfrow(\calT)w_0), \hat{Q}(\sfrow(\calT)w_0)).
\end{split}
\end{align}
Second, we observe that for $\sigma, \rho \in \SG_n$, 
\begin{align}\label{eq: relationship between Q' and hatQ}
    \text{if $\hat{Q}(\sigma) = \hat{Q}(\rho)$, then
$\hatQtab(\sigma) = \hatQtab(\rho)$.
}
\end{align}
Indeed, this can be derived by combining the following facts for a word \(w = w_1 w_2 \ldots w_n\):
\begin{itemize}
    \item \(\hat{Q}(w)\) 
records  the row and column indices  of the new cell created during the insertion procedure \(w_k \ra \hatPtab(w[1:k-1])\), for all \(1 \leq k \leq n\),
    \item \(\hatQtab(w)\) records only the column index of the new cell created during the insertion procedure \(w_k \ra \hatPtab(w[1:k-1])\), for all \(1 \leq k \leq n\).
\end{itemize}

The following lemma is derived by combining \cref{eq: 2018 AHM map} and \cref{eq: relationship between Q' and hatQ} with the $\C$-linear isomorphism in \cref{interval module structures of V and X}(1).
\begin{lemma}\label{prop: SIT and SET is closed under simeqM}
For a composition $\alpha$ of $n$,
$[\sfrow(\calT_\alpha), \sfrow(\calT'_{\alpha})]_L \cdot w_0$ is closed under $\simeq_M$.
\end{lemma}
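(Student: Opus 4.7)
The plan is to combine the three ingredients indicated in the text just above the statement: the Allen--Hallam--Mason bijection~\eqref{eq: 2018 AHM map}, the implication~\eqref{eq: relationship between Q' and hatQ}, and the isomorphism of \cref{interval module structures of V and X}(1).

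Let $\sigma \in [\sfrow(\calT_\alpha), \sfrow(\calT'_\alpha)]_L \cdot w_0$ and suppose $\rho \simeq_M \sigma$, so that $\hatQtab(\rho) = \hatQtab(\sigma)$ and $\sh(\hatPtab(\rho)) = \sh(\hatPtab(\sigma)) =: \beta$. The goal is to show $\rho \in [\sfrow(\calT_\alpha), \sfrow(\calT'_\alpha)]_L \cdot w_0$. First, \cref{interval module structures of V and X}(1) realizes the interval as the image of the map $\calT \mapsto \sfrow(\calT)\, w_0$ on $\SIT(\alpha)$. Composing with $\underline{\rm AHM}$ from~\eqref{eq: 2018 AHM map} yields a bijection
\[
[\sfrow(\calT_\alpha), \sfrow(\calT'_\alpha)]_L \cdot w_0 \ \longrightarrow\ \bigsqcup_{\beta \models n} \SYCT(\beta) \times \mathrm{DIRT}(\beta, \alpha^\rmr),\quad \gamma \mapsto (\hatPtab(\gamma),\, \hat{Q}(\gamma)).
\]
Consequently, membership in the interval is captured by the clean criterion that $\hat{Q}(\gamma)$ be a DIRT of row strip shape $\alpha^\rmr$ on the shape $\sh(\hatPtab(\gamma))$.

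Applied to $\sigma$, this yields $\hat{Q}(\sigma) \in \mathrm{DIRT}(\beta, \alpha^\rmr)$. The plan is then to establish the key equality $\hat{Q}(\sigma) = \hat{Q}(\rho)$; once obtained, the same criterion, now read in the other direction, places $\rho$ in the interval. To prove the equality, I would argue that the pair of data $(\hatQtab(w),\, \sh(\hatPtab(w)))$ already determines $\hat{Q}(w)$. The starting point is that both $\hat{Q}(w)$ and $\hatQtab(w)$ encode the same column sequence $(c_1,\dots,c_n)$, namely the sequence of columns in which the new cells are created in the successive insertions $w_k \ra \hatPtab(w[1{:}k{-}1])$; indeed, the implication~\eqref{eq: relationship between Q' and hatQ} essentially says that this column data is the only information in $\hat{Q}$ that $\hatQtab$ remembers. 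With the columns fixed and the final shape $\beta$ of $\hat{Q}$ prescribed, a step-by-step analysis of \cref{alg: modified mason algotithm} shows that the row assigned to the new cell at each step is forced by the requirement that the intermediate shapes ascend consistently to $\beta$.

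The hard part is this last claim, which is the technical heart of the proof. Although~\eqref{eq: relationship between Q' and hatQ} gives only the implication $\hat{Q}(\sigma) = \hat{Q}(\rho) \Rightarrow \hatQtab(\sigma) = \hatQtab(\rho)$, the additional data of $\sh(\hatPtab)$ removes the residual ambiguity, because the shifting of pre-existing rows when a new row is introduced below them is governed entirely by the composition shape of the current tableau together with the column being filled. Carefully tracking how the labels in $\hat{Q}$ follow these shifts through the algorithm is the step I would expect to consume the most bookkeeping, but it yields $\hat{Q}(\sigma) = \hat{Q}(\rho)$ and therefore the lemma.
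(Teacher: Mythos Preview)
Your central technical claim—that the pair $(\hatQtab(w),\sh(\hatPtab(w)))$ determines $\hat{Q}(w)$—is false, and the paper itself supplies a counterexample in Remark~(2) immediately after the lemma. There $\sigma=621543$ and $\rho=531426$ satisfy $\hatQtab(\sigma)=\hatQtab(\rho)$ and $\sh(\hat Q(\sigma))=\sh(\hat Q(\rho))=(2,2,1,1)$, hence $\sh(\hatPtab(\sigma))=\sh(\hatPtab(\rho))$ and so $\sigma\simeq_M\rho$; yet $\hat Q(\sigma)\neq\hat Q(\rho)$. So the step you call the ``technical heart'' cannot be carried out, and the proposed proof does not close.

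The argument the paper has in mind is a counting argument rather than a determination argument. Fix $\sigma$ in the interval and set $\beta:=\sh(\hatPtab(\sigma))$ and $Q:=\hat Q(\sigma)\in\mathrm{DIRT}(\beta,\alpha^\rmr)$. Since $\underline{\mathrm{AHM}}$ in \eqref{eq: 2018 AHM map} is a bijection, the fiber $\{\rho\text{ in the interval}:\hat Q(\rho)=Q\}$ has exactly $|\SYCT(\beta)|$ elements. By \eqref{eq: relationship between Q' and hatQ} every such $\rho$ has $\hatQtab(\rho)=\hatQtab(\sigma)$, and since $\sh(\hat Q(\rho))=\beta$ it also has $\sh(\hatPtab(\rho))=\beta$; thus the whole fiber sits inside the $\simeq_M$-class of $\sigma$. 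But that class also has exactly $|\SYCT(\beta)|$ elements, by the bijection \eqref{eq: pseudo RSK corr}. Hence the fiber \emph{equals} the $\simeq_M$-class, which is therefore contained in the interval. This uses only the forward implication \eqref{eq: relationship between Q' and hatQ}, never its (false) converse.
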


\begin{remark} 
(1) \cref{prop: SIT and SET is closed under simeqM} does not imply that \( [\sfrow(\calT_\alpha), \sfrow(\calT'_{\alpha})]_L \) is closed under \( \simeq_M \). More generally, even if a subset \( S \subseteq \SG_n \) is closed under \( \simeq_M \), it does not necessarily follow that the sets \( S \cdot w_0 \), \( w_0 \cdot S \), and \( w_0 \cdot S \cdot w_0 \) are also closed under \( \simeq_M \). 
For instance, consider \( S = \{3124, 4123\} \subseteq \SG_4 \); in this case, none of $S \cdot w_0$, $w_0 \cdot S$, and $w_0 \cdot S \cdot w_0$ is closed under $\simeq_M$. 
On the other hand, if \( S \) is closed under the dual Knuth equivalence relation \( \dKeq \), then the sets \( S \cdot w_0 \), \( w_0 \cdot S \), and \( w_0 \cdot S \cdot w_0 \) remain closed under \( \dKeq \). 
This property is an intriguing distinction between \( \simeq_M \) and \( \dKeq \).

(2) For $\sigma, \rho \in \SG_n$, the equality \(\hatQtab(\sigma) = \hatQtab(\rho)\) does not necessarily imply that \(\hat{Q}(\sigma) = \hat{Q}(\rho)\).
For instance, let $\sigma = 621543$ and $\rho = 531426$.
Then
\[
\hatQtab(\sigma) = \hatQtab(\rho)=
\begin{array}{c}
\begin{ytableau}
    6 \\
    3 & 4\\
    2 & 5\\
    1 
\end{ytableau}
\end{array}, 
\text{
but } \,\,
\hat{Q}(\sigma) =
\begin{array}{c}
\begin{ytableau}
    3\\
    6\\
    2 & 4\\
    1 & 5
\end{ytableau}
\end{array} 
\ne 
\hat{Q}(\rho) =
\begin{array}{c}
\begin{ytableau}
    6\\
    3\\
    2 & 4\\
    1 & 5
\end{ytableau}
\end{array}.
\]
\end{remark}

For any \( \calT \in \SIT(\alpha) \), the set of entries in the first column of \( \hatPtab(\sfrow(\calT)w_0) \) is equal to the set of entries in the first column of \( \calT \). 
Combined with \cref{thm: an analoge of Greene}, this observation directly leads to the following lemma.

\begin{lemma}\label{lem: first col of SIT}
   Let $\alpha$ be a composition of $n$ and $\calT \in \SIT(\alpha)$.
   \begin{enumerate}[label = {\rm (\arabic*)}]
       \item The length of the shape of  $\hatPtab(\sfrow(\calT) w_0)$  is equal to $\ell(\alpha)$.
       \item  For \( 1 \leq k \leq \ell(\alpha) \),  
$$\sfmIES_{k}(\sfrow(\calT)w_0) \subseteq \{\calT(i, 1) \mid 1 \leq i \leq \ell(\alpha) \}.$$  
In particular, the equality holds when \( k = \ell(\alpha) \). 
\end{enumerate}
\end{lemma}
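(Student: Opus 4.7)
The plan is to derive both parts of the lemma from the observation---explicitly noted in the paragraph immediately preceding the statement---that the set of entries in the first column of $\hatPtab(\sfrow(\calT)w_0)$ coincides with $\{\calT(i,1) \mid 1 \le i \le \ell(\alpha)\}$, combined with the Greene-type result of \cref{thm: an analoge of Greene} and the auxiliary \cref{lem: first col determine}. Both parts then follow almost mechanically once this observation is in hand.

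For part (1), I would note that in any Young composition tableau the number of rows equals the cardinality of the first column. The observation therefore forces $\sh(\hatPtab(\sfrow(\calT)w_0))$ to have exactly $\ell(\alpha)$ parts, giving the claim. For part (2), set $\sigma := \sfrow(\calT)w_0$ and let $l$ denote the length of $\sh(\hatPtab(\sigma))$; part (1) yields $l = \ell(\alpha)$. Then \cref{lem: first col determine} identifies $\sfmIES_l(\sigma)$ with the set of first-column entries of $\hatPtab(\sigma)$, which by the observation is precisely $\{\calT(i,1) \mid 1 \le i \le \ell(\alpha)\}$; this handles the equality case $k = \ell(\alpha)$. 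For $1 \le k < \ell(\alpha)$, I would invoke the strict chain
\[
\sfmIES_0(\sigma) \subsetneq \sfmIES_1(\sigma) \subsetneq \cdots \subsetneq \sfmIES_l(\sigma)
\]
supplied by \cref{thm: an analoge of Greene}(1), which forces $\sfmIES_k(\sigma) \subseteq \sfmIES_{\ell(\alpha)}(\sigma) = \{\calT(i,1) \mid 1 \le i \le \ell(\alpha)\}$ as required.

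The only substantive point is the cited observation itself, and this is the main obstacle. My plan to verify it directly would be to trace \cref{alg: modified mason algotithm} on $\sfrow(\calT)w_0$---a word that reads the rows of $\calT$ from top to bottom and, within each row, from left to right---and to prove by induction on $j$ that after the entries of the top $j$ rows of $\calT$ have been inserted, the first column of the running Young composition tableau is exactly $\{\calT(\ell(\alpha)-j+1,1), \ldots, \calT(\ell(\alpha),1)\}$. The induction step requires understanding how $\calT(\ell(\alpha)-j,1)$ is handled by \cref{alg: insertion algorithm}: using the SIT conditions (strict increase up the first column, weak increase along rows), one argues that no existing first-column entry can be bumped, so the new entry either terminates by creating a fresh row inserted to preserve the first-column strict-increase, or slots into a new cell without disturbing any first-column entry. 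Alternatively, one can bypass this case analysis by invoking the row-strip shape condition built into the AHM bijection of \cref{eq: 2018 AHM map}.
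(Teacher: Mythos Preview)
Your proposal is correct and matches the paper's own approach essentially exactly. The paper's entire ``proof'' is the sentence preceding the lemma: the observation about first columns, combined with \cref{thm: an analoge of Greene}, is asserted to yield the result directly, and you have spelled out precisely how---via \cref{lem: first col determine} for the equality case and the chain of inclusions in \cref{thm: an analoge of Greene}(1) for the containment. Your identification of the first-column observation as the only nontrivial ingredient is accurate; the paper states it without proof, and your proposed inductive verification (or the appeal to the AHM machinery) is a reasonable way to fill that gap.
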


Let \(\leq_{\rm lex}\) denote the lexicographic order on the set of all compositions of \(n\).
The following lemma is essential in constructing our distinguished filtrations. 

\begin{lemma}\label{lem: shape and SIT word}
Let \(\alpha\) be a composition of \(n\) and let \(T, S \in \SIT(\alpha)\). If \(\sfrow(T) \dKeq \sfrow(S)\) and \(\sfrow(T) \preceq_L \sfrow(S)\), then 
\[
\sh(\hatPtab(\sfrow(T) w_0)) \geq_{\rm lex} \sh(\hatPtab(\sfrow(S) w_0)).
\]
\end{lemma}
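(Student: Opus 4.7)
Set $\sigma^T := \sfrow(T)w_0$ and $\sigma^S := \sfrow(S)w_0$. Since right multiplication by $w_0$ reverses $\preceq_L$ and commutes with dual Knuth equivalence (via Sch\"utzenberger's involution on the recording tableau), the hypotheses translate into $\sigma^T \succeq_L \sigma^S$ and $\sigma^T \overset{K^*}{\cong} \sigma^S$; in particular $\sh(\Ptab(\sigma^T)) = \sh(\Ptab(\sigma^S)) =: \lambda$, and \cref{lem: first col of SIT}(1) together with \cref{thm: an analoge of Greene}(2) yield $\ell(\lambda) = \ell(\alpha)$. Writing $l := \ell(\lambda)$, $\sfmIES_l(\sigma) = \{x_1 < \cdots < x_l\}$, and letting $i_k(\sigma)$ denote the unique index with $x_k \in \sfmIES_{i_k(\sigma)}(\sigma) \setminus \sfmIES_{i_k(\sigma)-1}(\sigma)$, \cref{thm: an analoge of Greene}(2) rewrites $\sh(\hatPtab(\sigma)) = (\lambda_{i_1(\sigma)}, \ldots, \lambda_{i_l(\sigma)})$. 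The desired lex inequality thus becomes a statement about the ``entry-time'' permutations attached to $\sigma^T$ and $\sigma^S$.

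Because $\sfrow(T)$ and $\sfrow(S)$ both lie in the weak Bruhat interval $[\sfrow(\calT_\alpha),\sfrow(\calT'_\alpha)]_L$, which by \cref{interval module structures of V and X}(1) coincides with $\{\sfrow(T') \mid T' \in \SIT(\alpha)\}$, every element of a saturated $\preceq_L$-chain from $\sfrow(T)$ up to $\sfrow(S)$ is again the row word of some SIT of shape $\alpha$. Combining this with the classical fact that each dual Knuth equivalence class of $\SG_n$ is a closed interval under $\preceq_L$ (so that its intersection with the above interval is again a weak Bruhat interval), one may choose such a saturated chain lying entirely within the dual Knuth class of $\sfrow(T)$; every cover in the chain is then an elementary dual Knuth move $\tau \prec_L s_j \tau$. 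By induction on the length of this chain, the lemma reduces to the single-step claim: if $T, T' \in \SIT(\alpha)$ and $\sfrow(T') = s_j \sfrow(T)$ is an elementary dual Knuth cover of $\sfrow(T)$, then $\sh(\hatPtab(\sigma^T)) \geq_{\rm lex} \sh(\hatPtab(\sigma^{T'}))$.

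The main obstacle is this one-step comparison. Through $w_0$ the cover becomes $\sigma^{T'} = s_j \sigma^T$ with $\sigma^T \succ_L \sigma^{T'}$, and the dual-Knuth pattern forces one of $j-1, j+2$ to sit strictly between $j$ and $j+1$ in the one-line notation of $\sigma^T$. I would use \cref{thm: Greene k-dec} to describe each $\sfmIES_k(\sigma^T)$ in terms of $k$-tuples of longest disjoint increasing subsequences of $\sigma^T$, and then trace how these tuples are perturbed by the value swap $j \leftrightarrow j+1$. The plan is to split into cases based on whether $j$ or $j+1$ appears as an initial entry of a maximizing tuple and on their row placements inside $T$ and $T'$ (using \cref{lem: first col of SIT}(2) to locate first-column entries), and to show that the swap either (i) preserves every $\sfmIES_k$, giving equal shapes, or (ii) alters exactly one $\sfmIES_k$ by replacing a first-column entry $x_k$ with a strictly larger element, which by \cref{thm: an analoge of Greene}(2) pushes $x_k$ to enter the filtration at a later step and hence produces a strictly lex-smaller shape. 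Composing these single-step inequalities along the chain then yields the lemma; the combinatorial case analysis for (ii) is the technical heart of the argument.
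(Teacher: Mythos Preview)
Your reduction to a single weak-order cover is sound, and the intermediate permutations do stay in the dual Knuth class (this is exactly what \eqref{eq: left bruhat order and shape} gives when the two endpoints share the same $Q$-tableau). The paper makes the same reduction, only more tersely.

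The genuine gap is your one-step plan. Your dichotomy (i)/(ii) is not justified and is almost certainly false as stated: the swap $j\leftrightarrow j+1$ can perturb several of the sets $\sfmIES_k$ simultaneously, not ``exactly one'', and in the only case where the first column actually changes (namely $j+1$ in the first column of $T$, $j$ not) the replacement goes to a \emph{smaller} entry, not a larger one, so the heuristic ``larger initial entry $\Rightarrow$ later entry time $\Rightarrow$ lex-smaller shape'' points the wrong way. Tracking how individual entry-times $i_k(\sigma)$ move under the swap is delicate, because a change in $\sfmIES_l$ reshuffles the labeling $x_1<\cdots<x_l$ itself.

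The paper sidesteps this entirely. For the cover $\sfrow(S)=s_i\,\sfrow(T)$ it first shows the uniform inequality $s_i\cdot\sfmIES_k(\sfrow(T)w_0)\le_{\rmset}\sfmIES_k(\sfrow(S)w_0)$ for \emph{every} $k$ (using only that $i+1$ lies left of $i$ in $\sfrow(T)w_0$, so that $s_i\cdot u\in\rmInc_k$ whenever $u\in\rmInc_k$). Then it invokes \cref{lem: first col of SIT}(2) to rewrite both sides as sets of \emph{row indices} $I_k,J_k\subseteq[\ell(\alpha)]$, obtaining $I_k\le_{\rmset}J_k$; this is the key move your outline is missing, since row indices are stable under the value swap $s_i$. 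Finally, writing $\alpha'=\sh(\hatPtab(\sfrow(T)w_0))$ and $\beta=\sh(\hatPtab(\sfrow(S)w_0))$ and assuming $\alpha'<_{\rm lex}\beta$ with first disagreement at position $t_0$, \cref{thm: an analoge of Greene} gives $I_{k_0}=\{r:\alpha'_r\ge\lambda_{k_0}\}$ and $J_{k_0}=\{r:\beta_r\ge\lambda_{k_0}\}$ for the appropriate $k_0$; since these two sets agree on $[1,t_0-1]$ but $t_0\in J_{k_0}\setminus I_{k_0}$, one gets $I_{k_0}>_{\rmset}J_{k_0}$, a contradiction. This argument never needs to know \emph{which} $\sfmIES_k$ changed or by how much.
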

\begin{proof}
We may assume that \(\sfrow(S) = s_i \sfrow(T)\) for some \(1 \leq i \leq n-1\). 
Then \(i\) is strictly below \(i+1\) in \(T\) and 
one of the following conditions holds:
\begin{itemize}
    \item Neither $i$ nor $i+1$ are not in the first column of $T$, or
    \item $i+1$ is in the first column of $T$, while $i$ is not.
\end{itemize} 
For simplicity, let \(\lambda := \sh(\Ptab(\sfrow(T)w_0))\) and \(l := \ell(\lambda)\).
Let \( 1 \leq k \leq l \). Note that \( i+1 \) is to the left of \( i \) in \(\sfrow(T) w_0\) when written in one-line notation.
From this, it follows that  
 $s_i \cdot u \in \rmInc_k(\sfrow(S)w_0)$ for any $u \in \rmInc_k(\sfrow(T)w_0)$.
Here, $s_i \cdot u$ denotes the $k$-tuple of sequences obtained from $u$ by swapping $i$ and $i+1$. 
Therefore, 
\begin{align}\label{eq: index inequality comp}
    s_i \cdot \sfmIES_k(\sfrow(T)w_0) \leq_{\rmset} \sfmIES_k(\sfrow(S)w_0).
\end{align}
By \cref{lem: first col of SIT}, there exist subsets $I_k, J_k \subseteq [l]$ with $|I_k| = |J_k| = k$ such that  
\begin{align*}
\sfmIES_k(\sfrow(T)w_0) = \{T(r, 1) \mid r \in I_k \} \quad \text{and} \quad \sfmIES_k(\sfrow(S)w_0) = \{S(r, 1) \mid r \in J_k \}.
\end{align*}
Combining these equalities with \cref{eq: index inequality comp} yields that
\begin{align}\label{eq: row index compare}
I_k \leq_{\rmset} J_k.
\end{align}

Let \(\alpha := \sh(\hatPtab(\sfrow(T)w_0))\) and \(\beta := \sh(\hatPtab(\sfrow(S)w_0))\). 
We claim that \(\alpha \geq_{\rm lex} \beta\). 
Suppose to the contrary that \(\alpha <_{\rm lex} \beta\). 
Then, there exists a unique index \(1 \leq t_0 \leq l\) such that \(\alpha_t = \beta_t\) for all \(1 \leq t < t_0\) and \(\alpha_{t_0} < \beta_{t_0}\). 
Thus, we can select an index \(1 \leq k_0 < l\) such that \(\lambda_{k_0} = \beta_{t_0}\) and \(\lambda_{k_0} > \lambda_{k_0 + 1}\).
Since $\lambda(\alpha) = \lambda(\beta) =\lambda$, by \cref{thm: an analoge of Greene},
we have \(I_{k_0} = \{1 \leq r \leq l \mid \alpha_r \geq \lambda_{k_0}\}\) and \(J_{k_0} = \{1 \leq r \leq l \mid \beta_r \geq \lambda_{k_0}\}\). 
Consequently,
\[
I_{k_0} = A \sqcup B \quad \text{and} \quad J_{k_0} = A' \sqcup \{t_0\} \sqcup B',
\]
for some \(A, A' \subseteq [1, t_0-1]\) and \(B, B' \subseteq [t_0 + 1, l]\).
Note that 
$A = A'$ since $\alpha_t = \beta_t$ for all $1 \leq t < t_0$. 
Furthermore, since $|I_{k_0}| = |J_{k_0}|$, it follows that $I_{k_0} >_{\rmset} J_{k_0}$. 
This contradicts the inequality in \cref{eq: row index compare}.
Therefore, our assertion follows.
\end{proof}

Recall that in the proof of \cite[Theorem 6.7]{23KLO}, the following order relation plays a key role: for \(\sigma, \rho \in \SG_n\) with \(\sigma \preceq_L \rho\), it holds that
\begin{align}\label{eq: left bruhat order and shape}
\Qtab(\sigma) = \Qtab(\rho) \quad \text{or} \quad \sh(\Qtab(\sigma)) \triangleright \sh(\Qtab(\rho)),
\end{align}
where \(\triangleright\) denotes the dominance order on the set of partitions of \(n\). This result can be derived by applying Ta\c{s}kin's result \cite[Proposition 3.2.5]{06Taskin} to the weak order\footnote{This order was originally defined in \cite[2.5.1]{04Mel}, where it is called the \emph{induced Duflo order}.} on \(\SYT_n\) given in \cite[Definition 3.1.3]{06Taskin}.
This relation also plays a crucial role in the proof of the subsequent theorem.

\begin{theorem}\label{thm: dist filt for Va}
For a composition $\alpha$ of $n$, $\calV_\alpha$ has a distinguished filtration with respect to $\{\hscrS_\beta \mid \beta \models n \}$.
\end{theorem}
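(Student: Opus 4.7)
The strategy is to transport $\calV_\alpha$ via \cref{interval module structures of V and X} to the weak Bruhat interval module $\sfB(I)$ with $I := [\sfrow(\calT_\alpha), \sfrow(\calT'_\alpha)]_L$, and to build a filtration indexed by the $\simeq_M$-classes of $I \cdot w_0$. By \cref{prop: SIT and SET is closed under simeqM}, $I \cdot w_0$ decomposes as a disjoint union of $\simeq_M$-classes $E_1, \ldots, E_m$. For each class $E$, the composition $\beta(E) := \sh(\hatPtab(\sigma))$ and the partition $\lambda(E) := \lambda(\beta(E)) = \sh(\Ptab(\sigma))$ are constant on $E$, and the Mason bijection of \cref{subsec: an analog of RSK} identifies $E$ with $\SYCT(\beta(E))$ via $\sigma \mapsto \hatPtab(\sigma)$. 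Combined with \cref{prop: left descent and Young descent} and the $F$-expansion of $\hscrS_{\beta(E)}$, this yields
\[
\sum_{\sigma \in E} F_{\comp(\Des_L(\sigma))} \;=\; \hscrS_{\beta(E)}.
\]
Since $\comp(\Des_L(\gamma))^{\rmc} = \comp(\Des_L(\gamma w_0))$ for every $\gamma \in \SG_n$, summing over all classes simultaneously delivers the $\hscrS$-positive expansion of $\ch([\calV_\alpha])$ and exhibits the desired composition factors.

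To upgrade this decomposition into a filtration, I would linearly order the classes as follows. Fix a total order $\leq_{\rm part}$ on partitions of $n$ that refines the \emph{reverse} of the dominance order $\trianglelefteq$ (for example reverse lex), and declare $E < E'$ when either $\lambda(E) <_{\rm part} \lambda(E')$, or $\lambda(E) = \lambda(E')$ and $\beta(E) <_{\rm lex} \beta(E')$, breaking any remaining ties arbitrarily. Labeling $E_1 < E_2 < \cdots < E_m$, set
\[
M_k \;:=\; \Span_{\C}\{\gamma \in I : \gamma w_0 \in E_1 \cup \cdots \cup E_k\}.
\]
Then $M_k/M_{k-1}$ is spanned by the preimage of $E_k$ in $I$ and has character $\hscrS_{\beta(E_k)}$ by the identity above; it only remains to verify that each $M_k$ is an $H_n(0)$-submodule of $\sfB(I)$.

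The submodule check reduces to the nontrivial case of the weak Bruhat action, namely $\gamma \in I$ with $i \notin \Des_L(\gamma)$ and $s_i\gamma \in I$, so $\pi_i \cdot \gamma = s_i\gamma$ with $\gamma \prec_L s_i\gamma$. Writing $\sigma := \gamma w_0$ and $\sigma' := s_i\sigma = (s_i\gamma)w_0$, it suffices to show that the class of $\sigma'$ is $\leq$ the class of $\sigma$ in our total order. If $\gamma \dKeq s_i\gamma$, then \cref{lem: shape and SIT word} applied to the standard immaculate tableaux with row words $\gamma$ and $s_i\gamma$ gives $\sh(\hatPtab(\sigma)) \geq_{\rm lex} \sh(\hatPtab(\sigma'))$; since also $\lambda(E_{\sigma'}) = \lambda(E_{\sigma})$, the class of $\sigma'$ is weakly below that of $\sigma$. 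Otherwise $\Qtab(\gamma) \neq \Qtab(s_i\gamma)$, and the Melnikov--Ta\c{s}kin inequality \eqref{eq: left bruhat order and shape} applied to $\gamma \prec_L s_i\gamma$ forces $\sh(\Qtab(\gamma)) \triangleright \sh(\Qtab(s_i\gamma))$ strictly; combined with the identities $\sh(\Ptab(\delta w_0)) = \sh(\Ptab(\delta))^t$ and $\sh(\Ptab) = \sh(\Qtab)$, this translates into $\lambda(E_{\sigma'}) \triangleright \lambda(E_\sigma)$ strictly, hence $\lambda(E_{\sigma'}) <_{\rm part} \lambda(E_\sigma)$ and the class of $\sigma'$ lies strictly before that of $\sigma$. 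In both cases $s_i\gamma \in M_k$, so each $M_k$ is an $H_n(0)$-submodule, giving the distinguished filtration.

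The principal obstacle is the delicate subcase where $\gamma \dKeq s_i\gamma$ but $\sh(\hatPtab(\sigma)) = \sh(\hatPtab(\sigma'))$: \cref{lem: shape and SIT word} then yields only equality of shapes, and if $\hatQtab(\sigma) \neq \hatQtab(\sigma')$ one obtains two distinct $\simeq_M$-classes sharing the same $(\lambda, \beta)$, so the ``arbitrary'' tie-breaker above must in fact be coherent with \emph{every} such $s_i$-adjacency. I would resolve this either by a direct combinatorial analysis of \cref{alg: modified mason algotithm} under the transposition of adjacent values---showing that for $s_i$-related elements of $I \cdot w_0$, equality of $\sh(\hatPtab)$ already forces equality of $\hatQtab$ and hence $\sigma \simeq_M \sigma'$ through the AHM bijection \eqref{eq: 2018 AHM map}---or by producing a canonical linear order on DIRT recording tableaux of fixed composition shape that interacts compatibly with the left weak Bruhat order.
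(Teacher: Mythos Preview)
Your approach is essentially the paper's own, and the ``principal obstacle'' you isolate in the last paragraph is illusory. In your Case A you assume $\gamma \dKeq s_i\gamma$, i.e., $\Qtab(\gamma) = \Qtab(s_i\gamma)$. The standard RSK--reversal identity $\Qtab(\delta w_0) = \bigl(\mathrm{evac}\,\Qtab(\delta)\bigr)^{t}$ shows that $\delta \mapsto \Qtab(\delta w_0)$ factors bijectively through $\Qtab(\delta)$; hence $\Qtab(\sigma) = \Qtab(\sigma')$ and therefore $\hatQtab(\sigma) = \hatQtab(\sigma')$ by \eqref{eq: due to Mason prop 1} and \eqref{eq: due to Mason prop 2}. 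Thus in Case A the two classes already share the same $\hatQtab$: if additionally $\sh(\hatPtab(\sigma)) = \sh(\hatPtab(\sigma'))$, then $\sigma \simeq_M \sigma'$ and $E_\sigma = E_{\sigma'}$. Consequently no $s_i$-adjacency ever links two distinct classes with the same $(\lambda,\beta)$, and the arbitrary tie-breaker is never tested; your filtration is genuinely a submodule chain.

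The paper's proof is organized slightly differently---it orders primarily by $\hatQtab(\sigma w_0)$ (under a total order refining reverse dominance on the underlying partition) and secondarily by $\sh(\hatPtab(\sigma w_0))$ in lex---but the case analysis is identical once one notes (as you implicitly do, via \eqref{eq: left bruhat order and shape} and transposition) that the dichotomy ``same partition shape vs.\ strictly dominance-comparable'' for $\gamma \prec_L s_i\gamma$ is exactly the dichotomy ``$\Qtab(\gamma) = \Qtab(s_i\gamma)$ vs.\ not''. Both orderings work for the same reason: Case B strictly drops the partition, and Case A fixes $\hatQtab$ and weakly drops $\beta$ in lex.
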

\begin{proof}
By \cref{interval module structures of V and X}(1), the assertion is equivalent to stating that \( \sfB(\sfrow(\calT_\alpha), \sfrow(\calT'_\alpha)) \) has a distinguished filtration with respect to \( \{\hscrS_\beta \mid \beta \models n \} \).
For simplicity, set $I := [\sfrow(\calT_\alpha), \sfrow(\calT'_\alpha)]_L$.
Let $\calL := \{\hatQtab(\sigma w_0) \mid \sigma \in I\}$ and choose an arbitrary total order  $<_{\calL}$  on $\calL$ such that 
\[
T <_{\calL} S \quad \text{if $\lambda(\sh(T)) \triangleright \lambda(\sh(S))$}.
\]
For $Q \in \calL$, set
\[
A_Q := \{\sh(\hatPtab(\sigma w_0)) \mid  \text{$\sigma \in I$ and $\hatQtab(\sigma w_0) = Q$}\}.
\]
Let 
\[
R:= \bigsqcup_{Q \in \calL} \{Q\} \times  A_Q,
\]
and define a total order $\ll$ on $R$ by 
\[
(T, \beta) \ll (S, \gamma) \quad \text{if either $T <_{\calL} S$ or ($T = S$ and $\beta <_{\rm lex} \gamma$).}
\]
Enumerate the elements in \( R \) in increasing order with respect to the total order \( \ll \) so that
\begin{equation}\label{eq: def of R two parameters}
R = \{(Q_1, \gamma_1) \ll (Q_2, \gamma_2) \ll \cdots \ll (Q_l, \gamma_l)\}.
\end{equation}
Now, for each \(1 \leq k \leq l\), define
\begin{equation}\label{eq: def of set B(k)}
B_k := \{\sigma \in I \mid \hatQtab(\sigma w_0) = Q_i \;\text{and}\; \sh(\hatPtab(\sigma w_0)) = \gamma_i \;\text{for some} \; 1 \leq i \leq k\},
\end{equation}
with \(B_0 = \emptyset\). In particular, $B_l = I$.
We prove the assertion in two steps:

{\it Step 1}.  
For each \(1 \leq k \leq l\), we claim that \(M_k := \mathbb{C} B_k\) is an \(H_n(0)\)-submodule of \(\sfB(I)\). 
To  prove  this, it suffices to verify that  
\begin{align*}
\pi_i \cdot \sigma \subseteq B_k \cup \{0\} \quad \text{for all $\sigma \in B_k$ and $1 \leq i \leq n-1$.}
\end{align*}
Let $\sigma \in B_k$ and $1 \leq i \leq n-1$.
If $\pi_i \cdot \sigma$ is $\sigma$ or $0$, we are done.
Suppose that $\pi_i \cdot \sigma = s_i \sigma$, that is, $s_i \sigma \in I$ and $\ell(s_i\sigma) = \ell(\sigma) + 1$.
Let \( s \) (respectively, \( t \)) be the smallest positive integer $1 \leq k \leq l$ such that \( \sigma w_0 \in B_k \) (respectively, \( s_i \sigma w_0 \in B_k \)).
To prove the claim, it suffices to show that $s \geq t$.
\begin{itemize}
\item 
Suppose that the rearrangements of \(\gamma_s\) and \(\gamma_t\) are the same. This assumption is equivalent to saying that the shapes of \(\Ptab(\sigma w_0)\) and \(\Ptab(s_i \sigma w_0)\) are the same.
Note $\sigma w_0 \succeq_L s_i \sigma w_0$. 
Therefore, by \cref{eq: left bruhat order and shape}, we deduce that  $\Qtab(\sigma w_0) = \Qtab(s_i \sigma w_0)$, which is equivalent that $\hatQtab(\sigma w_0) = \hatQtab(s_i \sigma w_0)$.
Also, by \cref{lem: shape and SIT word}, we have  $\gamma_s \geq_{\rm lex} \gamma_t$.
Therefore, $s \geq t$.
\item 
Suppose that the rearrangements of $\gamma_s$ and $\gamma_t$ are different.
This assumption is equivalent to saying that the shapes of   $\Ptab(\sigma w_0)$ and $\Ptab(s_i \sigma w_0)$ are different.
Note that $\sigma w_0 \succeq_L s_i \sigma w_0$.
Therefore, by \cref{eq: left bruhat order and shape}, we deduce that 
\[
\lambda(\sh(Q_s)) = \sh(\Qtab(\sigma w_0)) \triangleleft \sh(\Qtab(s_i\sigma w_0)) = \lambda(\sh(Q_t)).
\]
As a result, we have \(Q_s >_{\calL} Q_t\), which implies that \(s > t\).
\end{itemize}

{\it Step 2}. 
For each \( 1 \leq k \leq l \), we claim that 
$\ch([M_k / M_{k-1}])= \hscrS_{\gamma_k}$.
Since $M_{k-1}$ and $M_{k}$ are submodules of \( B(I) \), it follows that  
\[
\ch([M_k/M_{k-1}]) = \ch([M_k]) - \ch([M_{k-1}]) = \sum_{\sigma \in B_k \setminus B_{k-1}} F_{\comp(\Des_L(\sigma))^\rmc}
\]  
(see \cref{eq: character of WBI}).
On the other hand, in view of \cref{prop: SIT and SET is closed under simeqM}, we see that the restriction
\begin{align*}
\underline{\rm AHM}\circ ({\Theta_V})^{-1}|_{B_k} \colon B_k \to \bigsqcup_{1 \leq i \leq k} \SYCT(\gamma_i) \times \{Q_i\}, \quad \sigma \mapsto (\hatPtab(\sigma w_0), \hatQtab(\sigma w_0))
\end{align*}
gives  a bijection and 
\begin{align}\label{eq: reestricted bijection two}  
\underline{\rm AHM}\circ ({\Theta_V})^{-1}(B_k \setminus B_{k-1}) = \SYCT(\gamma_k) \times \{Q_k\}
\end{align}
(for the definition of $\Theta_V$, see \cref{interval module structures of V and X}(1)).
Consequently, we have  
\begin{align*}  
\ch([M_k / M_{k-1}]) = \sum_{\sigma \in B_k \setminus B_{k-1}} F_{\comp(\Des_L(\sigma w_0))} = \hscrS_{\gamma_k},  
\end{align*}  
where the second is derived from \cref{eq: reestricted bijection two} and \cref{prop: left descent and Young descent}.

By {\it Step 1} and {\it Step 2}, we conclude that
\[
0 \subsetneq M_1 \subsetneq M_2 \subsetneq \cdots \subsetneq M_l = \sfB(I)
\]
forms a distinguished filtration with respect to  \( \{\hscrS_\beta \mid \beta \models n \} \).
\end{proof}

\begin{example}
Let $\alpha = (2,2,2)$.
Then, $\calV_\alpha \cong \sfB(I)$, where $I := [214365, 615243]_L$.
Note that $\{\hatQtab(\gamma w_0) \mid \gamma \in I \}$ is given by
\[
\left\{
Q_1 := \begin{array}{c} 
\begin{ytableau}
6\\
5\\
1 & 2& 3& 4
\end{ytableau}
\end{array}, \;
Q_2 := \begin{array}{c} 
\begin{ytableau}
5 & 6\\
3 \\
1 & 2& 4
\end{ytableau}
\end{array}, \;
Q_3 := \begin{array}{c} 
\begin{ytableau}
5\\
3 & 4 & 6\\
1 & 2
\end{ytableau}
\end{array}, \;
Q_4 := \begin{array}{c} 
\begin{ytableau}
5 & 6 \\
3 & 4 \\
1 & 2
\end{ytableau}
\end{array}
\right\}.
\]
Following the method presented in the proof of ~\cref{thm: dist filt for Va}, we will construct a distinguished filtration of $\sfB(I)$ with respect to $\{\hscrS_{\alpha} \mid \alpha \models  6\}$.
For $1 \leq i \leq 4$, let
\[
A_i := \{\sh(\hatPtab(\sigma w_0)) \mid \sigma \in I \text{ with } \hatQtab(\sigma w_0) = Q_i \}.
\]
We see that
\[
 A_1 = \{(1,1,4) \}, \;
 A_2 = \{(1,2,3), (2,1,3)\}, \;
 A_3 = \{(1,2,3), (1,3,2) \}, \;
A_4 = \{(2,2,2) \}.
\]
For $1 \leq i \leq 4$ and $\alpha \in A_i$, let
\[
B'_{i; \alpha} : = \{\sigma \in [214365, 615243]_L \mid \sh(\hatPtab(\sigma w_0)) = \alpha \text{ and } \hatQtab(\sigma w_0) = Q_i  \}
\]
and 
\[
B_{i;\alpha} := \left(\bigsqcup_{
\substack{1 \leq t < i \\ \beta \in A_t}} B'_{t; \beta}\right) 
\; \bigsqcup \; \left(\bigsqcup_{\substack{\beta \in A_i \\
\text{with $\beta \leq_{\rmlex} \alpha$}}} B'_{i; \beta}\right)\,\,.
\]
Then, 
\[
0 = \C B_{1;(1,1,4)}  \subsetneq \C B_{2;(3,2,1)} \subsetneq \C B_{2;(2,3,1)} \subsetneq \C B_{3;(3,2,1)} \subsetneq \C B_{3;(2,1,3)} \subsetneq \C B_{4;(2,2,2)} = \sfB(I)
\]
is a distinguished  filtration of $\sfB(I)$ with respect to $\{\hscrS_\beta \mid \beta \models 6 \}$.

\begin{figure}[t]
\[
\def \vp {1.3}
\def \hp {2.5}
\begin{tikzpicture}
%% dotted line
\node[left] at (-0.3*\hp, -6*\vp) {\textcolor{red}{\small $B'_{1;(1,1,4)}$}};
\draw[red, dashed, thick] (-0.3*\hp, -5.7*\vp) -- (0.3*\hp, -5.7*\vp) -- (0.3*\hp, -6.3*\vp) -- (-0.3*\hp, -6.3*\vp) -- (-0.3*\hp, -5.7*\vp); 

\node[right] at (1.8*\hp, -5*\vp) {\textcolor{blue}{\small $B'_{3;(1,2,3)}$}};
\draw[blue, dashed, thick] (0.7*\hp, -5.3*\vp) -- (1.3*\hp, -5.3*\vp) -- (2.3*\hp, -4.3*\vp) -- (2.3*\hp, -3.7*\vp) -- (-0.3*\hp, -3.7*\vp) -- (-0.3*\hp, -4.3*\vp) -- (0.7*\hp, -5.3*\vp);

\node[left] at (-2.3*\hp, -4*\vp) {\textcolor{teal}{\small $B'_{2;(1,2,3)}$}};
\draw[teal, dashed, thick]
(-0.7*\hp, -5.3*\vp) -- 
(-0.7*\hp, -4.8*\vp) --
(-1.7*\hp, -4*\vp) --
(-1.7*\hp, -2.7*\vp) --
(-2.3*\hp, -2.7*\vp) --
(-2.3*\hp, -4.3*\vp)--
(-1.3*\hp, -5.3*\vp) -- (-0.7*\hp, -5.3*\vp);

\node[left] at (-2.3*\hp, -2*\vp) {\textcolor{brown}{\small $B'_{2;(2,1,3)}$}};
\draw[brown, dashed, thick]
(-1.7*\hp, -2.3*\vp) -- (-2.3*\hp, -2.3*\vp) -- (-2.3*\hp, -1.7*\vp) -- (-1.7*\hp, -1.7*\vp) -- (-1.7*\hp, -2.3*\vp);

\node[right] at (2.2*\hp, -1.5*\vp) {\textcolor{violet}{\small $B'_{3;(1,3,2)}$}};
\draw[violet, dashed, thick] (2.3*\hp, -3.3*\vp) -- (1.7*\hp, -3.3*\vp) -- (1.7*\hp, -1.7*\vp) -- (2.3*\hp, -1.7*\vp) -- (2.3*\hp, -3.3*\vp);

\node[left] at (-0.7*\hp, 0*\vp) {\textcolor{magenta}{\small $B'_{4;(2,2,2)}$}};
\draw[magenta, dashed, thick] 
(0.3*\hp, -3.3*\vp) -- (0.3*\hp, -2*\vp)-- (1.3*\hp, -1.3*\vp) -- (1.3*\hp, -0.7*\vp)--
(0.3*\hp, 0.3*\vp)-- (-0.3*\hp, 0.3*\vp) -- (-1.3*\hp, -0.7*\vp)-- (-1.3*\hp, -1.3*\vp) -- (-0.3*\hp, -2*\vp) -- (-0.3*\hp, -3.3*\vp) -- (0.3*\hp, -3.3*\vp);

%%%node
\node[] at (0*\hp, 0*\vp) {$214365$};

\node[] at (-1*\hp, -1*\vp) {$215364$};
\node[] at (1*\hp, -1*\vp) {$314265$};

\node[] at (-2*\hp, -2*\vp) {$216354$};
\node[] at (0*\hp, -2*\vp) {$315264$};
\node[] at (2*\hp, -2*\vp) {$413265$};

\node[] at (-2*\hp, -3*\vp) {$316254$};
\node[] at (-0*\hp, -3*\vp) {$415263$};
\node[] at (2*\hp, -3*\vp) {$513264$};

\node[] at (-2*\hp, -4*\vp) {$416253$};
\node[] at (0*\hp, -4*\vp) {$514263$};
\node[] at (2*\hp, -4*\vp) {$613254$};

\node[] at (-1*\hp, -5*\vp) {$516243$};
\node[] at (1*\hp, -5*\vp) {$614253$};

\node[] at (0*\hp, -6*\vp) {$615243$};

%%%%%%%%%%%%%%%%%%loop%%%%%%%%%%%%%%%%%%%%%%%%%%%%
\node at (0*\hp + 0.2*\hp, 0*\vp) {} edge [out=40,in=320, loop] ();
\node[right] at (0.4*\hp, 0*\vp) {\footnotesize $\pi_1, \pi_3, \pi_5$};

\node at (-1*\hp + 0.2*\hp, -1*\vp) {} edge [out=40,in=320, loop] ();
\node[right] at (-1*\hp + 0.4*\hp, -1*\vp) {\footnotesize $\pi_1, \pi_4$};
\node at (1*\hp + 0.2*\hp, -1*\vp) {} edge [out=40,in=320, loop] ();
\node[right] at (1*\hp + 0.4*\hp, -1*\vp) {\footnotesize $\pi_2, \pi_5$};

\node at (-2*\hp + 0.2*\hp, -2*\vp) {} edge [out=40,in=320, loop] ();
\node[right] at (-2*\hp + 0.4*\hp, -2*\vp) {\footnotesize $\pi_1, \pi_4, \pi_5$};
\node at (0*\hp + 0.2*\hp, -2*\vp) {} edge [out=40,in=320, loop] ();
\node[right] at (0*\hp + 0.4*\hp, -2*\vp) {\footnotesize $\pi_2, \pi_4$};
\node at (2*\hp + 0.2*\hp, -2*\vp) {} edge [out=40,in=320, loop] ();
\node[right] at (2*\hp + 0.4*\hp, -2*\vp) {\footnotesize $\pi_2, \pi_3, \pi_5$};

\node at (-2*\hp + 0.2*\hp, -3*\vp) {} edge [out=40,in=320, loop] ();
\node[right] at (-2*\hp + 0.4*\hp, -3*\vp) {\footnotesize $\pi_2, \pi_4, \pi_5$};
\node at (0*\hp + 0.2*\hp, -3*\vp) {} edge [out=40,in=320, loop] ();
\node[right] at (0*\hp + 0.4*\hp, -3*\vp) {\footnotesize $\pi_3$};
\node at (2*\hp + 0.2*\hp, -3*\vp) {} edge [out=40,in=320, loop] ();
\node[right] at (2*\hp + 0.4*\hp, -3*\vp) {\footnotesize $\pi_1, \pi_2$};

\node at (-2*\hp + 0.2*\hp, -4*\vp) {} edge [out=40,in=320, loop] ();
\node[right] at (-2*\hp + 0.4*\hp, -4*\vp) {\footnotesize $\pi_3, \pi_5$};
\node at (0*\hp + 0.2*\hp, -4*\vp) {} edge [out=40,in=320, loop] ();
\node[right] at (0*\hp + 0.4*\hp, -4*\vp) {\footnotesize $\pi_3, \pi_4$};
\node at (2*\hp + 0.2*\hp, -4*\vp) {} edge [out=40,in=320, loop] ();
\node[right] at (2*\hp + 0.4*\hp, -4*\vp) {\footnotesize $\pi_2, \pi_4, \pi_5$};

\node at (-1*\hp + 0.2*\hp, -5*\vp) {} edge [out=40,in=320, loop] ();
\node[right] at (-1*\hp + 0.4*\hp, -5*\vp) {\footnotesize $\pi_3, \pi_4$};
\node at (1*\hp + 0.2*\hp, -5*\vp) {} edge [out=40,in=320, loop] ();
\node[right] at (1*\hp + 0.4*\hp, -5*\vp) {\footnotesize $\pi_3, \pi_5$};

\node at (0*\hp + 0.2*\hp, -6*\vp) {} edge [out=40,in=320, loop] ();
\node[right] at (0*\hp + 0.4*\hp, -6*\vp) {\footnotesize $\pi_3, \pi_4, \pi_5$};

%%%%%%%%%%%%%%%%%%%%%%EDGE%%%%%%%%%%%%%%%%%%%%%%%%
\draw[->] (-0.2*\hp, -0.3*\vp) -- (-0.8*\hp, -0.7*\vp);
\node at (-0.6*\hp, -0.4*\vp) {\footnotesize $\pi_4$};
\draw[->] (0.2*\hp, -0.3*\vp) -- (0.8*\hp, -0.7*\vp);
\node at (0.6*\hp, -0.4*\vp) {\footnotesize $\pi_2$};

%%%%
\draw[->] (-1.2*\hp, -1.3*\vp) -- (-1.8*\hp, -1.7*\vp);
\node at (-1.6*\hp, -1.4*\vp) {\footnotesize $\pi_5$};
\draw[->] (-0.8*\hp, -1.3*\vp) -- (-0.2*\hp, -1.7*\vp);
\node at (-0.4*\hp, -1.4*\vp) {\footnotesize $\pi_2$};

\draw[->] (0.8*\hp, -1.3*\vp) -- (0.2*\hp, -1.7*\vp);
\node at (0.4*\hp, -1.4*\vp) {\footnotesize $\pi_4$};

\draw[->] (1.2*\hp, -1.25*\vp) -- (1.8*\hp, -1.7*\vp);
\node at (1.6*\hp, -1.4*\vp) {\footnotesize $\pi_3$};
%%%%
\draw[->] (-2*\hp, -2.25*\vp) -- (-2*\hp, -2.7*\vp);
\node at (-2.1*\hp, -2.45*\vp) {\footnotesize $\pi_2$};

\draw[->] (-0.2*\hp, -2.25*\vp) -- (-1.8*\hp, -2.7*\vp);
\node at (-1.25*\hp, -2.45*\vp) {\footnotesize $\pi_5$};
\draw[->] (0*\hp, -2.25*\vp) -- (0*\hp, -2.7*\vp);
\node at (-0.1*\hp, -2.45*\vp) {\footnotesize $\pi_3$};

\draw[->] (2*\hp, -2.25*\vp) -- (2*\hp, -2.7*\vp);
\node at (2.1*\hp, -2.45*\vp) {\footnotesize $\pi_4$};

%%%%
\draw[->] (-2*\hp, -3.25*\vp) -- (-2*\hp, -3.7*\vp);
\node at (-2.1*\hp, -3.45*\vp) {\footnotesize $\pi_3$};

\draw[->] (0*\hp, -3.25*\vp) -- (0*\hp, -3.7*\vp);
\node at (0.1*\hp, -3.45*\vp) {\footnotesize $\pi_4$};
\draw[->] (-0.2*\hp, -3.25*\vp) -- (-1.8*\hp, -3.7*\vp);
\node at (-1.25*\hp, -3.45*\vp) {\footnotesize $\pi_5$};

\draw[->] (2*\hp, -3.25*\vp) -- (2*\hp, -3.7*\vp);
\node at (2.1*\hp, -3.45*\vp) {\footnotesize $\pi_5$};
\draw[->] (1.8*\hp, -3.25*\vp) -- (0.2*\hp, -3.7*\vp);
\node at (0.8*\hp, -3.45*\vp) {\footnotesize $\pi_3$};

%%%%
\draw[->] (-1.8*\hp, -4.25*\vp) -- (-1.2*\hp, -4.7*\vp);
\node at (-1.6 * \hp, -4.55*\vp) {\footnotesize $\pi_4$};

\draw[->] (0.2*\hp, -4.25*\vp) -- (0.8*\hp, -4.7*\vp);
\node at (0.4 * \hp, -4.55*\vp) {\footnotesize $\pi_5$};

\draw[->] (1.8*\hp, -4.25*\vp) -- (1.2*\hp, -4.7*\vp);
\node at (1.62 * \hp, -4.55*\vp) {\footnotesize $\pi_3$};

%%%
\draw[->] (-0.8*\hp, -5.25*\vp) -- (-0.2*\hp, -5.7*\vp);
\node at (-0.6 * \hp, -5.55*\vp) {\footnotesize $\pi_5$};

\draw[->] (0.8*\hp, -5.25*\vp) -- (0.2*\hp, -5.7*\vp);
\node at (0.6 * \hp, -5.55*\vp) {\footnotesize $\pi_4$};
\end{tikzpicture}
\]
\caption{The $H_6(0)$-action on the basis $[214365, 615243
]_L$ for $\sfB(214365, 615243) \cong \calV_{(2,2,2)}$ and the sets $B'_{i;\alpha}$'s}
\label{fig: figure for example}
\end{figure}
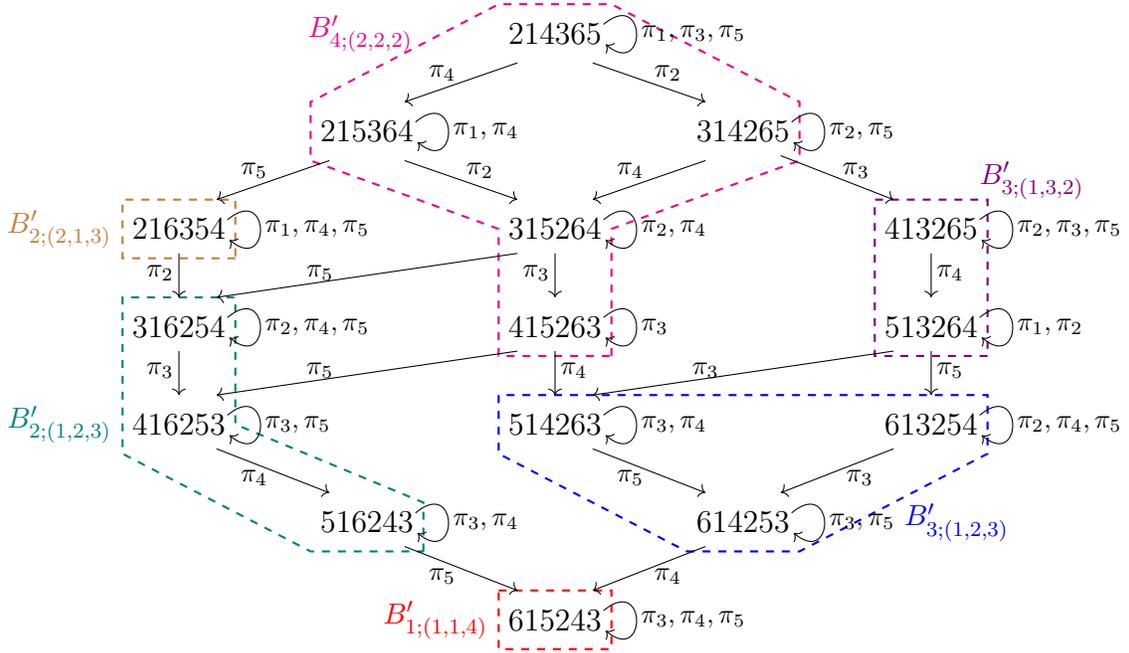
\end{example}

\subsection{Distinguished filtrations of  $X_\alpha$}
\label{Distinguished filtrations of X}

In this subsection, \(\alpha\) is assumed to be a composition of \(n\)
that is a shuffle of a partition and \((1^k)\) for some $k \geq 0$.
Under this assumption, Marcum and Niese \cite{24MN} showed that restricting the domain of the bijection \(\underline{\rm AHM}\) given in \cref{eq: 2018 AHM map} to \(\SET(\alpha)\) induces the bijection
\[
\underline{\rm AHM}|_{\SET(\alpha)}:  \SET(\alpha) \to \bigsqcup_{\beta \models n} \SYCT(\beta) \times {\rm DIRT}^*(\beta, \alpha^\rmr),
\]
where \({\rm DIRT}^*(\beta, \alpha^\rmr)\) is the set of DIRTs in \({\rm DIRT}(\beta, \alpha^\rmr)\) that have no {\it exceptions} (for the precise definition of \({\rm DIRT}^*(\beta, \alpha^\rmr)\), see \cite[Definition 4.7]{24MN}).
Using this, they \cite[Theorem 3.5]{24MN} proved that $\calE_\alpha$  expands positively in \(\hcalS\).

Combined with the $\C$-linear isomorphism in \cref{interval module structures of V and X}(2), the bijection $\underline{\rm AHM}|_{\SET(\alpha)}$ and \cref{eq: relationship between Q' and hatQ} yield the following lemma.

\begin{lemma}\label{prop: SET is closed under simeqM}
If \(\alpha\) is a composition of \(n\) obtained by shuffling a partition and \((1^k)\) for some \(k \geq 0\), then
$[\sfrow(\sfT_\alpha), \sfrow(\sfT'_\alpha)]_L \cdot w_0$ is closed under $\simeq_M$.
\end{lemma}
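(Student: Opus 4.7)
The plan is to follow verbatim the strategy used for $[\sfrow(\calT_\alpha),\sfrow(\calT'_\alpha)]_L\cdot w_0$ in \cref{prop: SIT and SET is closed under simeqM}, substituting each of its three ingredients by a $\SET$-analogue. Namely, I will combine: (i) Marcum--Niese's restricted bijection
\[
\underline{\rm AHM}|_{\SET(\alpha)}\colon\SET(\alpha)\to\bigsqcup_{\beta\models n}\SYCT(\beta)\times{\rm DIRT}^*(\beta,\alpha^\rmr),
\]
which is valid precisely under the shuffle hypothesis on $\alpha$; (ii) the $\C$-linear isomorphism $\Theta_X$ of \cref{interval module structures of V and X}(2), which sends $\sfT\in\SET(\alpha)$ to $\sfrow(\sfT)\in[\sfrow(\sfT_\alpha),\sfrow(\sfT'_\alpha)]_L$; and (iii) the observation \cref{eq: relationship between Q' and hatQ} that $\hat{Q}(\cdot)$ refines $\hatQtab(\cdot)$.

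Combining (i) and (ii) yields the characterization
\[
\sigma\in[\sfrow(\sfT_\alpha),\sfrow(\sfT'_\alpha)]_L\cdot w_0 \quad\Longleftrightarrow\quad \hat{Q}(\sigma)\in{\rm DIRT}^*(\sh(\hatPtab(\sigma)),\alpha^\rmr).
\]
Given $\sigma\simeq_M\rho$ with $\sigma$ in the interval, the equalities $\hatQtab(\sigma)=\hatQtab(\rho)$ and $\sh(\hatPtab(\sigma))=\sh(\hatPtab(\rho))$, together with (iii), let me transfer the ${\rm DIRT}^*$-structure from $\hat{Q}(\sigma)$ to $\hat{Q}(\rho)$, and hence conclude that $\rho$ also lies in the interval.

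The main obstacle is exactly this transfer step. Since the map $\hat{Q}\mapsto\hatQtab$ is not in general injective (as the remark following \cref{prop: SIT and SET is closed under simeqM} illustrates), knowing only $\hatQtab$ does not a priori pin down $\hat{Q}$. I expect to have to verify that, under the shuffle-with-$(1^k)$ hypothesis on $\alpha$, the combination of the row-strip shape $\alpha^\rmr$ and the ``no exception'' condition cutting out ${\rm DIRT}^*(\beta,\alpha^\rmr)$ is rigid enough that membership in ${\rm DIRT}^*(\beta,\alpha^\rmr)$ is detectable from the pair $(\sh(\hatPtab(\sigma)),\hatQtab(\sigma))$ alone. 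This is the same rigidity that is used (implicitly) in the $\SIT$ case for ${\rm DIRT}(\beta,\alpha^\rmr)$; the added exception-freeness constraint is automatic once one restricts from $\SIT(\alpha)$ to $\SET(\alpha)$, and so no new combinatorial input beyond \cite{24MN} should be required.
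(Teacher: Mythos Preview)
Your three ingredients are exactly those the paper uses, and the overall approach matches. The only issue is the ``main obstacle'' paragraph: you correctly note that \cref{eq: relationship between Q' and hatQ} only goes one way, but the resolution is not a separate rigidity verification --- it is a cardinality argument that you have essentially already set up. Namely, for $\sigma$ in the interval with $\beta=\sh(\hatPtab(\sigma))$ and $Q=\hat Q(\sigma)\in{\rm DIRT}^*(\beta,\alpha^\rmr)$, the fiber $\{\rho\in\SG_n:\hat Q(\rho)=Q\}$ is contained in the $\simeq_M$-class of $\sigma$ (by \cref{eq: relationship between Q' and hatQ} together with $\sh(\hatPtab(\rho))=\sh(\hat Q(\rho))=\beta$), and it is contained in the interval (by the ``only if'' direction of the characterization you wrote, using that $\underline{\rm AHM}|_{\SET(\alpha)}$ is surjective onto the ${\rm DIRT}^*$ side). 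Both sets have cardinality $|\SYCT(\beta)|$: the fiber by the restricted AHM bijection, and the $\simeq_M$-class by the bijection \cref{eq: pseudo RSK corr}. Hence they coincide, and the $\simeq_M$-class lies in the interval.

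This is precisely the implicit step in the $\SIT$ case as well, so your remark that ``no new combinatorial input beyond \cite{24MN} should be required'' is correct --- but the mechanism is counting via bijectivity, not a structural detectability check on $\hat Q$ from $\hatQtab$. Once you replace your last paragraph with this counting argument, the proof is complete and coincides with the paper's.
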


Moreover, since \(\SET(\alpha) \subseteq \SIT(\alpha)\), \cref{lem: first col of SIT} and \cref{lem: shape and SIT word} remain valid. 
Combining these lemmas with \cref{prop: SET is closed under simeqM}, we can derive the following theorem using the same approach as in the proof of \cref{thm: dist filt for Va}.
It should be noted that, in this case, \(R\) in \eqref{eq: def of R two parameters} can be described more simply. 
Specifically, it consists only of \(\gamma_i\)'s rather than \((Q_i, \gamma_i)\)'s.

\begin{theorem}\label{thm: dist filt for Xa}
If \(\alpha\) is a composition of \(n\) obtained by shuffling a partition and \((1^k)\) for some \(k \geq 0\), then
$X_\alpha$ has a distinguished filtration with respect to $\{\hscrS_\beta \mid \beta \models n \}$.
\end{theorem}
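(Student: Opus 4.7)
The plan is to mimic the proof of \cref{thm: dist filt for Va}, substituting the standard extended tableaux $\SET(\alpha)$ for standard immaculate tableaux and invoking \cref{prop: SET is closed under simeqM} in place of \cref{prop: SIT and SET is closed under simeqM}. By \cref{interval module structures of V and X}(2), it suffices to construct a distinguished filtration of $\sfB(I)$ with respect to $\{\hscrS_\beta \mid \beta \models n\}$, where $I := [\sfrow(\sfT_\alpha), \sfrow(\sfT'_\alpha)]_L$. As noted in the paragraph preceding the theorem, the simplification here is that the index set $R$ reduces from pairs $(Q_i,\gamma_i)$ to shapes $\gamma_i$ alone, since the restricted bijection $\underline{\mathrm{AHM}}|_{\SET(\alpha)}$ controls the recording data through ${\rm DIRT}^*(\beta,\alpha^{\rmr})$ in such a way that, within $I \cdot w_0$, the shape $\gamma = \sh(\hatPtab(\sigma w_0))$ effectively determines the equivalence class under $\simeq_M$ relevant to the construction.

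Concretely, I would first enumerate the distinct shapes arising in $\{\sh(\hatPtab(\sigma w_0)) \mid \sigma \in I\}$ as $\gamma_1, \gamma_2, \ldots, \gamma_l$, ordered by a total order $\ll$ that refines (i) the reverse dominance order on the partition rearrangements $\lambda(\gamma_i)$ and (ii) the lexicographic order $\le_{\rmlex}$ among compositions with the same rearrangement. Then I would set
\[
B_k := \{\sigma \in I \mid \sh(\hatPtab(\sigma w_0)) = \gamma_j \text{ for some } 1 \le j \le k\},\qquad M_k := \C B_k,
\]
with $B_0 := \emptyset$ and $B_l = I$.

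The key step is to verify that each $M_k$ is an $H_n(0)$-submodule of $\sfB(I)$, and this is the point I expect will require the most care. Following \emph{Step 1} of the proof of \cref{thm: dist filt for Va}, let $\sigma \in B_k$ with $\pi_i \cdot \sigma = s_i\sigma \in I$ and $\ell(s_i\sigma) > \ell(\sigma)$, so $\sigma w_0 \succeq_L s_i\sigma w_0$. If $\sh(\Ptab(\sigma w_0)) = \sh(\Ptab(s_i\sigma w_0))$, then by \eqref{eq: left bruhat order and shape} we have $\Qtab(\sigma w_0) = \Qtab(s_i\sigma w_0)$, so $\sigma w_0 \simeq_M s_i\sigma w_0$ modulo shape by \cref{prop: SET is closed under simeqM}, and \cref{lem: shape and SIT word} (which still applies since $\SET(\alpha) \subseteq \SIT(\alpha)$) gives $\gamma_k \ge_{\rmlex} \sh(\hatPtab(s_i\sigma w_0))$ within a fixed rearrangement class, forcing $s_i\sigma \in B_k$. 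Otherwise, \eqref{eq: left bruhat order and shape} forces $\sh(\Qtab(\sigma w_0)) \triangleleft \sh(\Qtab(s_i\sigma w_0))$ in dominance, so the rearrangement of $\sh(\hatPtab(s_i\sigma w_0))$ is strictly larger than that of $\gamma_k$ in dominance; our choice of $\ll$ then places $\sh(\hatPtab(s_i\sigma w_0))$ strictly before $\gamma_k$, again giving $s_i\sigma \in B_k$.

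Finally, to identify the quotients, I would use \cref{prop: SET is closed under simeqM} together with the restricted bijection $\underline{\mathrm{AHM}}|_{\SET(\alpha)}$ to obtain a bijection
\[
B_k \setminus B_{k-1} \;\longleftrightarrow\; \SYCT(\gamma_k) \times \{Q_k\}
\]
for a suitable $Q_k \in {\rm DIRT}^*(\gamma_k, \alpha^{\rmr})$. Combining this with \cref{prop: left descent and Young descent} and the $F$-expansion formula \eqref{eq: character of WBI} yields
\[
\ch([M_k/M_{k-1}]) = \sum_{\sigma \in B_k \setminus B_{k-1}} F_{\comp(\Des_L(\sigma))^{\rmc}} = \sum_{T \in \SYCT(\gamma_k)} F_{\comp(\Des_{\hscrS}(T))} = \hscrS_{\gamma_k},
\]
establishing the desired distinguished filtration. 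The main obstacle, as indicated above, is justifying the claim that the index set $R$ in \eqref{eq: def of R two parameters} collapses to shapes alone in the SET setting; this uses the specific structure of ${\rm DIRT}^*(\beta,\alpha^{\rmr})$ established in \cite{24MN} for $\alpha$ obtained by shuffling a partition with $(1^k)$.
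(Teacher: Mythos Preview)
Your overall architecture is correct and matches the paper's: reduce to $\sfB(I)$ via \cref{interval module structures of V and X}(2), order the shapes, define $B_k$ by shape cutoffs, verify each $M_k = \C B_k$ is a submodule using \eqref{eq: left bruhat order and shape} and \cref{lem: shape and SIT word}, and identify the quotient characters. The submodule verification you give (handling both the same-rearrangement and different-rearrangement cases) is valid.

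The genuine gap is exactly the one you flag at the end. Your quotient step requires that
\[
B_k \setminus B_{k-1} = \{\sigma \in I \mid \sh(\hatPtab(\sigma w_0)) = \gamma_k\}
\]
is a \emph{single} $\simeq_M$-class, so that its character is $\hscrS_{\gamma_k}$ rather than a multiple of it. Invoking ``the specific structure of ${\rm DIRT}^*(\beta,\alpha^{\rmr})$'' from \cite{24MN} does not by itself give $|{\rm DIRT}^*(\gamma_k,\alpha^{\rmr})| = 1$; that reference only supplies the bijection, not the cardinality. The paper closes this gap with a short direct argument that does not touch ${\rm DIRT}^*$ at all: for any $T \in \SET(\alpha)$, each row of $T$ is increasing and each increasing subsequence of $\sfrow(T)w_0$ meets each column of $T$ at most once, so $\fraki_k(\sfrow(T)w_0) = \lambda_1 + \cdots + \lambda_k$ with $\lambda = \lambda(\alpha)$, hence $\sh(\Ptab(\sigma w_0)) = \lambda$ for \emph{every} $\sigma \in I$. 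Then \eqref{eq: left bruhat order and shape} forces $\Qtab(\sigma w_0)$ to be constant on $I$, whence $\hatQtab(\sigma w_0) = \sfT_{\alpha^\rmr}$ for all $\sigma \in I$. This simultaneously (i) justifies collapsing $R$ to shapes alone, (ii) makes your ``different rearrangement'' case vacuous, and (iii) guarantees each $B_k \setminus B_{k-1}$ is a single $\simeq_M$-class. Once you insert this observation, your proof is complete and essentially identical to the paper's.
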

\begin{proof}
By \cref{interval module structures of V and X}(2), the assertion is equivalent to stating that \( \sfB(\sfrow(\sfT_\alpha), \sfrow(\sfT'_\alpha)) \) has a distinguished filtration with respect to \( \{\hscrS_\beta \mid \beta \models n \} \).
For simplicity, set $I := [\sfrow(\sfT_\alpha), \sfrow(\sfT'_\alpha)]_L$
and $\lambda := \lambda(\alpha)$.

First, we show that 
\begin{equation}\label{eq: same Qtab in Xal}
\hatQtab(\sigma w_0) = \sfT_{\alpha^\rmr} \,\text{ for all $\sigma \in I$.}
\end{equation}
For this purpose, we begin by observing that 
\begin{equation}\label{eq: shape of SET word}
    \sh(\Ptab(\sigma w_0)) =  \lambda \,\text{ for all $\sigma \in I$}.
\end{equation}
Let $\sigma \in I$. 
Then $\sigma = \sfrow(T)$ for some $T \in \SET(\alpha)$. 
One can see that for $1 \leq k \leq n$,  
\[
\fraki_k(\sigma w_0) = \lambda_1 + \cdots+ \lambda_k. \]
Indeed, the inequality $\ge$ follows from the fact that every row of $T$ increases from left to right, while the opposite inequality $\le$ follows from the fact that every increasing subsequence of $\sigma w_0$ contains at most one entry in each column of $T$.
Combining this equality with \cref{thm: Greene k-dec}, we see that $\sh(\Ptab(\sigma w_0)) = \lambda$, as required. 
Since $I \cdot w_0$ is a left weak Bruhat interval, by \cref{eq: shape of SET word} and \cref{eq: left bruhat order and shape}, we have 
\[
\Qtab(\sigma w_0) = \Qtab(\sfrow(T) w_0) \,\text{ for any $\sigma \in I$}.
\]
Now, due to \cref{eq: due to Mason prop 1} and \cref{eq: due to Mason prop 2}, \cref{eq: same Qtab in Xal} follows from 
$\hatQtab(\sfrow(\sfT_\alpha) w_0) = \sfT_{\alpha^\rmr}$.

Second, we define $B_k$ for each \(1 \leq k \leq l\).
Let $R = \{\sh(\hatPtab(\sigma w_0)) \mid \sigma \in I\}$.
Enumerate the elements in \( R \) in increasing order according to the lexicographic order \( \leq_{\rmlex} \) so that
\[
R = \{\gamma_1 <_{\rmlex}\gamma_2 <_{\rmlex} \cdots <_{\rmlex} \gamma_l  \}.
\]
Now, for each \(1 \leq k \leq l\), define
\begin{equation*}
B_k := \{\sigma \in I \mid  \sh(\hatPtab(\sigma w_0)) = \gamma_i \;\text{for some} \; 1 \leq i \leq k\},
\end{equation*}
with \(B_0 = \emptyset\). 
In particular, $B_l = I$.
In view of \cref{eq: same Qtab in Xal}, we see that  
\[
B_k = \{\sigma \in I \mid  \sh(\hatPtab(\sigma w_0)) = \gamma_i \text{ and } \hatQtab(\sigma w_0) = \sfT_{\alpha^\rmr} \;\text{for some} \; 1 \leq i \leq k\}.
\]

Finally, following the same manner as the proof of \cref{thm: dist filt for Va}, 
we can see that for \(1 \leq k \leq l\), \(M_k := \mathbb{C} B_k\) is an \(H_n(0)\)-submodule of \(\sfB(I)\) and that
\[
0 \subsetneq M_1 \subsetneq M_2 \subsetneq \cdots \subsetneq M_l = \sfB(I)
\]
forms a distinguished filtration with respect to  \( \{\hscrS_\beta \mid \beta \models n \} \). 
\end{proof}

\begin{example}
Let $\alpha= (3, 1, 2)$. 
Then, $I := [\sfrow(\sfT_\alpha), \sfrow(\sfT'_\alpha)]_L = [321465, 641253]_L$.
And, for every $\sigma \in I$, we have  
\[
\hatQtab(\sigma w_0)=\begin{array}{c}
\begin{ytableau}
4 & 5& 6\\
3\\
1 & 2
\end{ytableau}
\end{array}.
\]
Let
\[
R := \{\gamma_1 := (1,2,3) <_{\rmlex} \gamma_2 := (1, 3, 2)  <_{\rmlex} \gamma_3 := (2,1,3) <_{\rmlex} \gamma_4 := (3,1, 2) \}.
\]
Then, by \cref{thm: dist filt for Xa},
\[
0 \subsetneq \C B_{1}  \subsetneq \C B_{2} \subsetneq \C B_{3} \subsetneq \C B_{4} = \sfB(I)
\]
is the desired  distinguished  filtration of $\sfB(I)$ with respect to $\{\hscrS_\beta \mid \beta \models 6 \}$.

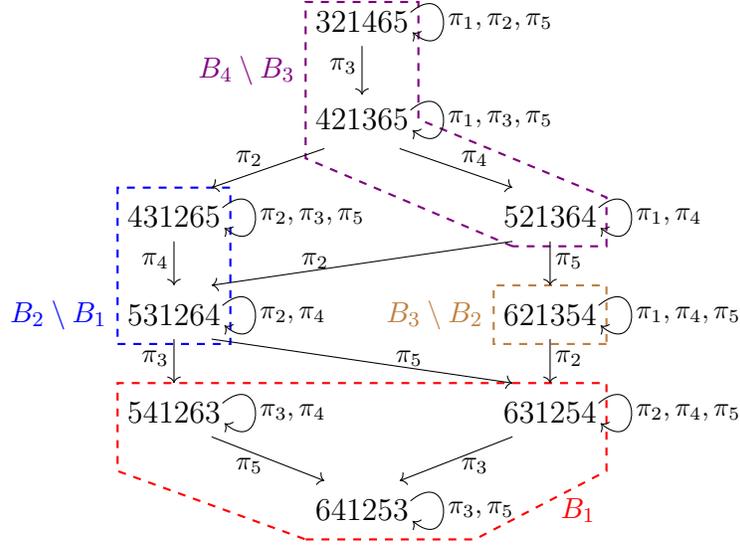
\begin{figure}[t]
\[
\def \vp {1.3}
\def \hp {2.5}
\begin{tikzpicture}
%dottedline

\node[right] at (0*\hp, -5*\vp) {\textcolor{red}{\small $B_1$}};
\draw[red, dashed, thick] (-1.3*\hp, -5.3*\vp) -- (-0.4*\hp, -5.3*\vp) -- ( 0.3*\hp,- 4.6*\vp) -- (0.3* \hp, -3.7 * \vp) -- (-2.3*\hp, -3.7*\vp) -- ( -2.3*\hp,- 4.6*\vp) -- (-1.3*\hp, -5.3*\vp);

\node[left] at (-2.3 * \hp, -3*\vp) {\textcolor{blue}{\small $B_2 \setminus B_1$}};
\draw[blue, dashed, thick] (-2.3*\hp, -3.3*\vp) -- (-1.7*\hp, -3.3*\vp) -- (-1.7*\hp, -1.7*\vp) -- (-2.3*\hp, -1.7*\vp) -- (-2.3*\hp, -3.3*\vp);

\node[left] at (-0.3 * \hp, -3*\vp) {\textcolor{brown}{\small $B_3 \setminus B_2$}};
\draw[brown, dashed, thick] (0.3*\hp, -3.3*\vp) -- (-0.3*\hp, -3.3*\vp) -- (-0.3*\hp, -2.7*\vp) -- (0.3*\hp, -2.7*\vp) -- (0.3*\hp, -3.3*\vp);

\node[left] at (-1.3
*\hp, -0.5*\vp) {\textcolor{violet}{\small $B_4 \setminus B_3$}};
\draw[violet, dashed, thick] (-0.2* \hp, -2.3 * \vp) -- (-1.3*\hp, -1.4*\vp) --(-1.3*\hp, 0.2*\vp) --(-0.7*\hp, 0.2*\vp) --(-0.7*\hp, -1*\vp)-- (0.3* \hp, -1.8 * \vp) -- (0.3* \hp, -2.3 * \vp) -- (-0.2* \hp, -2.3 * \vp);

%%%node
\node[] at (-1*\hp, 0*\vp) {$321465$};

\node[] at (-1*\hp, -1*\vp) {$421365$};

\node[] at (-2*\hp, -2*\vp) {$431265$};
\node[] at (0*\hp, -2*\vp) {$521364$};

\node[] at (-2*\hp, -3*\vp) {$531264$};
\node[] at (-0*\hp, -3*\vp) {$621354$};

\node[] at (-2*\hp, -4*\vp) {$541263$};
\node[] at (0*\hp, -4*\vp) {$631254$};

\node[] at (-1*\hp, -5*\vp) {$641253$};

%%%%%%%%%%%%%%%%%%loop%%%%%%%%%%%%%%%%%%%%%%%%%%%%
\node at (-1*\hp + 0.2*\hp, 0*\vp) {} edge [out=40,in=320, loop] ();
\node[right] at (0.4*\hp-1*\hp, 0*\vp) {\footnotesize $\pi_1, \pi_2, \pi_5$};

\node at (-1*\hp + 0.2*\hp, -1*\vp) {} edge [out=40,in=320, loop] ();
\node[right] at (-1*\hp + 0.4*\hp, -1*\vp) {\footnotesize $\pi_1, \pi_3, \pi_5$};

\node at (-2*\hp + 0.2*\hp, -2*\vp) {} edge [out=40,in=320, loop] ();
\node[right] at (-2*\hp + 0.4*\hp, -2*\vp) {\footnotesize $\pi_2, \pi_3, \pi_5$};
\node at (0*\hp + 0.2*\hp, -2*\vp) {} edge [out=40,in=320, loop] ();
\node[right] at (0*\hp + 0.4*\hp, -2*\vp) {\footnotesize $\pi_1, \pi_4$};

\node at (-2*\hp + 0.2*\hp, -3*\vp) {} edge [out=40,in=320, loop] ();
\node[right] at (-2*\hp + 0.4*\hp, -3*\vp) {\footnotesize $\pi_2, \pi_4$};
\node at (0*\hp + 0.2*\hp, -3*\vp) {} edge [out=40,in=320, loop] ();
\node[right] at (0*\hp + 0.4*\hp, -3*\vp) {\footnotesize $\pi_1, \pi_4, \pi_5$};

\node at (-2*\hp + 0.2*\hp, -4*\vp) {} edge [out=40,in=320, loop] ();
\node[right] at (-2*\hp + 0.4*\hp, -4*\vp) {\footnotesize $\pi_3, \pi_4$};
\node at (0*\hp + 0.2*\hp, -4*\vp) {} edge [out=40,in=320, loop] ();
\node[right] at (0*\hp + 0.4*\hp, -4*\vp) {\footnotesize $\pi_2, \pi_4, \pi_5$};

\node at (-1*\hp + 0.2*\hp, -5*\vp) {} edge [out=40,in=320, loop] ();
\node[right] at (-1*\hp + 0.4*\hp, -5*\vp) {\footnotesize $\pi_3, \pi_5$};

%%%%%%%%%%%%%%%%%%%%%%EDGE%%%%%%%%%%%%%%%%%%%%%%%%
\draw[->] (-1*\hp, -0.25*\vp) -- (-1*\hp, -0.75*\vp);
\node at (-1.1*\hp, -0.45*\vp) {\footnotesize $\pi_3$};

%%%%
\draw[->] (-1.2*\hp, -1.3*\vp) -- (-1.8*\hp, -1.7*\vp);
\node at (-1.6*\hp, -1.4*\vp) {\footnotesize $\pi_2$};
\draw[->] (-0.8*\hp, -1.3*\vp) -- (-0.2*\hp, -1.7*\vp);
\node at (-0.4*\hp, -1.4*\vp) {\footnotesize $\pi_4$};

%%%%
\draw[->] (-2*\hp, -2.25*\vp) -- (-2*\hp, -2.7*\vp);
\node at (-2.1*\hp, -2.45*\vp) {\footnotesize $\pi_4$};

\draw[->] (-0.2*\hp, -2.25*\vp) -- (-1.8*\hp, -2.7*\vp);
\node at (-1.25*\hp, -2.45*\vp) {\footnotesize $\pi_2$};
\draw[->] (0*\hp, -2.25*\vp) -- (0*\hp, -2.7*\vp);
\node at (0.1*\hp, -2.45*\vp) {\footnotesize $\pi_5$};

%%%%
\draw[->] (-2*\hp, -3.25*\vp) -- (-2*\hp, -3.7*\vp);
\node at (-2.1*\hp, -3.45*\vp) {\footnotesize $\pi_3$};

\draw[->] (-1.8*\hp, -3.25*\vp) -- (-0.2*\hp, -3.7*\vp);
\node at (-0.75*\hp, -3.45*\vp) {\footnotesize $\pi_5$};

\draw[->] (0*\hp, -3.25*\vp) -- (0*\hp, -3.7*\vp);
\node at (0.1*\hp, -3.45*\vp) {\footnotesize $\pi_2$};
%\draw[->] (-0.2*\hp, -3.25*\vp) -- (-1.8*\hp, -3.7*\vp);
%\node at (-1.25*\hp, -3.45*\vp) {\footnotesize $\pi_5$};

%%%%
\draw[->] (-1.8*\hp, -4.25*\vp) -- (-1.2*\hp, -4.7*\vp);
\node at (-1.6 * \hp, -4.55*\vp) {\footnotesize $\pi_5$};

%%%
\draw[->] (-0.2*\hp, -4.25*\vp) -- (-0.8*\hp, -4.7*\vp);
\node at (-0.4 * \hp, -4.55*\vp) {\footnotesize $\pi_3$};

\end{tikzpicture}
\]

\caption{The $H_6(0)$-action on the basis $[215436, 641254]_L$ for $\sfB(215436, 641254) \cong X_{(3,1,2)}$ and the sets $B_i \setminus B_{i-1}$'s}
\label{fig: figure for example X}
\end{figure}
\end{example}

\section{Indecomposable $0$-Hecke modules 
for Young quasisymmetric Schur functions and quasisymmetric Schur functions}
\label{Indecomposable modules for (Young) quasisymmetric Schur functions}

Given an important \(F\)-positive quasisymmetric function \(f\), constructing an indecomposable $H_n(0)$-module whose image under the quasisymmetric characteristic is \(f\) is a significant open problem. However, this problem remains unsolved for \(\scrS_{\alpha}\) and \(\hscrS_{\alpha}\). For example, \( \ch([\bfS_\alpha])=  \scrS_{\alpha}\), but $\bfS_\alpha$ is not, in general, indecomposable (see \cref{subsec: quasi Schur ftn}). 
The objective of this section is to construct and investigate an indecomposable \(H_n(0)\)-module \(\bfY_\alpha\) such that \(\ch([\bfY_\alpha])=\hscrS_\alpha\).
By taking the \(\upphi\)-twist of this module, we obtain an indecomposable \(H_n(0)\)-module whose image under the quasisymmetric characteristic is \(\scrS_\alpha\).

\subsection{Indecomposable $H_n(0)$-modules $\bfY_\alpha$ and $\upphi[\bfY_\alpha]$}\label{subsec: indecomp mod for YSal and Sal}
Let $\alpha \models n$  and let 
\begin{equation*}
0 = M_0 \subsetneq M_1 \subsetneq \cdots \subsetneq M_{l-1}  \subsetneq M_l = \sfB(\sfrow(\calT_\alpha), \sfrow(\calT'_\alpha))  \,\,(\cong \calV_\alpha)  
\end{equation*}
be a distinguished filtration with respect to $\{\hscrS_\alpha \mid \alpha \models n\}$,  constructed in the proof of \cref{thm: dist filt for Va}. 
A remarkable property of this filtration is that the submodule \( M_{l-1} \) is uniquely determined, independent of the choice of filtration.
Let us explain how this property is derived. 
Since $\sfrow(\calT_\alpha)$ is a generator of  $\sfB(\sfrow(\calT_\alpha), \sfrow(\calT'_\alpha))$, the minimal index $1 \leq i \leq l$ such that $\sfrow(\calT_\alpha) \in B_i$ is equal to $l$ (for the definition of $B_i$, see \cref{eq: def of set B(k)}).
Combining this with the fact that 
\[
\text{$\hatPtab(\sfrow(\calT_\alpha) w_0) = \calT_\alpha$ \,and\, 
$\hatQtab(\sfrow(\calT_\alpha) w_0) = \calT_{\alpha^\rmr}$,} 
\]
 we have  
$$B_l \setminus B_{l-1}  = \{\sigma \in [\sfrow(\calT_\alpha), \sfrow(\calT'_\alpha)]_L \mid \sh(\hatPtab(\sfrow(\sigma w_0))) = \alpha \text{ and } \hatQtab(\sfrow(\sigma w_0)) = \calT_{\alpha^\rmr}  \}.
$$
This shows that the submodule \(M_{l-1}\) is given by the \(\C\)-span of  
\begin{equation}\label{eq: basis for Mlm1}
\{\sigma \in [\sfrow(\calT_\alpha), \sfrow(\calT'_\alpha)]_L \mid \sh(\hatPtab(\sigma w_0)) \neq \alpha \; \text{or} \; \hatQtab(\sigma w_0) \neq \calT_{\alpha^\rmr}\}
\end{equation}
and it depends only on $\alpha$, not on the choice of filtration.

Now, define the $H_n(0)$-module $\bfY_\alpha$ by
$$\bfY_\alpha:= M_l/M_{l-1}.$$
We have a surjective $H_n(0)$-module homomorphism
\[
\delta: \calV_\alpha \stackrel{\Theta_V}{\to} \sfB(\sfrow(\calT_\alpha), \sfrow(\calT'_\alpha)) \stackrel{\rm pr}{\twoheadrightarrow} \bfY_\alpha,
\]
where ${\rm pr}: \sfB(\sfrow(\calT_\alpha), \sfrow(\calT'_\alpha)) \ra \bfY_\alpha$ is the natural projection.
By \cref{eq: basis for Mlm1}, $\bfY_\alpha$ can be viewed as the $H_n(0)$-module whose underlying space  is the \(\C\)-span of 
\begin{equation}\label{eq: def of K alpha}
\calK_\alpha := \{\sigma \in \SG_n \mid \sh(\hatPtab(\sigma w_0)) = \alpha \text{ and } \hatQtab(\sigma w_0) = \calT_{\alpha^\rmr}\},
\end{equation}
equipped with the \(H_n(0)\)-action defined as follows: for \(\sigma \in \calK_\alpha\) and \(1 \leq i \leq n-1\),
\[
\pi_i \cdot \sigma = 
\begin{cases}
\sigma & \text{if } i \in \Des_L(\sigma), \\
s_i \sigma & \text{if } i \notin \Des_L(\sigma) \text{ and } s_i \sigma \in \calK_\alpha, \\
0 & \text{otherwise.}
\end{cases}
\]
Since  $|{\rm DIRT}(\alpha, \alpha^\rmr)| = 1$ by \cite[Theorem 1.1]{18AHM},
the basis $\calK_\alpha$ can be rewritten as 
\begin{align}\label{eq: simple descript for Ga}
 \{\sfrow(\calT) \mid \calT \in \SIT(\alpha)  \text{ and }  \sh(\hatPtab(\sfrow(\calT) w_0)) = \alpha 
\}.
\end{align}

Recall that every automorphism of \(H_n(0)\) induces 
an equivalence on the category of finitely generated \(H_n(0)\)-modules.
Let \(\upphi: H_n(0) \to H_n(0)\) be the automorphism defined by \(\pi_i \mapsto \pi_{n-i}\).  
For an  \(H_n(0)\)-module \(M\), let \(\upphi[M]\) denote the \(\upphi\)-twist of \(M\).  
For more details, see \cite[Section 3.4]{22JKLO}.

\begin{theorem} \label{thm: new modules for quasisymmetric Schur and Young quasi Schur}
Let $\alpha$ be a composition of $n$.
\begin{enumerate}[label = {\rm (\arabic*)}]
    \item \(\bfY_\alpha \) is an indecomposable $H_n(0)$-module 
whose image under the quasisymmetric characteristic is 
$\hscrS_\alpha$.
\item 
The \(\upphi\)-twist \(\upphi[\bfY_\alpha]\) of \(\bfY_\alpha\) is an indecomposable \(H_n(0)\)-module whose image under the quasisymmetric characteristic is \(\scrS_\alpha\).
\end{enumerate}
\end{theorem}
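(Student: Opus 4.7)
For part (1), the character of $\bfY_\alpha$ can be read directly from the distinguished filtration built in the proof of \cref{thm: dist filt for Va}: by construction $\ch([M_l/M_{l-1}]) = \hscrS_{\gamma_l}$, so it suffices to identify $\gamma_l = \alpha$. Every weak Bruhat interval module is cyclic, generated by the minimum of its interval, so $\sfrow(\calT_\alpha)$ generates $\sfB(\sfrow(\calT_\alpha),\sfrow(\calT'_\alpha)) \cong \calV_\alpha$; if it lay in $M_{l-1}$ then $M_{l-1}$ would already equal $M_l$, contradicting strictness of the filtration. Hence $\sfrow(\calT_\alpha) \in B_l \setminus B_{l-1}$, so $\gamma_l = \sh(\hatPtab(\sfrow(\calT_\alpha)w_0))$. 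A short inspection shows that $\calT_\alpha$ is itself a Young composition tableau --- the Young triple rule holds vacuously because $\calT_\alpha(i,k{+}1) < \calT_\alpha(j,k)$ whenever $i<j$ and both cells lie in $\tcd(\alpha)$ --- so \cref{alg: insertion algorithm} applied to the immaculate reading word $\sfrow(\calT_\alpha)w_0$ reproduces $\calT_\alpha$. This forces $\gamma_l = \alpha$ and $\ch([\bfY_\alpha]) = \hscrS_\alpha$.

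For the indecomposability assertion in (1), the plan is to pin down the simple top of $\calV_\alpha$ and then transfer it to the quotient $\bfY_\alpha$. I would construct a surjective $H_n(0)$-homomorphism $f\colon \calV_\alpha \to \bfF_\alpha$ by $\calT_\alpha \mapsto v_\alpha$ and $T \mapsto 0$ for every other $T \in \SIT(\alpha)$. Well-definedness reduces to the claim that $\pi_i \cdot T \neq \calT_\alpha$ for every $T \in \SIT(\alpha) \setminus \{\calT_\alpha\}$ and every $i$; the only candidate is $T = s_i\calT_\alpha$, and a short case analysis on the two configurations of $i$ and $i+1$ in $\calT_\alpha$ (same row, or $i$ ending row $r$ with $i+1$ starting row $r+1$) shows that $i$ is always weakly above $i+1$ in $s_i\calT_\alpha$, so $\pi_i \cdot s_i\calT_\alpha \in \{s_i\calT_\alpha, 0\}$ and is never $\calT_\alpha$. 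Since $\calV_\alpha$ is finite-dimensional and indecomposable over the Artinian algebra $H_n(0)$, it has a simple top, which the surjection $f$ identifies with $\bfF_\alpha$; consequently every proper submodule of $\calV_\alpha$ lies in $\rad\calV_\alpha$. The nonzero quotient $\bfY_\alpha = \calV_\alpha/M_{l-1}$ therefore inherits the same simple top $\bfF_\alpha$, and is itself indecomposable. Pinning down the top of $\calV_\alpha$ is, in my view, the main obstacle; everything else is essentially formal.

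Part (2) then follows by transporting (1) through the automorphism $\upphi$. Since $\upphi$ is an algebra automorphism of $H_n(0)$, the functor $M \mapsto \upphi[M]$ is an auto-equivalence of $\Hnmod$ and in particular preserves indecomposability, so $\upphi[\bfY_\alpha]$ is indecomposable. A direct computation using $\upphi(\pi_i) = \pi_{n-i}$ together with $n - \set(\alpha) = \set(\alpha^\rmr)$ gives $\upphi[\bfF_\alpha] \cong \bfF_{\alpha^\rmr}$, and hence $\ch\circ\upphi = \uprho\circ\ch$ on the Grothendieck group. Combined with the identity $\scrS_\alpha = \uprho(\hscrS_\alpha)$ coming from the $\sfrevmap$-bijection $\SCT(\alpha) \leftrightarrow \SYCT(\alpha)$ and the descent correspondence $\Des_{\scrS}(T) = \{|\alpha| - i : i \in \Des_{\hscrS}(\sfrevmap(T))\}$ recorded in \cref{subsec: an analog of RSK}, this yields
\[
\ch([\upphi[\bfY_\alpha]]) \;=\; \uprho(\hscrS_\alpha) \;=\; \scrS_\alpha,
\]
completing the proof.
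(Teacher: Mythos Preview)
Your argument is correct, and for the quasisymmetric characteristic in (1) and all of (2) it tracks the paper's proof closely. The one substantive difference is how you handle indecomposability in (1).

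The paper does not compute the top of $\calV_\alpha$ directly. Instead it recalls from \cite{22CKNO} the surjection $\Phi:\overline{\bfP}_{\alpha^{\rmc}}\twoheadrightarrow\calV_\alpha$ from a projective indecomposable, composes with $\delta$ to get $\overline{\bfP}_{\alpha^{\rmc}}\twoheadrightarrow\bfY_\alpha$, and then invokes the general principle that every nonzero quotient of a projective indecomposable $H_n(0)$-module is indecomposable. Your route is more elementary and self-contained: you exhibit an explicit surjection $\calV_\alpha\twoheadrightarrow\bfF_\alpha$, conclude $\rmtop(\calV_\alpha)\cong\bfF_\alpha$ is simple, and then transfer this to the quotient $\bfY_\alpha$. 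The two arguments are really the same fact seen from opposite ends (a finitely generated module has simple top isomorphic to $S$ if and only if it is a nonzero quotient of the projective cover of $S$), but yours trades an external citation for a short case analysis. What the paper's route buys is brevity and the surjection $\overline{\bfP}_{\alpha^{\rmc}}\twoheadrightarrow\bfY_\alpha$, which it reuses later in \cref{subsec: A surjection series containing Yal}; what yours buys is that the reader need not look up \cite{22CKNO}.

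One small caution in (2): the $\sfrevmap$ bijection you cite goes $\SCT(\alpha)\to\SYCT(\alpha)$, while the paper's expansion of $\scrS_\alpha$ is indexed by $\SCT(\alpha^\rmr)$; together with $\hscrS_\alpha=\uprho(\scrS_{\alpha^\rmr})$ this literally gives $\uprho(\hscrS_\alpha)=\scrS_{\alpha^\rmr}$ rather than $\scrS_\alpha$. The paper's own one-line proof uses exactly the same ingredients ($\upphi[\bfF_\beta]\cong\bfF_{\beta^\rmr}$), so your approach matches it regardless; just make sure your indices are aligned with whatever convention you ultimately adopt.
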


\begin{proof}
(1) 
Note that \(\calK_\alpha \cdot w_0\) is an equivalence class under \(\simeq_M\) containing $\sfrow(\calT_\alpha) w_0$ and \(\ch([\bfY_\alpha])\) is given by
\[
\sum_{\sigma \in \calK_\alpha} F_{\comp(\Des_L(\sigma))^\rmc} \,\, \left(= \sum_{\sigma \in \calK_\alpha \cdot w_0} F_{\comp(\Des_L(\sigma))} \right).
\]
This shows that $\ch([\bfY_\alpha]) = \hscrS_\alpha$.

Next, we show that \(\bfY_\alpha\) is indecomposable. 
It was shown in \cite[Theorem 3.2]{22CKNO} that there exists a surjective \(H_n(0)\)-module homomorphism \(\Phi: \overline{\bfP}_{\alpha^{\rmc}} \to \calV_\alpha\). 
Here, \(\overline{\bfP}_{\alpha^{\rm c}}\) denotes the projective indecomposable \(H_n(0)\)-module indexed by \(\alpha^{\rm c}\) as defined in \cite[(2.2)]{22CKNO}. 
Then, we have a surjective \(H_n(0)\)-module homomorphism \(\delta \circ \Phi: \overline{\bfP}_{\alpha^{\rmc}} \to \bfY_\alpha\). 
The desired result follows from the general fact that every quotient of a projective indecomposable \(H_n(0)\)-module is indecomposable.

(2) 
Note that the automorphism \(\upphi\) induces an equivalence  on the category of finitely  generated \(H_n(0)\)-modules. 
Hence, \(\upphi[\bfY_\alpha]\) is indecomposable since \(\bfY_\alpha\) is indecomposable by (1). 
Furthermore, we  have \(\ch([\upphi[\bfY_\alpha]]) = \scrS_\alpha\) since \(\upphi[\bfF_\beta] \cong \bfF_{\beta^{\rm r}}\) (for instance, see \cite[Table 2]{22JKLO}).
\end{proof}

\subsection{A surjection series containing $\bfY_\alpha$}\label{subsec: A surjection series containing Yal}
Let $\alpha \models n$.
It was shown in \cite[Corollary 4.6]{22CKNO} that there exists a series of surjective \( H_n(0) \)-module homomorphisms given by
\begin{align*}
\overline{\bfP}_{\alpha^{\rmc}} \stackrel{\Phi}{\twoheadrightarrow} \calV_\alpha \stackrel{\Gamma}{\twoheadrightarrow} X_\alpha \stackrel{\tilde{\eta}}{\twoheadrightarrow} \hbfS_{\alpha, C},
\end{align*}
where 
\begin{itemize}
\item 
\(\Phi\) is the surjective \(H_n(0)\)-module homomorphism described in \cite[(3.2)]{22CKNO},  
\item \(\Gamma \) is the surjective \(H_n(0)\)-module homomorphism such that for $\calT \in \SIT(\alpha)$,
\begin{equation}\label{eq: def of the map Gamma}
\Gamma(\calT) = \begin{cases}
    \calT & \text{if $\calT \in \SET(\alpha)$,}\\
    0 & \text{otherwise}
\end{cases}    
\end{equation}
as given in \cite[(3.4)]{22CKNO},  
\item
\(\tilde{\eta}\) is the surjective \(H_n(0)\)-homomorphism such that
for $T \in \SET(\alpha)$,
\begin{equation}\label{eq: def of the map eta}
\tilde{\eta}(T) = \begin{cases}
    T & \text{if $T \in \SYCT(\alpha, C)$,}\\
    0 & \text{otherwise},
\end{cases}
\end{equation}
as detailed in \cite[Section 3.3 and Section 4.2]{22CKNO}. 
\end{itemize}

\begin{remark}
(1) In \cite{22CKNO}, the map \(\tilde{\eta}\) is defined using a different formulation. Nevertheless, it can be readily verified that this definition is equivalent to the one given in \eqref{eq: def of the map eta}.

(2) Using \cref{eq: def of the map eta}, one can easily verify that the essential epimorphism $\eta: \overline{\bfP}_{\alpha^{\rmc}} \twoheadrightarrow \hbfS_{\alpha, C}$ in \cite[Theorem 5.3]{22CKNO} satisfies $$\eta = \tilde{\eta} \circ \Gamma \circ \Phi.$$
\end{remark}

In this subsection, we show that there is a surjective $H_n(0)$-module homomorphism 
$\Upsilon: \bfY_\alpha \ra \hbfS_{\alpha, C}$
such that   
the diagram 
\[
\begin{tikzpicture}
\def\hp{2.6} 
\def\vp{1.5}  
\node at (0, 0) {$\overline{\bfP}_{\alpha^{\rmc}}$};
\node at (1*\hp, 0) {$\calV_\alpha$};
\node at (2*\hp, 0) {$X_\alpha$};
\node at (3*\hp, 0) {$\hbfS_{\alpha, C}$};
\node at (2.45*\hp, -1*\vp) {$\bfY_\alpha$};

\draw[->>] (0.2*\hp, 0) --(0.8*\hp, 0);
\node[above] at (0.5*\hp, 0) {\scriptsize $\Phi$};
\draw[->>] (1.2*\hp, 0) --(1.8*\hp, 0);
\node[above] at (1.5*\hp, 0) {\scriptsize $\Gamma$};
\draw[->>] (2.2*\hp, 0) --(2.8*\hp, 0);
\node[above] at (2.5*\hp, 0) {\scriptsize $\tilde{\eta}$};

\draw[->>] (1.2*\hp, -0.2*\vp) -- (2.3*\hp, -1*\vp);
\node[left] at (1.65*\hp, -0.6*\vp) {\scriptsize $\delta$};

\draw[->>] (2.6*\hp, -1*\vp) -- (2.9*\hp, -0.2*\vp); 
\node[right] at (2.75*\hp, -0.6*\vp) {\scriptsize $\Upsilon$};

\node at (2.4*\hp, -0.5*\vp) {\scalebox{1.5}{$\circlearrowleft$}};
\end{tikzpicture}
\]
commutes.
In the case where \(\alpha\) is  a shuffle of a partition and \((1^k)\)
for some \(k \geq 0\), we also show that there exists
a surjective $H_n(0)$-module homomorphism  $\tilde{\delta}: X_\alpha \ra \bfY_\alpha$ such that the diagram 
\[
\begin{tikzpicture}
\def\hp{2.6} 
\def\vp{1.5}  
\node at (0, 0) {$\overline{\bfP}_{\alpha^{\rmc}}$};
\node at (1*\hp, 0) {$\calV_\alpha$};
\node at (2*\hp, 0) {$X_\alpha$};
\node at (3*\hp, 0) {$\hbfS_{\alpha, C}$};
\node at (2.45*\hp, -1*\vp) {$\bfY_\alpha$};

\draw[->>] (0.2*\hp, 0) --(0.8*\hp, 0);
\node[above] at (0.5*\hp, 0) {\scriptsize $\Phi$};
\draw[->>] (1.2*\hp, 0) --(1.8*\hp, 0);
\node[above] at (1.5*\hp, 0) {\scriptsize $\Gamma$};
\draw[->>] (2.2*\hp, 0) --(2.8*\hp, 0);
\node[above] at (2.5*\hp, 0) {\scriptsize $\tilde{\eta}$};

\draw[->>] (1.2*\hp, -0.2*\vp) -- (2.3*\hp, -1*\vp);
\node[left] at (1.65*\hp, -0.6*\vp) {\scriptsize $\delta$};

\draw[->>] (2.6*\hp, -1*\vp) -- (2.9*\hp, -0.2*\vp); 
\node[right] at (2.75*\hp, -0.6*\vp) {\scriptsize $\Upsilon$};

\node at (2.55*\hp, -0.5*\vp) {\scalebox{1.4}{$\circlearrowleft$}};
\node at (1.95*\hp, -0.5*\vp) {\scalebox{1.4}{$\circlearrowright$}};

\draw[->>] (2.1*\hp, -0.2*\vp) --(2.4*\hp, -0.8*\vp) ;
\node[left] at (2.28*\hp, -0.6*\vp) {\scriptsize $\tilde{\delta}$}; 
\end{tikzpicture}
\]
commutes.

\begin{proposition}\label{prop: existence of Gamma map}
For a composition $\alpha$ of $n$, there is a unique $H_n(0)$-module homomorphism $\Upsilon: \bfY_\alpha \ra \hbfS_{\alpha, C}$ such that $\Upsilon \circ \delta=\tilde{\eta} \circ \Gamma $.
\end{proposition}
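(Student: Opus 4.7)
The plan is to verify uniqueness and existence of $\Upsilon$ separately. Uniqueness is immediate: since $\delta: \calV_\alpha \twoheadrightarrow \bfY_\alpha$ is surjective, any $H_n(0)$-homomorphism out of $\bfY_\alpha$ agreeing with $\tilde{\eta} \circ \Gamma$ under composition with $\delta$ is determined on all of $\bfY_\alpha$. For existence, it suffices to verify that $\tilde{\eta} \circ \Gamma$ factors through $\delta$, equivalently that $\ker \delta \subseteq \ker(\tilde{\eta} \circ \Gamma)$. Pulling back through $\Theta_V$, the kernel of $\delta$ is spanned by those $\calT \in \SIT(\alpha)$ with $\sfrow(\calT) \notin \calK_\alpha$; by the alternative description of $\calK_\alpha$ in \cref{eq: simple descript for Ga}, this is precisely the condition $\sh(\hatPtab(\sfrow(\calT) w_0)) \neq \alpha$. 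On the other hand, from \cref{eq: def of the map Gamma} and \cref{eq: def of the map eta}, the composition $(\tilde{\eta} \circ \Gamma)(\calT)$ is non-zero precisely when $\calT$ lies in both $\SET(\alpha)$ and $\SYCT(\alpha; C)$. Hence the required inclusion reduces to the combinatorial key claim: \emph{if $\calT \in \SET(\alpha) \cap \SYCT(\alpha; C)$, then $\sh(\hatPtab(\sfrow(\calT) w_0)) = \alpha$.}

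To establish the key claim, I will prove the stronger assertion $\hatPtab(\sfrow(\calT) w_0) = \calT$, with $\calT$ on the right regarded as a Young composition tableau (legitimate because $\calT \in \SYCT(\alpha; C)$). A direct unravelling of the reading word shows that $\sfrow(\calT) w_0$ reads $\calT$ row by row from top to bottom, left to right within each row, so \cref{alg: modified mason algotithm} processes the rows of $\calT$ in the order $R_l, R_{l-1}, \ldots, R_1$, where $R_r$ denotes the $r$-th row of $\calT$ from the bottom and $l = \ell(\alpha)$. I proceed by downward induction on $j$: after the top $j$ rows $R_l, R_{l-1}, \ldots, R_{l-j+1}$ have been inserted, the intermediate Young composition tableau has shape $(\alpha_{l-j+1}, \ldots, \alpha_l)$ with its bottom-to-top row sequence equal to $R_{l-j+1}, R_{l-j+2}, \ldots, R_l$. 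In the inductive step, I insert the entries $\calT(l-j, 1), \calT(l-j, 2), \ldots, \calT(l-j, \alpha_{l-j})$ of $R_{l-j}$ in turn; for $c = 1$, the column-strict condition forces $\calT(l-j, 1)$ to be strictly smaller than every entry of the intermediate tableau, so \cref{alg: insertion algorithm} fires Case 3 throughout and creates a new bottom row containing $\calT(l-j,1)$. For $c \geq 2$, Case 3 should continue to fire at every cell visited before $(1, c-1)$, at which point Case 1 places $\calT(l-j, c)$ into the new cell $(1, c)$.

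The main technical obstacle is the case-by-case verification at cells $(r, c-1)$ with $r \geq 2$: one must rule out the possibility $\calT(l-j+r-1, c-1) \leq \calT(l-j, c)$, which would otherwise trigger Case 1 (when $\alpha_{l-j+r-1} = c-1$) or Case 2 (when $\alpha_{l-j+r-1} \geq c$) at a wrong cell. In the first sub-case, the contrapositive of the Young triple rule applied to $(l-j, c)$ and $(l-j+r-1, c-1)$ forces $\calT(l-j+r-1, c-1) > \calT(l-j, c)$; in the second sub-case, if $\calT(l-j+r-1, c-1) \le \calT(l-j, c)$, then the Young triple rule would give $\calT(l-j, c) > \calT(l-j+r-1, c)$, contradicting the column-strict inequality $\calT(l-j+r-1, c) > \calT(l-j, c)$. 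For cells in columns strictly greater than $c-1$, a direct comparison using the row- and column-strict properties shows that entries there exceed $\calT(l-j, c)$, so Cases 1 and 2 again fail. Once this casework is completed, the identity $\hatPtab(\sfrow(\calT) w_0) = \calT$ follows by induction, which in particular gives $\sh(\hatPtab(\sfrow(\calT) w_0)) = \alpha$ and thereby the proposition.
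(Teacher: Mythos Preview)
Your proof is correct and follows essentially the same approach as the paper: both reduce the kernel inclusion to the combinatorial statement that $\hatPtab(\sfrow(\calT)w_0) = \calT$ for $\calT \in \SYCT(\alpha; C)$, and both verify this by tracing the insertion algorithm step by step, using the Young triple rule together with column-strictness to rule out premature placement or bumping. The only cosmetic difference is the induction scheme---the paper inducts on $n = |\alpha|$, removing the single cell $(1,\alpha_1)$ at each step, whereas you organize the argument row by row with an inner loop over the column index $c$---but the key case analysis (column $\geq c$ handled by row- and column-strictness; column $c-1$ handled via the Young triple rule) is identical.
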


\begin{proof}
To prove the assertion, it suffices to show that 
$\ker(\tilde{\eta} \circ \Gamma) \supseteq \ker(\delta)$.
To this end, we first describe  $\ker(\tilde{\eta} \circ \Gamma)$ and $\ker(\delta)$ explicitly.
By \cref{eq: def of the map Gamma} and \cref{eq: def of the map eta}, $\ker(\tilde{\eta}\circ \Gamma)$ is given by 
\[
\ker(\tilde{\eta}\circ \Gamma) = \C\{\calT \in \SIT(\alpha) \mid \calT \notin \SYCT(\alpha; C) \}.
\]
On the other hand, by the definition of $\bfY_\alpha$, we have 
$\ker(\delta) = {\Theta_V}^{-1}(M_{l-1})$.
Combining this equality with \cref{eq: simple descript for Ga}, we obtain  
\[
\ker(\delta) =  \C\{\calT \in \SIT(\alpha) \mid \sh(\hatPtab(\sfrow(\calT)w_0)) \neq \alpha\}.
\]
To establish the desired inclusion, we will use mathematical induction on \( n = |\alpha| \). Specifically, we aim to show that if \(\tau \in \SYCT(\alpha; C)\), then \(\hatPtab(\sfrow(\tau)w_0) = \tau\). 
If $\ell(\alpha) = 1$, there is nothing to prove.
So, assume that $l:=\ell(\alpha) > 1$.
Let $\sigma := \sfrow(\tau)w_0$ and $\sigma' := \sigma[1:n-1]$.
Let $\tau'$ be the filling obtained from $\tau$ by removing the cell containing $\tau(1, \alpha_1)\,(= \sigma(n))$.
Then, $\tau'$ is a Young composition tableau filled with distinct entries and its shape is 
\[
\beta = \begin{cases}
   (\alpha_2, \ldots, \alpha_l) & \text{if $\alpha_1 = 1$,}\\
   (\alpha_1-1, \alpha_2, \ldots, \alpha_l) & \text{if $\alpha_1 > 1$.}
\end{cases}
\]
By the induction hypothesis, we have 
$\hatPtab(\sigma') = \tau'$.
Thus, it remains to show that 
\begin{align}\label{text: suff assertion}
\text{$\tau(1, \alpha_1)$ is placed in the cell $(1, \alpha_1)$ in $\hatPtab(\sigma)$.}
\end{align}
If $\alpha_1 = 1$, then \cref{text: suff assertion} follows immediately since $\tau(1,1) = 1$.
From now on, suppose that $\alpha_1 > 1$.
Then \cref{text: suff assertion} follows by showing that for \((i, j) \in \tcd(\beta)\),  
\[
\text{if \(i \geq 2\) and \(j \geq \alpha_1 - 1\), then \(\tau(i, j) > \tau(1, \alpha_1)\).}
\]
In the case where $j \geq \alpha_1$, since $\tau \in \SYCT(\alpha; C)$, it follows that $\tau(i, j) \geq \tau(i, \alpha_1) > \tau(1, \alpha_1)$.
Now, consider the case where \( j = \alpha_1 - 1 \). Suppose, for the sake of contradiction, that \( \tau(i, \alpha_1 - 1) < \tau(1, \alpha_1) \). 
By Young triple rule, this inequality implies that \( (i, \alpha_1) \in \tcd(\alpha) \) and \( \tau(i, \alpha_1) < \tau(1, \alpha_1) \). However, this contradicts the assumption that \( \tau \in \SYCT(\alpha; C) \).
\end{proof}

\begin{remark}
In \cite[Section 7]{15TW}, Tewari and van Willigenburg introduced the notion of \emph{simple compositions} to characterize when $\bfS_\alpha$ is indecomposable.
A composition \(\alpha\) is called \emph{simple} if, for any \(1 \leq i < j \leq \ell(\alpha)\) with \(\alpha_i \geq \alpha_j \geq 2\), there exists an integer \(k\) such that \(i < k < j\) and \(\alpha_k = \alpha_j - 1\). 
Since \(\hbfS_{\alpha} = \upphi[\bfS_{\alpha^\rmr}]\) by \cite[(4.2)]{22CKNO}, \cite[Theorem 7.6]{15TW} says that 
\(\hbfS_{\alpha}\) is indecomposable, equivalently, \(\hbfS_{\alpha} = \hbfS_{\alpha, C}\) if and only if \(\alpha^\rmr\) is simple. 
Combining this with the equality \(\ch([\bfY_\alpha]) = \hscrS_\alpha = \ch([\hbfS_\alpha])\), it follows from \cref{prop: existence of Gamma map} that \(\Upsilon: \bfY_\alpha \to \hbfS_{\alpha, C}\) is an isomorphism if and only if \(\alpha^\rmr\) is a simple composition. 
\end{remark}

Next, we introduce the second main result of this subsection.
\begin{proposition}\label{prop: exitence of tildel map}
If \(\alpha\) is a composition of \(n\) obtained by shuffling a partition and \((1^k)\) for some \(k \geq 0\), then there exists a unique \(H_n(0)\)-module homomorphism \(\tilde{\delta}: X_\alpha \to \bfY_\alpha\) such that \(\delta = \tilde{\delta} \circ \Gamma\).
\end{proposition}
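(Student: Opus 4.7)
The plan is to invoke the universal property of the quotient module: since $\Gamma:\calV_\alpha\twoheadrightarrow X_\alpha$ is surjective, an $H_n(0)$-module homomorphism $\tilde\delta$ satisfying $\delta=\tilde\delta\circ\Gamma$ exists and is unique precisely when $\ker(\Gamma)\subseteq\ker(\delta)$. From \eqref{eq: def of the map Gamma} we have
\[
\ker(\Gamma)=\C\{\calT\in\SIT(\alpha)\mid \calT\notin\SET(\alpha)\},
\]
and from the description of $\ker(\delta)$ obtained at the start of the proof of \cref{prop: existence of Gamma map},
\[
\ker(\delta)=\C\{\calT\in\SIT(\alpha)\mid \sh(\hatPtab(\sfrow(\calT)w_0))\ne\alpha\}.
\]
So the problem reduces to verifying the set-theoretic inclusion
\[
\{\calT\in\SIT(\alpha)\mid \sh(\hatPtab(\sfrow(\calT)w_0))=\alpha\}\subseteq\SET(\alpha).\quad(\ast)
\]

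To prove $(\ast)$, I would transport it to $\SG_n$ via $\sfrow$: by \eqref{eq: simple descript for Ga} the left-hand side corresponds to $\calK_\alpha$, while by \cref{interval module structures of V and X}(2), $\SET(\alpha)$ corresponds to $[\sfrow(\sfT_\alpha),\sfrow(\sfT'_\alpha)]_L$. Hence $(\ast)$ is equivalent to
\[
\calK_\alpha\cdot w_0\subseteq [\sfrow(\sfT_\alpha),\sfrow(\sfT'_\alpha)]_L\cdot w_0.
\]
The key observation is that, by its very definition, $\calK_\alpha\cdot w_0$ is a single equivalence class of $\simeq_M$ — namely the class of all permutations whose $\hatPtab$ has shape $\alpha$ and whose $\hatQtab$ equals $\calT_{\alpha^\rmr}$. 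On the other hand, under our hypothesis on $\alpha$, \cref{prop: SET is closed under simeqM} asserts that $[\sfrow(\sfT_\alpha),\sfrow(\sfT'_\alpha)]_L\cdot w_0$ is $\simeq_M$-closed. Therefore it suffices to exhibit one element of $\calK_\alpha\cdot w_0$ lying in this interval.

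The natural candidate is $\sfrow(\sfT_\alpha)w_0$, which trivially lies in $[\sfrow(\sfT_\alpha),\sfrow(\sfT'_\alpha)]_L\cdot w_0$. To see that it also lies in $\calK_\alpha\cdot w_0$, I need to check that $\hatPtab(\sfrow(\sfT_\alpha)w_0)=\sfT_\alpha$ (so it has shape $\alpha$) and $\hatQtab(\sfrow(\sfT_\alpha)w_0)=\calT_{\alpha^\rmr}$. The second identity has already been recorded in the proof of \cref{thm: dist filt for Xa}. For the first, one checks that $\sfT_\alpha\in\SYCT(\alpha;C)$: its columns strictly increase from bottom to top by construction, and the Young triple rule is vacuous because every entry in row $i$ is strictly smaller than every entry in row $j$ whenever $i<j$. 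Then the claim proved inside \cref{prop: existence of Gamma map} that $\hatPtab(\sfrow(\tau)w_0)=\tau$ for every $\tau\in\SYCT(\alpha;C)$ applies to $\tau=\sfT_\alpha$.

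The main obstacle is just this verification that $\sfrow(\sfT_\alpha)w_0\in\calK_\alpha\cdot w_0$; all the substantive combinatorial input has been done in advance, in \cref{prop: SET is closed under simeqM} (the $\simeq_M$-closedness of the interval, which is precisely where the shuffle hypothesis on $\alpha$ is used) and in the proof of \cref{prop: existence of Gamma map} (the fact that SYCTs in $C$ are fixed by the Mason insertion of their row word times $w_0$). With these ingredients assembled, $(\ast)$ follows immediately and yields the existence and uniqueness of $\tilde\delta$.
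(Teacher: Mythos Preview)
Your proposal is correct and follows essentially the same approach as the paper: both reduce to showing $\ker(\Gamma)\subseteq\ker(\delta)$, which amounts to $\calK_\alpha\subseteq\{\sfrow(T)\mid T\in\SET(\alpha)\}$, and both establish this by observing that $\calK_\alpha\cdot w_0$ is a single $\simeq_M$-class, that the interval $[\sfrow(\sfT_\alpha),\sfrow(\sfT'_\alpha)]_L\cdot w_0$ is $\simeq_M$-closed by \cref{prop: SET is closed under simeqM}, and that the two sets share the element $\sfrow(\sfT_\alpha)w_0=\sfrow(\calT_\alpha)w_0$. The paper cites the membership $\sfrow(\calT_\alpha)w_0\in\calK_\alpha\cdot w_0$ directly from the discussion at the start of \cref{subsec: indecomp mod for YSal and Sal}, whereas you re-derive it via $\sfT_\alpha\in\SYCT(\alpha;C)$ and the fixed-point claim inside \cref{prop: existence of Gamma map}; this is only a cosmetic difference.
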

\begin{proof}
As in the proof of \cref{prop: existence of Gamma map},
it suffices to show that $\ker(\delta) \supseteq \ker(\Gamma)$.
Note that 
\begin{align*}
&\ker(\delta) = \C \{\calT \in \SIT(\alpha) \mid \sfrow(\calT) \notin \calK_\alpha\} \text{ and }\\
&\ker(\Gamma) = \C \{\calT \in \SIT(\alpha) \mid \calT \notin \SET(\alpha)\}
\end{align*}
(for the definition of $\calK_\alpha$, see \eqref{eq: def of the map Gamma}).
Note that  $\calK_\alpha \cdot w_0$ is an equivalence class under $\simeq_M$ and it contains $\sfrow(\calT_\alpha) w_0$.
Combining this with \cref{prop: SET is closed under simeqM}, we have 
\begin{equation*}
    \calK_\alpha \subseteq \{\sfrow(T) \mid T \in \SET(\alpha) \}.
\end{equation*}
Therefore, if $\calT \in \SIT(\alpha) \setminus \SET(\alpha)$, then $\sfrow(\calT) \notin \ker(\delta)$, as required.
\end{proof}

\subsection{Weak Bruhat interval module structure for $\bfY_\alpha$}\label{subsec: Weak Bruhat interval module structure for Yal}
Unless otherwise specified, in this subsection, \(\alpha\) refers to a composition of \(n\) obtained by shuffling a partition and \((1^k)\) for some \(k \geq 0\). 
The purpose of this subsection is to prove that \(\calK_\alpha\) consists of the reading words of SE-decreasing standard extended tableaux and that \(\calK_\alpha\)  forms a left weak Bruhat interval.
Consequently, \(\bfY_\alpha\) is endowed with the structure of a weak Bruhat interval module.

\begin{definition}\label{def: SE-decreasing}
Let $\alpha$ be a composition of $n$.
We say that a filling $T$ of $\tcd(\alpha)$
is {\it southeast decreasing (simply, SE-decreasing)}  if  
for all \(1 \leq i < j \leq \ell(\alpha)\) with $\alpha_i > \alpha_j$, 
\begin{align*}
T(i, k+1) < T(j, k) \quad\text{for all $1 \leq k  \leq \alpha_j$.}
\end{align*}   
\end{definition}

\begin{lemma}\label{lem: property2 of top of Va}
Let \(\alpha\) be a composition of \(n\)
and \(\calT \in \SIT(\alpha)\). 
If \(\sh(\hatPtab(\sfrow(\calT)w_0)) = \alpha\), then 
$\calT$ is SE-decreasing.  
\end{lemma}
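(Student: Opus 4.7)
The plan is to prove the contrapositive: assuming $\calT \in \SIT(\alpha)$ fails to be SE-decreasing, I will show $\sh(\hatPtab(\sfrow(\calT)w_0)) \neq \alpha$. Fix a violation, namely indices $i < j$ with $\alpha_i > \alpha_j$ together with some $1 \leq k \leq \alpha_j$ such that $\calT(i,k+1) > \calT(j,k)$ (strict since the entries of $\calT$ are distinct). Writing $\sigma := \sfrow(\calT)w_0$ and $\lambda := \lambda(\alpha)$, the strategy is to exhibit, for a suitable $t$, a $t$-tuple $u \in \rmInc_{t}(\sigma)$ whose initial entries set $\sfIES(u)$ is strictly larger, with respect to $\leq_{\rmset}$, than what \cref{thm: an analoge of Greene} together with \cref{lem: first col of SIT} forces on $\sfmIES_{t}(\sigma)$ under the hypothesis $\sh(\hatPtab(\sigma)) = \alpha$.

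For the construction, set $m := \alpha_i$, $R_m := \{r : \alpha_r \geq m\}$, and $t_m := |R_m|$, and note $i \in R_m$ while $j \notin R_m$ (since $\alpha_j < m$). I take $u$ to be the $t_m$-tuple that uses the full row $r$ of $\calT$ for each $r \in R_m \setminus \{i\}$, together with the variant
\[
v \;=\; \calT(j,1)\,\calT(j,2)\,\cdots\,\calT(j,k)\,\calT(i,k+1)\,\cdots\,\calT(i,\alpha_i)
\]
in place of the row indexed by $i$. The total length of $u$ equals $\sum_{r\in R_m}\alpha_r = \lambda_1 + \cdots + \lambda_{t_m}$, which by \cref{thm: Greene k-dec} is precisely $\fraki_{t_m}(\sigma)$, so $u \in \rmInc_{t_m}(\sigma)$. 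The set $\sfIES(u)$ differs from $\{\calT(r,1) : r \in R_m\}$ only in that $\calT(i,1)$ is replaced by the strictly larger $\calT(j,1)$, and a direct lexicographic comparison yields $\sfIES(u) >_{\rmset} \{\calT(r,1) : r \in R_m\}$.

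To finish, I will invoke \cref{thm: an analoge of Greene} under the hypothesis $\sh(\hatPtab(\sigma)) = \alpha$: the bijection $k \mapsto i_k$ supplied there sends $R_m$ bijectively onto $[t_m]$, because $\alpha_k \geq m$ iff $\lambda_{i_k} \geq m$ iff $i_k \leq t_m$. Combined with \cref{lem: first col of SIT}(2), this forces $\sfmIES_{t_m}(\sigma) = \{\calT(k,1) : k \in R_m\}$, contradicting $\sfmIES_{t_m}(\sigma) \geq_{\rmset} \sfIES(u) >_{\rmset} \{\calT(k,1) : k \in R_m\}$. The most delicate point is verifying that $u$ indeed lies in $\rmInc_{t_m}(\sigma)$: pairwise disjointness of its components hinges on $j \notin R_m$ (so row $j$ is not independently used elsewhere in $u$); the variant $v$ is increasing precisely because of the chosen violation $\calT(i,k+1) > \calT(j,k)$; and the entries of $v$ appear in the correct order inside $\sigma$ because $\sfrow(\calT)w_0$ reads the rows of $\calT$ from top to bottom, so row $j$, which sits above row $i$, is traversed before row $i$.
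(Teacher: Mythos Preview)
Your proof is correct and follows essentially the same approach as the paper's: both argue by contradiction, fix a violating triple $(i,j,k)$, set $R = \{r : \alpha_r \geq \alpha_i\}$, build the $|R|$-tuple that replaces the $i$th row by the hybrid sequence $\calT(j,1)\cdots\calT(j,k)\,\calT(i,k+1)\cdots\calT(i,\alpha_i)$, and then use \cref{thm: an analoge of Greene} together with \cref{lem: first col of SIT} to compute $\sfmIES_{|R|}(\sigma)$ and derive the $\leq_{\rmset}$-contradiction. The paper's write-up is slightly terser (it leaves the bijection $k \mapsto i_k$ implicit and just asserts $\sfmIES_k(\sigma) = \{\calT(r,1) : r \in R\}$), but the mathematical content is identical; your more explicit unpacking of why $R_m$ corresponds to $[t_m]$ under the bijection of \cref{thm: an analoge of Greene} is a welcome clarification.
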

\begin{proof}
For simplicity, let \(\sigma := \sfrow(\calT)w_0\), \(\lambda := \lambda(\alpha)\), and \(l := \ell(\alpha)\). 
Assume, for the sake of contradiction, that there exist \(1 \leq i < j \leq l\) with \(\alpha_i > \alpha_j\) such that \(\calT(i, t+1) > \calT(j, t)\) for some \(1 \leq t \leq \alpha_j\).
Let
\[
R := \{1 \leq r \leq l \mid \alpha_r \geq \alpha_i\}, \quad k := |R|,
\]
and let \(u = (u^{(r)})_{r \in R}\) denote the \(k\)-tuple of increasing subsequences of \(\sigma\) defined as
\[
u^{(r)} := 
\begin{cases}
   \calT(j, 1)\,\calT(j, 2)\,\ldots\,\calT(j, t)\,\calT(i, t+1)\,\ldots\,\calT(i, \alpha_i) & \text{if \(r = i\),} \\
   \calT(r, 1)\,\calT(r, 2)\,\ldots\,\calT(r, \alpha_r) & \text{otherwise.}
\end{cases}
\]
Since $j \notin R$, the length of $\mathbf{u}$ is $\sum_{r \in R} \alpha_r$ $(= \sum_{1 \leq r \leq k} \lambda_r)$.
Combining this with  $\sh(\Ptab(\sigma)) = \lambda$, we derive
\(u \in \rmInc_k(\sigma)\) from
\cref{thm: Greene k-dec}.
On the other hand, by  
\cref{thm: an analoge of Greene} and  \cref{lem: first col of SIT}, the assumption  $\sh(\hatPtab(\sigma))  = \alpha$ implies that
\[
\sfmIES_k(\sigma) = \{\calT(r, 1) \mid r \in R\}.
\]
Since \(\calT(i, 1) < \calT(j, 1)\) with $i\in R$ and $j\notin R$, it follows that 
\(\sfmIES_k(\sigma) <_{\rmset} \sfIES(u)\), 
leading to a contradiction.
\end{proof}

Let 
$$\nSYCT(\alpha):=\{T\in \SET(\alpha)\mid \text{$T$ is SE-decreasing}\}.$$
The following theorem presents the first main result of this subsection.

\begin{theorem}\label{prop: description for Gamma wrt nSYCT}
If \(\alpha\) is a composition of \(n\) obtained by shuffling a partition and \((1^k)\) for some \(k \geq 0\),
then  
\[
\calK_\alpha = \{\sfrow(T) \mid T \in \nSYCT(\alpha) \}.
\]
\end{theorem}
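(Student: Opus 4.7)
The plan is to prove both inclusions. For the forward inclusion $\calK_\alpha \subseteq \{\sfrow(T) : T \in \nSYCT(\alpha)\}$, I would combine the description in \eqref{eq: simple descript for Ga} with \cref{lem: property2 of top of Va}, which forces any $\calT \in \SIT(\alpha)$ satisfying $\sh(\hatPtab(\sfrow(\calT)w_0)) = \alpha$ to be SE-decreasing, and with the argument in the proof of \cref{prop: exitence of tildel map}, which shows $\calK_\alpha \subseteq \{\sfrow(T) : T \in \SET(\alpha)\}$ via the $\simeq_M$-invariance of $I \cdot w_0$ from \cref{prop: SET is closed under simeqM}. Together these give the forward inclusion immediately.

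The substantive work lies in the reverse inclusion: for each $T \in \nSYCT(\alpha)$, setting $\sigma := \sfrow(T)w_0$, I must establish $\sh(\hatPtab(\sigma)) = \alpha$ together with $\hatQtab(\sigma) = \calT_{\alpha^\rmr}$. The latter is automatic from \eqref{eq: same Qtab in Xal} since $\sfrow(T) \in I$. For the former, my plan is to invoke \cref{thm: an analoge of Greene}(2), thereby reducing the task to an explicit description of $\sfmIES_k(\sigma)$ for each $1 \leq k \leq l := \ell(\alpha)$. Define the total order $\succ$ on $[l]$ by $r \succ r'$ if and only if $\alpha_r > \alpha_{r'}$, or $\alpha_r = \alpha_{r'}$ and $r > r'$; let $R_k$ denote the $\succ$-largest $k$-subset of $[l]$ and $i_r$ the $\succ$-rank of $r$. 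The key claim to verify is
\[
\sfmIES_k(\sigma) = \{T(r, 1) : r \in R_k\} \quad \text{for all } 1 \leq k \leq l.
\]

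The lower bound $\geq_{\rmset}$ is constructive: the $k$-tuple $u_k$ whose components are the rows of $T$ indexed by $R_k$ (each being an increasing subsequence of $\sigma$) has total length $\sum_{r \in R_k} \alpha_r = \lambda_1 + \cdots + \lambda_k = \fraki_k(\sigma)$ by \cref{thm: Greene k-dec} and the definition of $R_k$, hence $u_k \in \rmInc_k(\sigma)$ with $\sfIES(u_k) = \{T(r, 1) : r \in R_k\}$. For the upper bound, given $u \in \rmInc_k(\sigma)$ maximizing $\sfIES$, I plan to use an exchange argument: any component of $u$ traversing partial segments of several rows of $T$ can be rearranged into a single-row traversal without decreasing $\sfIES$ or the total length, thanks to the SE-decreasing inequalities $T(i, k+1) < T(j, k)$ and the shuffle structure of $\alpha$. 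The resulting row-taking subsequence is indexed by a $k$-subset $R'$ with $\{\alpha_r : r \in R'\} = \{\lambda_1, \ldots, \lambda_k\}$ as multisets, and the $\succ$-maximality of $R_k$ among such subsets then yields $\sfIES(u) \leq_{\rmset} \{T(r, 1) : r \in R_k\}$.

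Once the formula for $\sfmIES_k(\sigma)$ is established, \cref{thm: an analoge of Greene}(2) completes the argument: the smallest $t$ with $T(r, 1) \in \sfmIES_t(\sigma)$ is exactly $i_r$, which satisfies $\lambda_{i_r} = \alpha_r$ by construction of $\succ$, so $\sh(\hatPtab(\sigma))_r = \alpha_r$ and hence $\sh(\hatPtab(\sigma)) = \alpha$. The main obstacle is the exchange step in the upper bound: formalizing the reorganization of an arbitrary optimal $k$-increasing subsequence of $\sigma$ into a row-taking one requires careful combinatorial bookkeeping of how the SE-decreasing inequalities interact with the interleaved $1$'s in the shuffle structure of $\alpha$, and this is precisely where the shuffle hypothesis enters crucially.
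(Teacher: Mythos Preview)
Your forward inclusion matches the paper exactly. For the reverse inclusion, your route diverges from the paper's, and the exchange argument you flag as the main obstacle is genuinely the hard part of your plan; the paper circumvents it entirely.

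Rather than computing $\sfmIES_k(\sigma)$ exactly, the paper proves only the inclusion
\[
\sfmIES_k(\sigma) \subseteq \{T(i,1) \mid \alpha_i \geq \lambda_k\},
\]
which, together with \cref{lem: first col of SIT} and \cref{thm: an analoge of Greene}, already forces $\sh(\hatPtab(\sigma)) = \alpha$. The argument rests on one structural observation you do not mention: every increasing subsequence of $\sigma$ contains at most one entry from each column of $T$ (since columns of $T$ increase while the rows of $T$ appear top-down in $\sigma$). In the case $\lambda_1 = \lambda_k$, each component $u^{(i)}$ therefore has length exactly $\lambda_1$ and visits columns $1,2,\ldots,\lambda_1$ in order, so one may write $u^{(i)} = T(i_1,1)\,T(i_2,2)\cdots T(i_{\lambda_1},\lambda_1)$ with $i_1 \geq i_2 \geq \cdots$. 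If $\alpha_{i_1} < \lambda_1$, then at the first index $t$ with $\alpha_{i_t} = \lambda_1$ one has $i_{t-1} > i_t$ and $\alpha_{i_{t-1}} < \alpha_{i_t}$, and a single application of SE-decreasing gives $T(i_{t-1},t-1) > T(i_t,t)$, contradicting increasingness. The general case $\lambda_1 > \lambda_k$ reduces to this one by deleting all entries in columns beyond $\lambda_k$ of the long rows and checking that the truncated tableau $T'$ remains SE-decreasing and that $u$ restricts to an element of $\rmInc_k(\sigma')$.

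Your exchange argument would, if completed, yield a strictly stronger statement (the explicit formula for $\sfmIES_k(\sigma)$), but the rearrangement of an arbitrary optimal $u$ into a row-taking one with non-decreasing $\sfIES$ is not self-evident and would require nontrivial bookkeeping. The paper's per-component column analysis is shorter and uses SE-decreasing only once, at a single transition, rather than needing a global reorganization of $u$.
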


\begin{proof}
By \cref{prop: SET is closed under simeqM} and \cref{lem: property2 of top of Va}, 
we have $\calK_\alpha \subseteq  \{\sfrow(T) \mid T \in \nSYCT(\alpha) \}$.
Thus, it remains to prove that $\calK_\alpha \supseteq  \{\sfrow(T) \mid T \in \nSYCT(\alpha) \}$.
By \cref{eq: simple descript for Ga}, this reduces to showing that $\sh(\hatPtab(\sfrow(T) w_0) = \alpha$ for all $T \in \nSYCT(\alpha)$.

Let \(T \in \nSYCT(\alpha)\). 
For simplicity, let 
$\sigma := \sfrow(T)w_0$,
\(\lambda := \lambda(\alpha)\), and \(l := \ell(\lambda)\).
Note that every increasing subsequence of $\sigma$ contains at most one  element in each column of $T$.
More precisely, every increasing subsequence of length \(k\) can be expressed as 
\begin{align}\label{eq: observation on inc subseq}
T(i_1, j_1) T(i_2, j_2) \ldots T(i_k, j_k),
\end{align}
where \(l \geq i_1 \geq i_2 \geq \cdots \geq i_k \geq 1\) and \(1 \leq j_1 < j_2 < \cdots < j_k \leq l\).  
From this, it follows that 
\begin{align*}
    \fraki_k(\sigma) = \lambda_1 + \lambda_2 + \cdots + \lambda_k
\end{align*}
for every \(1 \leq k \leq l\).  
Therefore, by \cref{thm: Greene k-dec}, \(\Ptab(\sigma)\) has shape \(\lambda\).  
Combining this with \cref{lem: first col of SIT} and  \cref{thm: an analoge of Greene}, we deduce that 
\(\sh(\hatPtab(\sigma)) = \alpha\) if and only if  
\begin{equation}\label{eq: inclusion for IES k}
\sfmIES_k(\sigma) \subseteq \{T(i, 1) \mid 1 \leq i \leq l \text{ and } \alpha_i \geq \lambda_k\} \; \text{ for all \(1 \leq k \leq l\)}.
\end{equation}
Let us fix $1 \leq k \leq l$ and $u = (u^{(i)})_{1 \leq i \leq k} \in \rmInc_k(\sigma)$.
To prove \eqref{eq: inclusion for IES k}, we have only to see that 
\[
\sfIES(u) \subseteq \{T(i,1) \mid 1 \leq i \leq l \text{ with $\alpha_i \geq \lambda_k$} \}.
\]

{\it Case 1:} $\lambda_1 = \lambda_k$. 
Let $1 \leq i \leq k$.
Since $\fraki_1(\sigma) = \lambda_1$, the length of $u^{(i)}$ is $\lambda_1$. 
By \cref{eq: observation on inc subseq}, we can write $u^{(i)}$ as 
\[
u^{(i)} = T(i_1, 1) T(i_2, 2) \ldots T(i_{\lambda_1}, \lambda_1),
\]
where $l \geq i_1 \geq i_2 \geq \cdots \geq i_{\lambda_1} \geq 1$.
It suffices to show that $\alpha_{i_1} = \lambda_1$.
Suppose, for the sake of contradiction, that \(\alpha_{i_1} \neq \lambda_1\), i.e., \(\alpha_{i_1} < \lambda_1\). 
Since  \(\alpha_{i_{\lambda_1}} = \lambda_1\),   we can choose the smallest \(1 \leq t \leq \lambda_1\) such that \(\alpha_{i_t} = \lambda_1\).
Then, $1 < t \leq l$.
By the choice of $t$, $i_{t- 1} > i_{t}$ and $\alpha_{i_{t - 1}} < \alpha_{i_{t}} = \lambda_1$.
Since $T$ is SE-decreasing, this implies that $T(i_{t - 1}, t - 1) > T(i_{t}, t)$, 
which contradicts that $u^{(i)}$ is an increasing subsequence.

{\it Case 2:} $\lambda_1 > \lambda_k$.
Let 
$$X := \{T(i, j) \mid 1 \leq i \leq  l \text{ with $\alpha_i > \lambda_k$} \text{ and } \lambda_k < j \leq \alpha_i  \}.$$ 
Let $T'$ be the subfilling obtained from $T$ by removing all elements in $X$ and $\sigma'$ the word obtained from $\sigma$ by removing all elements in $X$. 
Note that $T'$ is an extended tableau of shape $\beta = (\beta_1, \ldots, \beta_l)$, where 
$$\beta_i = \begin{cases} \lambda_k & \text{ if } \alpha_i > \lambda_k,\\  \alpha_i & \text{ otherwise}\end{cases}$$
(for the definition of  extended tableaux, see \cite[Definition 6.16]{19AS}).
Define $u' = (u'^{(i)})_{1 \leq i \leq k}$ as the $k$-tuple of increasing 
subsequences of $\sigma'$, where each $u'^{(i)}$ is obtained from $u^{(i)}$ by removing all entries in $X$.
In view of \cref{eq: observation on inc subseq} and \cref{thm: Greene k-dec}, we have $\sfIES(u) \cap X = \emptyset$, and therefore $\sfIES(u') = \sfIES(u)$.
Thus, to prove the assertion, it suffices to show that {\it Case 1} can be applied to \(\sigma'\) and \(u'\).
Observe that 
\begin{itemize}
\item $\beta$ is a composition that is a  shuffle of a partition and $(1^{k'})$ for some $k' \geq 0$,
\item $T'$ is SE-decreasing, and 
\item \(\sigma'\) is the word obtained by reversing the entries of \(\sfrow(T')\).
\end{itemize}
Therefore, we have only to show that $u' \in \rmInc_k(\sigma')$.
By the construction, it immediately follows that $\mathbf{u'}$ is a $k$-increasing subsequence of $\sigma'$ and 
$$\ell(\mathbf{u'}) \geq \lambda_1 + \lambda_2 + \cdots +\lambda_k -|X| = k \lambda_k.$$
Also, since the shape of $\Ptab(\sigma')$ is 
$(\underbrace{\lambda_k, \ldots, \lambda_k}_{k\text{\;times}}, \lambda_{k+1}, \ldots, \lambda_l)$, \cref{thm: Greene k-dec} says that $\ell(\mathbf{u'}) \leq k \lambda_k$, as required.
\end{proof}

Next, we show that $\calK_\alpha$ forms a left weak Bruhat interval. 
We begin by introducing  sequences of cells in \(\tcd(\alpha)\). 
These sequences are constructed using the following procedure.

\begin{procedure}\label{proc: algo for defining sink}
Let $\alpha = (\alpha_1, \ldots, \alpha_l)$ be a composition of $n$ that is a shuffle of a partition and $(1^k)$ for some $k\ge 0$. 
Let \(i\) be the smallest index \(1 \leq t \leq l\) such that \(\alpha_t\) is the largest part of \(\alpha\).
Set the sequences $c^{(1)}$ and $c^{(2)}$ as \(c^{(1)} = ((i, \alpha_i))\) and \(c^{(2)} = \emptyset\).

\begin{enumerate}
\item[\textbf{P1.}] 
If \(\alpha_i \geq 3\), proceed to \textbf{P2}.  
Otherwise, proceed to \textbf{P3}.

\item[\textbf{P2.}]
If $\alpha_j < \alpha_i - 1$ for all $i < j \leq l$, terminate the procedure.
Otherwise, choose the smallest  \(i < j \leq l\) such that  
\(\alpha_j \geq \alpha_i - 1\); in this case, $\alpha_t = 1$ for all $i < t < j$.
Update \(c^{(1)}\) to be the concatenation of $((j, \alpha_j))$ and  $c^{(1)}$, and
update $i$ to $j$.
Then, proceed to \textbf{P1}.

\item[\textbf{P3.}]
Choose the largest \(i \leq j \leq l\) such that \(\alpha_t = \alpha_i\) for all \(i \leq t \leq j\).  
Update \(c^{(1)}\) to be the concatenation of
$((j, \alpha_j), \ldots,(i+2, \alpha_{i+2}), (i+1, \alpha_{i+1}))$ and  \(c^{(1)}\).
Let $\beta := (\alpha_{j+1}, \ldots, \alpha_{l})$.
\begin{enumerate}[]
    \item[\textbf{P3a.}] If $\beta = \varnothing$,  terminate the procedure.
    Otherwise, proceed to $\textbf{P3b}$.
    \item[\textbf{P3b.}]
Choose the smallest \(1 \leq t \leq \ell(\beta)\) such that \((\beta_t, \ldots, \beta_{\ell(\beta)})\) forms a partition.  
Update \(c^{(2)}\) to be  the concatenation of $c^{(2)}$ and $((\ell(\beta), \beta_{\ell(\beta)}), \ldots, (t, \beta_t))$
and update \(\beta\) to be  $(\beta_1, \ldots, \beta_{t-1}, \beta_t - 1, \ldots, \beta_{\ell(\beta)} - 1)$.  
Then, proceed to \(\textbf{P3a}\).
\end{enumerate}
\end{enumerate}
\end{procedure}
We denote the two resulting sequences \(c^{(1)}\) and \(c^{(2)}\) obtained by applying \cref{proc: algo for defining sink} to \(\alpha\) as \(\sfseq_1(\alpha)\) and \(\sfseq_2(\alpha)\), respectively.
For instance, if $\alpha = (1, 4, 1,4,3,2,2,2, 1,2,1,1,2,1)$, we have
\begin{align*}
    & \sfseq_1(\alpha) = ((8,2), (7,2), (6,2), (5, 3), (4,4), (2,4)) \quad \text{and}\\
    & \sfseq_2(\alpha) = ((14,1), (13,2), (13,1),(12, 1), (11, 1), (10, 2),(10, 1), (9,1)).
\end{align*}
See \cref{fig: example for seq_1 and seq_2}, where the cells in \( \sfseq_1(\alpha) \) are highlighted in red and the cells in \( \sfseq_2(\alpha) \) are highlighted in blue.
\begin{figure}
$\begin{array}{c}
\scalebox{0.5}{
\begin{ytableau}
*(blue!25)  \\
*(blue!25) & *(blue!25)  \\
*(blue!25)  \\
*(blue!25)  \\
*(blue!25) & *(blue!25) \\
*(blue!25) \\
{\color{white}1} & *(red!25) \\
{\color{white}1} & *(red!25) \\
{\color{white}1} & *(red!25) \\
{\color{white}1} & {\color{white}1} & *(red!25) \\
{\color{white}1} & {\color{white}1} & {\color{white}1} & *(red!25)\\
    {\color{white}1}  \\
    {\color{white}1} & {\color{white}1} & {\color{white}1} & *(red!25)\\
    {\color{white}1}
\end{ytableau}}
\end{array}$
\caption{}
\label{fig: example for seq_1 and seq_2}
\end{figure}

Let $r_0$ be the positive integer such that $(r_0, \alpha_{r_0})$ is the initial cell of the sequence $\sfseq_1(\alpha)$.
By the 
assumption on $\alpha$ and
the construction of \(\sfseq_1(\alpha)\) and \(\sfseq_2(\alpha)\), it follows that for \((i, j) \in \tcd(\alpha)\),
\begin{flalign}
    &\text{\((i, j) \in \sfseq_1(\alpha)\) if and only if \(1 \leq i \leq r_0\) with
     \(\alpha_i \geq \alpha_{r_0}\)  and \(j = \alpha_i\),}\label{eq: simple obs on seq one}\\
    &\text{\((i, j) \in \sfseq_2(\alpha)\) if and only if 
 $\alpha_{r_0} = 2$ and \(i > r_0\).}\label{eq: simple obs on seq two}
\end{flalign}
Now, define \(D_\alpha\) as the diagram obtained from \(\tcd(\alpha)\) by removing all cells in the sequences \(\sfseq_1(\alpha)\) and \(\sfseq_2(\alpha)\).

\begin{lemma}\label{lem: removing seq from original comp}
If \(\alpha\) is a composition of \(n\) obtained by shuffling a partition and \((1^k)\) for some \(k \geq 0\), then
$D_\alpha$ is a composition diagram.
Moreover, letting $\tilal :=\sh(D_\alpha)$, 
we have 
\begin{enumerate}[label = {\rm (\arabic*)}]
    \item $\tilal$ is a shuffle of a partition and $(1^{k'})$ for some $k' \ge 0$, and 
    \item 
    for all $1 \leq i <j  \leq \ell(\tilal)$, if $\alpha_i > \alpha_j$, then $\tilal_i > \tilal_j$.
\end{enumerate}
\end{lemma}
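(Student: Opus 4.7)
The plan is to use the explicit characterizations \eqref{eq: simple obs on seq one} and \eqref{eq: simple obs on seq two} to describe $D_\alpha$ cell by cell, then verify the three assertions by a case analysis on $c := \alpha_{r_0}$. The degenerate case $\alpha = (1^n)$ forces $c = 1$ and $D_\alpha = \varnothing$, for which the lemma holds trivially, so assume $c \geq 2$. Under this hypothesis, $\sfseq_1(\alpha)$ removes exactly the rightmost cell of each row $i \leq r_0$ with $\alpha_i \geq c$ (leaving $\alpha_i - 1 \geq 1$ cells), the rows $i \leq r_0$ with $\alpha_i < c$ are untouched, and $\sfseq_2(\alpha)$ either deletes every cell in rows $i > r_0$ (if $c = 2$) or does nothing (if $c \geq 3$). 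Consequently the non-empty rows of $D_\alpha$ form the contiguous initial segment $[1, r_0]$ when $c = 2$ and $[1, \ell(\alpha)]$ when $c \geq 3$, and each has at least one cell; hence $D_\alpha$ is a composition diagram.

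For part (1), write $\alpha$ as a shuffle of a partition $\lambda = (\lambda_1, \ldots, \lambda_m)$ (with all $\lambda_s \geq 2$) and $(1^k)$, and let $i_1 < \cdots < i_m$ be the positions of the non-$1$ parts. A direct trace of \cref{proc: algo for defining sink} produces an index $q$ with $r_0 = i_q$ and $\lambda_q = c$ such that $\lambda_s - \lambda_{s+1} \leq 1$ for every $s < q$, and moreover $\lambda_{q+1} \leq c - 2$ when $c \geq 3$ (since P2 terminated at $i_q$). Reading the non-$1$ parts of $\tilal$ in row order then yields $\lambda_1 - 1, \ldots, \lambda_q - 1, \lambda_{q+1}, \ldots, \lambda_m$ when $c \geq 3$, or $\lambda_1 - 1, \ldots, \lambda_{q'} - 1$ with $q' := \max\{s \leq q : \lambda_s \geq 3\}$ when $c = 2$. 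In either case the sequence is weakly decreasing with each part at least $2$ (using $\lambda_q - 1 \geq \lambda_{q+1}$ in the first case), hence a partition; the remaining rows of $\tilal$ are $1$'s, so $\tilal$ is the required shuffle.

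For part (2), fix $1 \leq i < j \leq \ell(\tilal)$ with $\alpha_i > \alpha_j$, and classify each of rows $i, j$ as reduced (with rightmost cell removed) or unaltered in $D_\alpha$. Three of the four configurations immediately give $\tilal_i - \tilal_j \geq \alpha_i - \alpha_j > 0$. In the remaining configuration (row $i$ reduced, row $j$ unaltered), if $j > r_0$ then $c \geq 3$ and $\alpha_j \leq c - 2 \leq \alpha_i - 2$, giving $\tilal_i - \tilal_j \geq 1$; if $j \leq r_0$ and $\alpha_j < c$, the structural facts from the previous paragraph force $\alpha_j = 1$, and for $c \geq 3$ one has $\tilal_i \geq c - 1 \geq 2 > 1 = \tilal_j$. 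The final subcase is $c = 2$ with $\alpha_i = 2$ and $\alpha_j = 1$; this is ruled out because $r_0$ is the end of a maximal consecutive run of $2$'s in $\alpha$ that P3 entered at its first $2$, and the partition structure of $\lambda$ prevents any earlier $2$ from sitting in $[1, r_0]$ separated from $r_0$ by $1$-rows. The main obstacle is precisely this last observation: one has to read \cref{proc: algo for defining sink} carefully enough to pin down where $r_0$ lies relative to the $2$'s of $\alpha$, which is not immediate from the procedure but becomes transparent once \eqref{eq: simple obs on seq one} and \eqref{eq: simple obs on seq two} are invoked.
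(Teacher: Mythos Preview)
Your proof is correct and follows essentially the same approach as the paper: both perform a case analysis on $c = \alpha_{r_0}$, use the characterizations \eqref{eq: simple obs on seq one} and \eqref{eq: simple obs on seq two} to write down $\tilal$ explicitly, and then verify (1) and (2) from the shuffle structure of $\alpha$. Your treatment is in fact more detailed than the paper's---you spell out the non-$1$ parts of $\tilal$ and carry out an explicit four-configuration analysis for (2), whereas the paper simply records the key structural observations (e.g.\ that $\{i \leq r_0 : \alpha_i = 2\} = [t, r_0]$ in Case~2, and $\alpha_i < \alpha_{r_0} - 1$ for $i > r_0$ in Case~3) and leaves the routine verification to the reader.
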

\begin{proof}
Let $l := \ell(\alpha)$ and $(r_0, \alpha_{r_0})$ be the initial cell of the sequence $\sfseq_1(\alpha)$.
For each $1 \leq i \leq l$, let $\beta_i$ be the number of cells in the $i$th row of $D_\alpha$. Then,  
\[
\beta_i = 
\begin{cases}
\alpha_i - 1 & \text{if $(i, \alpha_i) \in \sfseq_1(\alpha)$,}\\
0  & \text{if $(i, \alpha_i) \in \sfseq_2(\alpha)$,}\\
\alpha_i & \text{otherwise.}
\end{cases}
\]
We prove our assertion by considering the following three cases for \(\alpha_{r_0}\).

{\it Case 1:  $\alpha_{r_0} = 1$.}
In this case,  by  the construction of $\sfseq_1(\alpha)$, the largest part of $\alpha$ is \(1\).  
Therefore, \(\alpha = (1^n)\) and \(r_0 = n\).  
As a result, \(D_\alpha\) is the empty diagram, implying that \(\tilal = \varnothing\).  
Consequently, \(\tilal\) trivially satisfies conditions (1) and (2).  

{\it Case 2:  $\alpha_{r_0} = 2$.}
Combining  \cref{eq: simple obs on seq one} and \cref{eq: simple obs on seq two} yields that for $1 \leq i \leq l$, 
\[
\beta_i = \begin{cases}
    \alpha_i - 1 & \text{if $1 \leq i \leq r_0$ and $\alpha_i \geq 2$,}\\
    1 & \text{if $1 \leq i \leq r_0$ and $\alpha_i =1$,}\\
    0 & \text{if $r_0 < i \leq l$}.
\end{cases}
\]
This shows that $D_\alpha$ is a composition diagram and $\tilal = (\beta_1, \beta_2, \ldots, \beta_{r_0})$. 
Furthermore,  $\tilal$ satisfies conditions (1) and (2), as can be deduced from the assumption on $\alpha$ and the observation that $\{1 \leq i \leq r_0 \mid \alpha_i = 2 \} = [t, r_0]$ for some $1 \leq t \leq r_0$.

{\it Case 3: $\alpha_{r_0} \geq 3$.} 
Combining  \cref{eq: simple obs on seq one} and \cref{eq: simple obs on seq two} yields that for $1 \leq i \leq l$, 
\[
\beta_i = \begin{cases}
    \alpha_i - 1 & \text{if $1 \leq i \leq r_0$ and $\alpha_i \geq 2$,}\\
    1 & \text{if $1 \leq i \leq r_0$ and $\alpha_i =1$,}\\
    \alpha_i & \text{if $r_0 < i \leq l$}.
\end{cases}
\]
This shows that \( D_\alpha \) is a composition diagram and that \(\tilal = (\beta_1, \beta_2, \ldots, \beta_l)\). 
Furthermore, \(\tilal\) satisfies conditions (1) and (2), as can be deduced from the assumption on $\alpha$ and the observation that $\alpha_i < \alpha_{r_0} - 1$ for all $r_0 < i \leq l$.
\end{proof}

Recall that the shape of \(D_\alpha\) is denoted by \(\tilal\) (see \cref{lem: removing seq from original comp}).
We have $\tcd(\alpha) = \tcd(\tilal) \sqcup \{C \mid C \in \sfseq_i(\alpha) \; \text{for some $i = 1, 2$} \}$.
Now, define the filling $\tau'_\alpha$ of $\tcd(\alpha)$ by
\[
\tau'_\alpha(C) = \begin{cases}
  \tau'_{\tilal}(C)  & \text{if $C \in \tcd(\tilal)$,}\\
  n+1-k& \text{if $C$ is the $k$th entry of $\sfseq(\alpha)$},
\end{cases}
\]
where $\sfseq(\alpha)$ is the sequence obtained by concatenating $\sfseq_2(\alpha)$ and $\sfseq_1(\alpha)$.

\begin{lemma}\label{lem: tau'al in nSYCT}
If \(\alpha\) is a composition of \(n\) obtained by shuffling a partition and \((1^k)\) for some \(k \geq 0\), then $\tau'_\alpha \in \nSYCT(\alpha)$.
\end{lemma}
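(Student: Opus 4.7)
The proof would proceed by induction on $n = |\alpha|$, with $n = 0$ being vacuous. By \cref{lem: removing seq from original comp}(1), $\tilal$ is again a shuffle of a partition and $(1^{k'})$, so by the inductive hypothesis $\tau'_{\tilal} \in \nSYCT(\tilal)$. The recursive construction of $\tau'_\alpha$ assigns the values $n, n-1, \ldots, |\tilal|+1$ to the cells of $\sfseq_2(\alpha) \cdot \sfseq_1(\alpha)$ in that order and leaves $\tau'_{\tilal}$ untouched on $\tcd(\tilal)$.

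I would first verify that $\tau'_\alpha \in \SET(\alpha)$. The entries form a permutation of $[n]$ from the disjoint decomposition $\tcd(\alpha) = \tcd(\tilal) \sqcup \sfseq_1(\alpha) \sqcup \sfseq_2(\alpha)$ and the disjoint value ranges. Row-increase holds because (i) it holds inside $\tcd(\tilal)$ by induction, (ii) each cell of $\sfseq_1(\alpha)$ is appended to the right end of a $\tilal$-row and receives a value $\geq |\tilal|+1$ which exceeds every earlier entry, and (iii) rows lying entirely inside $\sfseq_2(\alpha)$ (only possible in Case 2) are visited by the P3b loop in strictly decreasing order of column. For column-increase, \cref{eq: simple obs on seq one} and \cref{eq: simple obs on seq two} together with the shuffle hypothesis force in any fixed column the $\tcd(\tilal)$-rows to lie strictly below the $\sfseq_1$-rows, which lie strictly below the $\sfseq_2$-rows; within each group, the procedural order yields bottom-to-top monotonicity.

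The SE-decreasing condition is then checked by case analysis on where $(i, k+1)$ and $(j, k)$ sit, given $i < j$ with $\alpha_i > \alpha_j$. If both cells lie in $\tcd(\tilal)$, \cref{lem: removing seq from original comp}(2) upgrades $\alpha_i > \alpha_j$ to $\tilal_i > \tilal_j$ and induction applies. If $(i, k+1) \in \tcd(\tilal)$ and $(j, k) \in \sfseq(\alpha)$, the inequality is immediate from the value ranges. If both lie in $\sfseq_1(\alpha)$, writing them as $(i_p, \alpha_{i_p})$ and $(i_q, \alpha_{i_q})$ in the order of the $\sfseq_1$-procedure gives $p < q$ and hence the value inequality $|\tilal|+1+p < |\tilal|+1+q$. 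The reverse mixed case with $(i, k+1) \in \sfseq(\alpha)$ and $(j, k) \in \tcd(\tilal)$ would be eliminated by the structural facts that in Case 3 ($\alpha_{r_0} \geq 3$) every $t > r_0$ satisfies $\alpha_t = 1$ and every $t \leq r_0$ with $\alpha_t \geq 2$ satisfies $\alpha_t \geq \alpha_{r_0}$, and in Case 2 the rows $t \leq r_0$ with $\alpha_t = 2$ form a contiguous suffix of $[1, r_0]$.

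The main obstacle is the final subcase where both $(i, k+1)$ and $(j, k)$ lie in $\sfseq_2(\alpha)$; this only occurs in Case 2 and forces $\alpha_i = 2$, $\alpha_j = 1$, $k = 1$. Here one must show that $(j, 1)$ strictly precedes $(i, 2)$ in $\sfseq_2(\alpha)$. A careful analysis of the P3b loop would establish this: within a single iteration the cells are appended in the order $(\ell(\beta), \beta_{\ell(\beta)}), \ldots, (t, \beta_t)$, so global rows strictly decrease while columns weakly increase along the partition suffix of $\beta$, placing any 1-cell in a row $j > i$ before the 2-cell in row $i$; across iterations, the 2-cell at row $i$ stays blocked until every higher-indexed 2-entry of the current $\beta$ has been decremented, which forces the intervening 1-cells above $i$ to be emitted in earlier iterations. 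A bookkeeping of the iteration indices and of the partition conditions then completes the proof.
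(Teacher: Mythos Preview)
Your approach is essentially the paper's: induction on $n$ via the recursion $\tau'_\alpha|_{\tcd(\tilal)} = \tau'_{\tilal}$, then a case analysis for the SE-decreasing condition. The paper organizes the cases differently---by whether $j > r_0$, and then by whether $k = \alpha_j$---and dispatches your ``main obstacle'' (both cells in $\sfseq_2(\alpha)$) tersely inside its Case~1 as ``by the definition of $\tau'_\alpha$,'' whereas you spell out the bookkeeping of the \textbf{P3b} loop. Both arrive at the same verification.

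One factual slip: it is not true that in the case $\alpha_{r_0} \geq 3$ every $t > r_0$ satisfies $\alpha_t = 1$; for $\alpha = (5,2)$ one has $r_0 = 1$ and $\alpha_2 = 2$. The correct bound, coming from termination in \textbf{P2}, is $\alpha_t \leq \alpha_{r_0} - 2$ for $t > r_0$, and this is what your elimination of the mixed case actually needs: if $(i,k+1) \in \sfseq_1(\alpha)$ then $k+1 = \alpha_i \geq \alpha_{r_0}$, while $k \leq \alpha_j \leq \alpha_{r_0} - 2$ when $j > r_0$, a contradiction. You also omit the subcase $(i,k+1) \in \sfseq_1(\alpha)$, $(j,k) \in \sfseq_2(\alpha)$, but there the value ranges (cells in $\sfseq_2$ precede those in $\sfseq_1$ in $\sfseq(\alpha)$) settle the inequality immediately.
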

\begin{proof}
We will prove the assertion using mathematical induction on \(n\). 
If \(n = 1\), then the assertion holds trivially.
Assume \(n > 1\). 
If \(\tilal = \varnothing\), then \(\alpha = (1^n)\), and again,  the assertion holds trivially.
Now, suppose \(\tilal \neq \varnothing\). 
Since \(1 \leq |\tilal| < n\), by \cref{lem: removing seq from original comp}, the induction hypothesis ensures that \(\tau'_{\tilal} \in \nSYCT(\tilal)\). 
From this containment one can derive that $\tau'_\alpha$ is a standard extended tableau.
It remains to show that $\tau'_\alpha$ is SE-decreasing.

Let $(r_0, \alpha_{r_0})$ be the initial cell of $\sfseq_1(\alpha)$.
Since $\tilal \neq \varnothing$, we have $\alpha_{r_0} \geq 2$ and $r_0 \leq \ell(\tilal)$.
Let $1 \leq i < j \leq l$ with $\alpha_i > \alpha_j$ and $1 \leq k \leq \alpha_j$.
We prove our assertion by considering the following three cases.

{\it Case 1:} $j > r_0$. 
In this case, it follows from the definition of $\tau'_\alpha$ that $\tau'_\alpha(j, k) > \tau'_\alpha(i, k+1)$.

{\it Case 2:} $j \leq r_0$ with $\alpha_j > 1$ and $k = \alpha_j$.
In this case, we have $\alpha_j \geq \alpha_{r_0}$ by the assumption on $\alpha$.
This implies that $(i, \alpha_i), (j, \alpha_j) \in \sfseq_1(\alpha)$ 
by \cref{eq: simple obs on seq one}.
Therefore, 
$(j, \alpha_j)$ appears before $(i, \alpha_i)$ in $\sfseq_1(\alpha)$, which implies that
\[
\tau'_\alpha(j, \alpha_j) > \tau'_\alpha(i, \alpha_i) \geq \tau'_\alpha(i, \alpha_j + 1).
\]

{\it Case 3:} If neither of the above applies, by \cref{lem: removing seq from original comp}(2),  we have \(\tilal_i > \tilal_j\).
Additionally, it is easy to see that $k \leq \tilal_j$, and thus,  $k+1 \leq \tilal_i$. 
Combining these observations with $\tau'_{\tilal} \in \nSYCT(\tilal)$, we derive that   
\[
\tau'_\alpha(j, k) = \tau'_{\tilal}(j, k) > \tau'_{\tilal}(i, k+1) = \tau'_\alpha(i, k+1).
\]
\end{proof}

We are now prepared to state the second main result of this subsection.
\begin{theorem} \label{second main result of sec 5.3}
    If \(\alpha\) is a composition of \(n\) obtained by shuffling a partition and \((1^k)\) for some \(k \geq 0\), then
    \[
    \calK_\alpha=[\sfrow(\sfT_\alpha), \sfrow(\tau'_\alpha)]_L.
    \]
\end{theorem}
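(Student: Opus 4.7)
The plan is to prove the two inclusions separately, using the identification $\calK_\alpha = \{\sfrow(T) \mid T \in \nSYCT(\alpha)\}$ from \cref{prop: description for Gamma wrt nSYCT} together with \cref{interval module structures of V and X}(2), which ensures that every element of either side has the form $\sfrow(T)$ for some $T \in \SET(\alpha)$. As a preliminary, I would first verify $\sfT_\alpha \in \nSYCT(\alpha)$: using the explicit formula $\sfT_\alpha(i,j) = \alpha_1 + \cdots + \alpha_{i-1} + j$, the required inequality $\sfT_\alpha(j,k) > \sfT_\alpha(i,k+1)$ for $i<j$ with $\alpha_i > \alpha_j$ reduces to $\alpha_i + \cdots + \alpha_{j-1} \ge 2$, which holds because $\alpha_i > \alpha_j \ge 1$ forces $\alpha_i \ge 2$. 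Together with \cref{lem: tau'al in nSYCT}, this places both endpoints in $\calK_\alpha$, and since $\Des_L(\sfrow(\sfT_\alpha)) = \varnothing$, the word $\sfrow(\sfT_\alpha)$ is the minimum of the ambient interval $[\sfrow(\sfT_\alpha), \sfrow(\sfT'_\alpha)]_L$, in particular of $\calK_\alpha$.

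For the inclusion $\calK_\alpha \subseteq [\sfrow(\sfT_\alpha), \sfrow(\tau'_\alpha)]_L$, I would establish the upper bound by an exchange argument: given $T \in \nSYCT(\alpha)$ with $T \ne \tau'_\alpha$, locate a site $i$ such that $i \notin \Des_L(\sfrow(T))$ and the tableau $T'$ obtained from $T$ by swapping $i$ and $i+1$ lies in $\nSYCT(\alpha)$ and covers $T$ in the left weak Bruhat order on reading words, while $T'$ is measurably closer to $\tau'_\alpha$ (for instance, in a lexicographic order on the positions of the entries $1, 2, \ldots, n$). The structural information about $\tau'_\alpha$ provided by \cref{proc: algo for defining sink}---that its largest entries occupy the cells of $\sfseq_2(\alpha)$ followed by $\sfseq_1(\alpha)$ in a prescribed order, with the remainder of the filling formed recursively via $\tau'_{\tilal}$---guarantees that such an ascending swap exists and preserves both the SET property and the SE-decreasing property, and that iterating the procedure terminates at $\tau'_\alpha$.

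For the reverse inclusion, let $\sigma \in [\sfrow(\sfT_\alpha), \sfrow(\tau'_\alpha)]_L$. Since $\tau'_\alpha \in \SET(\alpha)$ gives $\sfrow(\tau'_\alpha) \preceq_L \sfrow(\sfT'_\alpha)$, we have $\sigma \in [\sfrow(\sfT_\alpha), \sfrow(\sfT'_\alpha)]_L$, hence $\sigma = \sfrow(T)$ for some $T \in \SET(\alpha)$, and the task is to verify that $T$ is SE-decreasing. Arguing by contradiction, assume $T(i,k+1) \ge T(j,k)$ for some $i < j$ with $\alpha_i > \alpha_j$ and some $1 \le k \le \alpha_j$. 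The plan is to exhibit an inversion of $\sfrow(T)$ that fails to be an inversion of $\sfrow(\tau'_\alpha)$, contradicting $\rmInv_L(\sfrow(T)) \subseteq \rmInv_L(\sfrow(\tau'_\alpha))$. The candidate inversion is built from the pair $(T(i,k+1), T(j,k))$ and the corresponding entries of $\tau'_\alpha$ in the same cells, and the analysis splits according to whether $(i,k+1)$ and $(j,k)$ lie in $\tcd(\tilal)$, $\sfseq_1(\alpha)$, or $\sfseq_2(\alpha)$. An induction on $n = |\alpha|$, invoking \cref{lem: removing seq from original comp} to relate $\alpha$ to $\tilal$, should reduce the fully-internal case to the inductive hypothesis.

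The hard part will be this last step: the local SE-violation must be translated into a global inversion defect uniformly across the case split, and the inequalities governing the entries placed on $\sfseq_1(\alpha)$ versus those placed on $\sfseq_2(\alpha)$ by \cref{proc: algo for defining sink} require careful bookkeeping to compare the reading words $\sfrow(T)$ and $\sfrow(\tau'_\alpha)$ at the right pair of positions.
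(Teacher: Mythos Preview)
Your two-inclusion plan via \cref{prop: description for Gamma wrt nSYCT} is sound, but you have located the difficulty in the wrong place, and one direction is left genuinely incomplete.

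For the inclusion $[\sfrow(\sfT_\alpha),\sfrow(\tau'_\alpha)]_L \subseteq \calK_\alpha$, your inversion-defect idea is correct and in fact needs neither a case split nor induction. If $T\in\SET(\alpha)$ satisfies $T(i,k+1) > T(j,k)$ for some $i<j$ with $\alpha_i>\alpha_j$, then at the positions $p_1:=\iota_\alpha((i,k+1))<p_2:=\iota_\alpha((j,k))$ we have $(p_1,p_2)\in\rmInv_L(\sfrow(T))$; but $\tau'_\alpha$ is SE-decreasing by \cref{lem: tau'al in nSYCT}, so $\tau'_\alpha(i,k+1)<\tau'_\alpha(j,k)$ and hence $(p_1,p_2)\notin\rmInv_L(\sfrow(\tau'_\alpha))$. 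That is the whole argument. (Incidentally, $\Des_L(\sfrow(\sfT_\alpha))$ is not empty; the lower bound holds simply because $\sfrow(\sfT_\alpha)$ is the bottom of the ambient interval.) The paper proves this inclusion by a different route, sandwiching $\sh(\hatPtab(\sigma w_0))$ between $\alpha$ and $\alpha$ using \cref{eq: left bruhat order and shape} and \cref{lem: shape and SIT word}; your argument is more elementary.

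The inclusion $\calK_\alpha\subseteq[\sfrow(\sfT_\alpha),\sfrow(\tau'_\alpha)]_L$ is where the real work lies, and your exchange argument is not a proof as written. You assert that ``the structural information \ldots\ guarantees that such an ascending swap exists and preserves both the SET property and the SE-decreasing property'', but you do not say which swap to take, why SE-decreasingness survives it, or why the process must terminate at $\tau'_\alpha$ rather than at some other maximal element of $\nSYCT(\alpha)$ (a priori there could be several). The paper avoids this by proving $\rmInvC(T)\subseteq\rmInvC(\tau'_\alpha)$ directly: it shows that for any $T\in\nSYCT(\alpha)$ the cross-row inversions decompose as $\rmInvC(T)=\rmInvC(T^{(1)})\sqcup\rmInvC(T^{(2)})\sqcup X$, where $T^{(1)},T^{(2)}$ are the restrictions of $T$ to $\tcd(\tilal)$ and to the cells of $\sfseq_2(\alpha)$, and $X$ collects pairs with first cell in $\sfseq_1(\alpha)$; the analogous decomposition for $\tau'_\alpha$ has the largest possible third piece, and induction on $|\alpha|$ via \cref{lem: removing seq from original comp} handles the first two. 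The inductive machinery you sketched for the reverse inclusion is exactly what is needed here instead.
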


Before proceeding with the proof, let us collect the necessary notations.  
Define the bijection \(\iota_\alpha : \tcd(\alpha) \to [n]\) by  
\[
\iota_\alpha((i, j)) = \sum_{1 \leq t \leq i} \alpha_t - j + 1.
\]
For a filling \(T\) of \(\tcd(\alpha)\), define  
\[
\rmInvC(T) := \{((i_1, j_1), (i_2, j_2)) \in \tcd(\alpha) \times \tcd(\alpha) \mid i_1 < i_2 \text{ and } T(i_1, j_1) > T(j_2, j_2)\}.
\]
If \(T \in \SET(\alpha)\), then  
\begin{align}\label{eq: inversion of SET}
\rmInv_L(\sfrow(T)) = \{(\iota_\alpha(C_1), \iota_\alpha(C_2)) \mid (C_1, C_2) \in \rmInvC(T)\} \sqcup A_\alpha,
\end{align}
where  
\[
A_\alpha = \{(\iota_\alpha((i, j)), \iota_\alpha((i, k))) \mid 1 \leq i \leq \ell(\alpha) \text{ and } 1 \leq k < j \leq \alpha_i\}.
\]

\medskip 
\noindent
{\it Proof of \cref{second main result of sec 5.3}}.  
For simplicity, let $I := [\sfrow(\sfT_\alpha), \sfrow(\tau'_\alpha)]_L$ and $l := \ell(\alpha)$.

We first show that $\calK_\alpha \supseteq I$.
Let \(\sigma \in I\).    
Observe that \(\sfT_\alpha \in \nSYCT(\alpha)\), and \(\tau'_\alpha \in \nSYCT(\alpha)\) by \cref{lem: tau'al in nSYCT}.  
By \cref{prop: description for Gamma wrt nSYCT}, we obtain  
\[
\sh(\hatPtab(\sfrow(\sfT_\alpha) w_0)) = \alpha = \sh(\hatPtab(\sfrow(\tau'_\alpha) w_0)), 
\]  
and thus 
\[
\sh(\Ptab(\sfrow(\sfT_\alpha) w_0)) = \lambda(\alpha) = \sh(\Ptab(\sfrow(\tau'_\alpha) w_0)).
\]
On the other hand, since \(\sfrow(\tau'_\alpha) w_0 \preceq_L \sigma w_0 \preceq_L \sfrow(\sfT_\alpha) w_0\), it follows from \cref{eq: left bruhat order and shape} that  
$\sh(\Ptab(\sigma w_0)) = \lambda(\alpha)$.
Therefore, by \cref{lem: shape and SIT word},
we derive that 
$\sh(\hatPtab(\sigma w_0)) = \alpha$,
and thus, $\sigma \in \calK_\alpha$.

Next, we show that $\calK_\alpha \subseteq I$.
Let $T \in \nSYCT(\alpha)$.
Since it has been already shown that $\sfrow(\sfT_\alpha) \preceq_L \sfrow(T)$, it suffices to show that $\sfrow(T) \preceq_L \sfrow(\tau'_\alpha)$, that is, $\rmInv_L(\sfrow(T)) \subseteq \rmInv_L(\sfrow(\tau'_\alpha))$.
By \cref{eq: inversion of SET}, this is equivalent to 
\begin{equation}\label{inclusion of InvC}
\rmInvC(T) \subseteq \rmInvC(\tau'_\alpha).
\end{equation}

Before proving this inclusion, we partition $\rmInvC(T)$ into three  disjoint subsets.
Let $(r_0, \alpha_{r_0})$ be the cell in the first entry of $\sfseq_1(\alpha)$.
Let $T^{(1)}$ be the subfilling of $T$ consisting of all cells not in $\sfseq_1(\alpha)$ or $\sfseq_2(\alpha)$, and let $T^{(2)}$ be the subfilling of $T$ consisting of the cells in $\sfseq_2(\alpha)$. 
For later use, we note that the standardizations of $T^{(1)}$ and $T^{(2)}$ are SE-decreasing standard extended tableaux.
In particular, when $T$ is equal to $\tau'_\alpha$, it is not difficult to see that the standardizations of $T^{(1)}$ and $T^{(2)}$ are $\tau'_{\tilal}$ and $\tau'_\beta$, respectively,
where $\beta = (\alpha_{r_0 + 1}, \ldots, \alpha_l)$.
We claim that 
\begin{align}\label{eq: partition InvC into three parts}
\rmInvC(T) = \rmInvC(T^{(1)}) \sqcup \rmInvC(T^{(2)}) \sqcup X, 
\end{align}
where 
$$X = \{(C_1, C_2) \in  \rmInvC(T) \mid C_1 \in \sfseq_1(\alpha), C_2 \in \tcd(\tilal) \}.$$
The inclusion \(\supseteq\) is evident. To show the reverse inclusion \(\subseteq\), we will verify that for \((i_1, j_1), (i_2, j_2) \in \tcd(\alpha)\) with \(i_1 < i_2\), the following conditions are satisfied:
\begin{itemize}
\item[(i)] if $(i_2, j_2) \in \sfseq_1(\alpha)$, then $T(i_1, j_1) < T(i_2, j_2)$ and 
\item[(ii)] if $(i_1, j_1) \notin \sfseq_2(\alpha)$ and $(i_2, j_2) \in \sfseq_2(\alpha)$, then $T(i_1, j_1) < T(i_2, j_2)$.
\end{itemize}
For (i), it suffices to consider the case where $\alpha_{i_1} > 1$.
By the assumption on $\alpha$, we see that $\alpha_{i_1} \geq \alpha_{i_2}$.
Since $j_2=\alpha_{i_2}$ and $T \in \nSYCT(\alpha)$, it follows that
$$
T(i_1, j_1) \leq T(i_1, \alpha_{i_1}) < T(i_2, \alpha_{i_2}) = T(i_2, j_2).
$$
Now, it remains to show (ii).  
From the condition $\sfseq_2(\alpha) \neq \emptyset$, we deduce that $\alpha_{r_0} = 2$ and $\alpha_{r_0 + 1} = 1$.
Since $T$ is SE-decreasing, it follows that 
$T(r_0, 2) < T(r_0 +1 , 1)$.
Moreover, we have 
\[
T(i_1, j_1) \leq T(r_0, 2) \text{ and } T(r_0 + 1, 1) \leq T(i_2, j_2),
\]
where the first inequality follows from (i), while the second one follows from $T \in \nSYCT(\alpha)$.
Combining these three inequalities   
verifies (ii).

In what follows, we prove the inclusion given in \cref{inclusion of InvC} using mathematical induction on $n = |\alpha|$.
For $n=0, 1$, this inclusion trivially holds.
Now, assume that $n > 1$ and the inclusion \eqref{inclusion of InvC} holds for all 
compositions of size \(< n\) obtained by shuffling a partition and \((1^k)\) for some \(k \geq 0\).
Let $\alpha$ be a composition of $n$ obtained by shuffling a partition and $(1^k)$ for some $k \geq 0$.
By \cref{lem: removing seq from original comp}, $\tilal$ is a composition obtained by shuffling a partition and $(1^k)$ for some $k \geq 0$.
Since $|\tilal| < n$ and  the standardization of  $T^{(1)}$ is an element of $\nSYCT(\tilal)$,  by  the induction hypothesis, we have 
\begin{align}\label{eq: first ind hypo}
    \rmInvC(T^{(1)}) \subseteq \rmInvC(\tau'_{\tilal}).
\end{align}
In a similar manner, viewing $\tau'_{\beta}$ as the subfilling of $\tau'_\alpha$ consisting of the cells in $\sfseq_2(\alpha)$, 
we obtain 
\begin{align}\label{eq: second ind hypo}
    \rmInvC(T^{(2)}) \subseteq \rmInvC(\tau'_{\beta}).
\end{align}
On the other hand, applying \cref{eq: partition InvC into three parts} to the case where $T$ is equal to $\tau'_\alpha$,  
it follows from the definition of $\tau'_\alpha$ that 
\[
\rmInvC(\tau'_\alpha) = \rmInvC(\tau'_{\tilal}) \sqcup \rmInvC(\tau'_\beta) \sqcup Y, 
\]
where 
\begin{align*}
Y = \{((i_1, j_1), (i_2, j_2)) \mid (i_1, j_1) \in \sfseq_1(\alpha), \text{ } (i_2, j_2) \in \tcd(\tilal), \text{ and $i_1 < i_2$} \}.
\end{align*}
Finally, by combining \cref{eq: first ind hypo} and \cref{eq: second ind hypo} with the inclusion  \(X \subseteq Y\), we obtain the desired result.  
\qed

\medskip
\cref{second main result of sec 5.3} tells us that 
\(\bfY_\alpha\) is endowed with the structure of a weak Bruhat interval module:
\begin{align*}
\bfY_\alpha \cong \sfB(\sfrow(\sfT_\alpha), \sfrow(\tau'_\alpha)).
\end{align*}
Combining this with \cite[Table 1]{22JKLO}  implies that the \(\upphi\)-twist \(\upphi[\bfY_\alpha]\) of \(\bfY_\alpha\) also has a weak Bruhat interval module structure:
\[
\upphi[\bfY_\alpha] \cong \sfB(\sfrow(\sfT_\alpha)^{w_0}, \sfrow(\tau'_\alpha)^{w_0}).
\]

\begin{example}
Let $\alpha = (3,1,2)$. Then  
\[
\tau'_\alpha = 
\begin{array}{c}
\begin{ytableau}
    4 & 6\\
    3\\
    1 & 2 & 5
\end{ytableau}
\end{array}.
\]
On the other hand, considering \cref{fig: figure for example X},
we have $\bfY_\alpha = \C B_3/ \C B_2$, and thus, we have
\[
\bfY_\alpha \cong \sfB(321465, 521364) = \sfB(\sfrow(\sfT_\alpha), \sfrow(\tau'_\alpha)).
\]
\end{example}

\begin{remark}\label{rem: WBIM structure of Yal}
For a general composition \(\alpha\), \(\bfY_\alpha\) does not necessarily have a weak Bruhat interval module structure.
For instance, let $\alpha = (2,3,1)$.
In this case, the $H_6(0)$-action on $\calK_\alpha$ is illustrated  in \cref{fig: Ya not WBIM str}.
\begin{figure}[t]
\[
\def \vp {1.3}
\def \hp {2.5}
\begin{tikzpicture}
%%%node
\node[] at (0*\hp, 0*\vp) {$215436$};

\node[] at (-1*\hp, -1*\vp) {$315426$};
\node[] at (1*\hp, -1*\vp) {$216435$};

\node[] at (-2*\hp, -2*\vp) {$415326$};
\node[] at (0*\hp, -2*\vp) {$316425$};

%%%%%%%%%%%%%%%%%%loop%%%%%%%%%%%%%%%%%%%%%%%%%%%%
\node at (0*\hp + 0.2*\hp, 0*\vp) {} edge [out=40,in=320, loop] ();
\node[right] at (0.4*\hp, 0*\vp) {\footnotesize $\pi_1, \pi_3, \pi_4$};

\node at (-1*\hp + 0.2*\hp, -1*\vp) {} edge [out=40,in=320, loop] ();
\node[right] at (-1*\hp + 0.4*\hp, -1*\vp) {\footnotesize $\pi_2, \pi_4$};
\node at (1*\hp + 0.2*\hp, -1*\vp) {} edge [out=40,in=320, loop] ();
\node[right] at (1*\hp + 0.4*\hp, -1*\vp) {\footnotesize $\pi_1, \pi_3, \pi_5$};

\node at (-2*\hp + 0.2*\hp, -2*\vp) {} edge [out=40,in=320, loop] ();
\node[right] at (-2*\hp + 0.4*\hp, -2*\vp) {\footnotesize $\pi_2, \pi_3$};
\node at (0*\hp + 0.2*\hp, -2*\vp) {} edge [out=40,in=320, loop] ();
\node[right] at (0*\hp + 0.4*\hp, -2*\vp) {\footnotesize $\pi_2, \pi_5$};

%%%%%%%%%%%%%%%%%%%%%%EDGE%%%%%%%%%%%%%%%%%%%%%%%%
\draw[->] (-0.2*\hp, -0.3*\vp) -- (-0.8*\hp, -0.7*\vp);
\node at (-0.6*\hp, -0.4*\vp) {\footnotesize $\pi_2$};
\draw[->] (0.2*\hp, -0.3*\vp) -- (0.8*\hp, -0.7*\vp);
\node at (0.6*\hp, -0.4*\vp) {\footnotesize $\pi_5$};

%%%%
\draw[->] (-1.2*\hp, -1.3*\vp) -- (-1.8*\hp, -1.7*\vp);
\node at (-1.6*\hp, -1.4*\vp) {\footnotesize $\pi_3$};
\draw[->] (-0.8*\hp, -1.3*\vp) -- (-0.2*\hp, -1.7*\vp);
\node at (-0.4*\hp, -1.4*\vp) {\footnotesize $\pi_5$};

\draw[->] (0.8*\hp, -1.3*\vp) -- (0.2*\hp, -1.7*\vp);
\node at (0.4*\hp, -1.4*\vp) {\footnotesize $\pi_2$};
\end{tikzpicture}
\]
\caption{The $H_6(0)$-action on $\calK_{(2,3,1)}$}
\label{fig: Ya not WBIM str}
\end{figure}
We claim that there is no left weak Bruhat interval $I$ in $\SG_6$ such that $\bfY_\alpha \cong \sfB(I)$.
Suppose, for the sake of contradiction, that such an interval \(I = [\sigma, \rho]_L\) exists.
Let $f: \bfY_\alpha \ra \sfB(\sigma, \rho)$ be an $H_6(0)$-module  isomorphism.
Note that there exists a unique \(\gamma \in \calK_\alpha\) such that \(\Des_L(\gamma) \supseteq \{1, 3, 4\}\) and this unique element is \(215436\).
Therefore, there exists a unique $\gamma' \in [\sigma, \rho]_L$ such that $\Des_L(\gamma') \supseteq \{1,3,4\}$ and $f(215436) = \gamma'$.
Since $215436$ is a generator for $\bfY_\alpha$, $\gamma'$ must be $\sigma$.
This contradicts our assumption that $I$ is a left weak Bruhat itnerval.
\end{remark}

\section{Remarks concerned with future research}
\label{Further avenues}
% We close this paper by providing some remarks concerned with future research.

\noindent
(1) As seen in \cref{rem: WBIM structure of Yal}, for a general composition $\alpha$, $\calK_\alpha$ is not necessarily a left weak Bruhat interval.
It would be nice to characterize when  $\calK_\alpha$ is a left weak Bruhat interval, more generally, when   $\bfY_\alpha$ has a weak Bruhat interval module structure.

\noindent
(2)
Let $I$ be a left weak Bruhat interval in $\SG_n$.
It was shown in~\cite[Section 6.2]{23KLO} that if \( I \) is a dual plactic-closed,
then \( B(I) \) admits a distinguished filtration with respect to the Schur basis. 
A natural next step is to characterize the conditions under which the module \( B(I) \) admits a distinguished filtration with respect to $\{\scrS_\alpha \mid \alpha \models n\}$ or $\{\hscrS_\alpha \mid \alpha \models n\}$ (see \cref{no filtration example}).  

\noindent
(3) 
When \( I \) is a dual plactic-closed left weak Bruhat interval in \( \SG_n \), 
let 
\[
0 = M_0 \subsetneq M_1 \subsetneq \cdots \subsetneq M_l = \sfB(I)
\]  
be a distinguished filtration of \( \sfB(I) \) with respect to the Schur basis, as constructed in the proof of \cite[Theorem 6.7]{23KLO}.
Since $\ch([M_i/M_{i-1}])$ is a Schur function, it is natural to  ask whether \( M_i/M_{i-1} \) admits a distinguished filtration with respect to $\{\scrS_\alpha \mid \alpha \models n\}$ or $\{\hscrS_\alpha \mid \alpha \models n\}$.
The validity of this question has been checked for values of $n$ up to $9$ with the aid of the computer program 
\textsc{SageMath}.

\appendix
\section{Examples of \( H_n(0) \)-modules that have no distinguished filtrations with respect to \( \{\scrS_\alpha \mid \alpha \models n\} \) or \( \{\hscrS_\alpha \mid \alpha \models n\} \).}\label{no filtration example}

In this appendix, we present an example to show that when \( P \) is a regular Schur labeled poset with the underlying set \( [n] \), the associated \( H_n(0) \)-module \( \sfM_P \), as defined in \cite[Definition 2.8]{23KLO}, does not necessarily admit a distinguished filtration with respect to \( \{\scrS_\alpha \mid \alpha \models n\} \) or \( \{\hscrS_\alpha \mid \alpha \models n\} \).

Consider the $H_8(0)$-modules $X_{(5,2,1)}$ and its $\upphi$-twist $\upphi[X_{(5,2,1)}]$.
In view of \cite[Appendix]{23KLO}, one sees that each of these modules is isomorphic to the $H_8(0)$-module associated with a regular Schur labeled skew shape poset with underlying set $[8]$.

{\it Claim 1.} $X_{(5,2,1)}$ has no distinguished filtrations 
with respect to $\{\scrS_\alpha \mid \alpha \models 8 \}$.

We begin by observing that 
$$\ch([X_{(5,2,1)}])=s_{(5,2,1)}=\sum_{\substack{\alpha \models 8\\ \lambda(\alpha)=(5,2,1)}}\scrS_{\alpha}$$
and $X_{(5,2,1)} \cong \sfB(54321768, 87641523)$ as $H_8
(0)$-modules.
Suppose on the contrary that $\sfB(54321768, 87641523)$ has a distinguished filtration 
\[
0 =: M_0 \subsetneq M_1 \subsetneq \cdots \subsetneq M_6 := \sfB(54321768, 87641523)
\]
with respect to $\{\scrS_\alpha \mid \alpha \models 8 \}$.
We note that the sink of $\sfB(54321768, 87641523)$ is isomorphic to $\bfF_{(1,1,2,4)}$ and thus $[F_{(1,1,2,4)}]\ch([M_i]) > 0$ for $1\le i \le 6$.
Combining this with the fact that 
\[
[F_{(1,1,2,4)}]s_{(5,2,1)} = 1 \quad \text{and} \quad [F_{(1,1,2,4)}]\scrS_{(1,2,5)} = 1,
\]
we see that $\ch([M_1])=\scrS_{(1,2,5)}$ and $\dim M_1=16$.
Given a composition series of $M_1$
\[
0 =: N_0 \subsetneq N_1 \subsetneq \cdots \subsetneq N_{16} := M_1,
\]
let $\{v_1, v_2, \ldots, v_{16}\}$ be a basis for $M_1$ such that 
such that $\{v_1, v_2, \ldots, v_{j}\}$ is a basis for $N_j$ for $1\le j \le 16$.
Since $[F_{(2,2,4)}]\scrS_{(1,2,5)} = 1$, there exists a unique $1 \leq j \leq 16$ such that 
\begin{align}\label{eq: unique index j comp factor}
N_{j}/N_{j-1} \cong \bfF_{(2,2,4)}.
\end{align}
For each \(\sigma \in [54321768, 87641523]_L\), observe that \(\pi_{w_0(\{1,3,5,6,7\})} \cdot \sigma\) is one of \(0\), \(87421635\), or \(87621435\).
This implies that 
$$\pi_{w_0(\{1,3,5,6,7\})}v_j \in \C\{87421635, 87621435\}.$$
Let 
$$\pi_{w_0(\{1,3,5,6,7\})}v_j = c \,87421635 + d\, 87621435.$$
By \cref{eq: unique index j comp factor}, it holds that 
$\pi_{w_0(\{1,3,5,6,7\})}v_j - v_j \in N_{j-1}$ and $\pi_{w_0(\{1,3,5,6,7\})}v_j \neq 0$.
It follows that 
$$N_j = \C \{v_1,v_2, \ldots, v_{j-1}, c\,87421635 + d\,87621435\}.$$  
If $c = 0$ or $d =0$, then $L := \sfB(87621534, 87641523)$ is a submodule of $M_1$.
However, this cannot occur since 
$$[F_{(2,1,5)}]\scrS_{(1,2,5)} = 0 \quad \text{ and } \quad [F_{(2,1,5)}]\ch([L]) = 1.$$
Next, suppose that $c, d \neq 0$.
Since 
$$\pi_2 \cdot (c\,87421635 + d\,87621435) = c\,87431625 + d\,87631425$$
and 
$$\pi_3\pi_2 \cdot (c87421635 + d87621435) = c87431625,$$ 
it follows that $87631425 \in M_1$.
Therefore, $L' := \sfB(87631425, 87641523)$ is a submodule of $M_1$.
However, this cannot occur since
$$[F_{(1,2,1,4)}]\ch([L']) = 1 \quad \text{ and } \quad  [F_{(1,2,1,4)}]\scrS_{(1,2,5)} = 0.$$
For a pictorial description of \(L\) and \(L'\), along with the \(H_8(0)\)-action on \(\sfB(54321768, 87641523)\), see \cref{fig: appendix example}. 
Consequently, \(\sfB(54321768, 87641523) (\cong X_{(5,2,1)})\) does not admit a distinguished filtration with respect to \(\{\scrS_\alpha \mid \alpha \models 8\}\).

{\it Claim 2.} $\upphi[X_{(5,2,1)}]$
has no distinguished filtrations with respect to \(\{\hscrS_\alpha \mid \alpha \models 8\}\).

This follows from {\it Claim 1} since \(\upphi\) induces an equivalence on the category of finitely  generated \(H_n(0)\)-modules and \(\upphi[\bfF_\beta] \cong \bfF_{\beta^{\rm r}}\).

\begin{figure}
\centering
\begin{adjustbox}{addcode={\begin{minipage}{\width}}{\caption{The $H_8(0)$-action on the basis $[54321768, 87641523]_L$ for $\sfB(54321768, 87641523) \cong X_{(5,2,1)}$ and bases for $L$ and $L'$
      }
\label{fig: appendix example}
\end{minipage}},rotate=90,center}
\includegraphics[scale = 0.58, angle = 0]{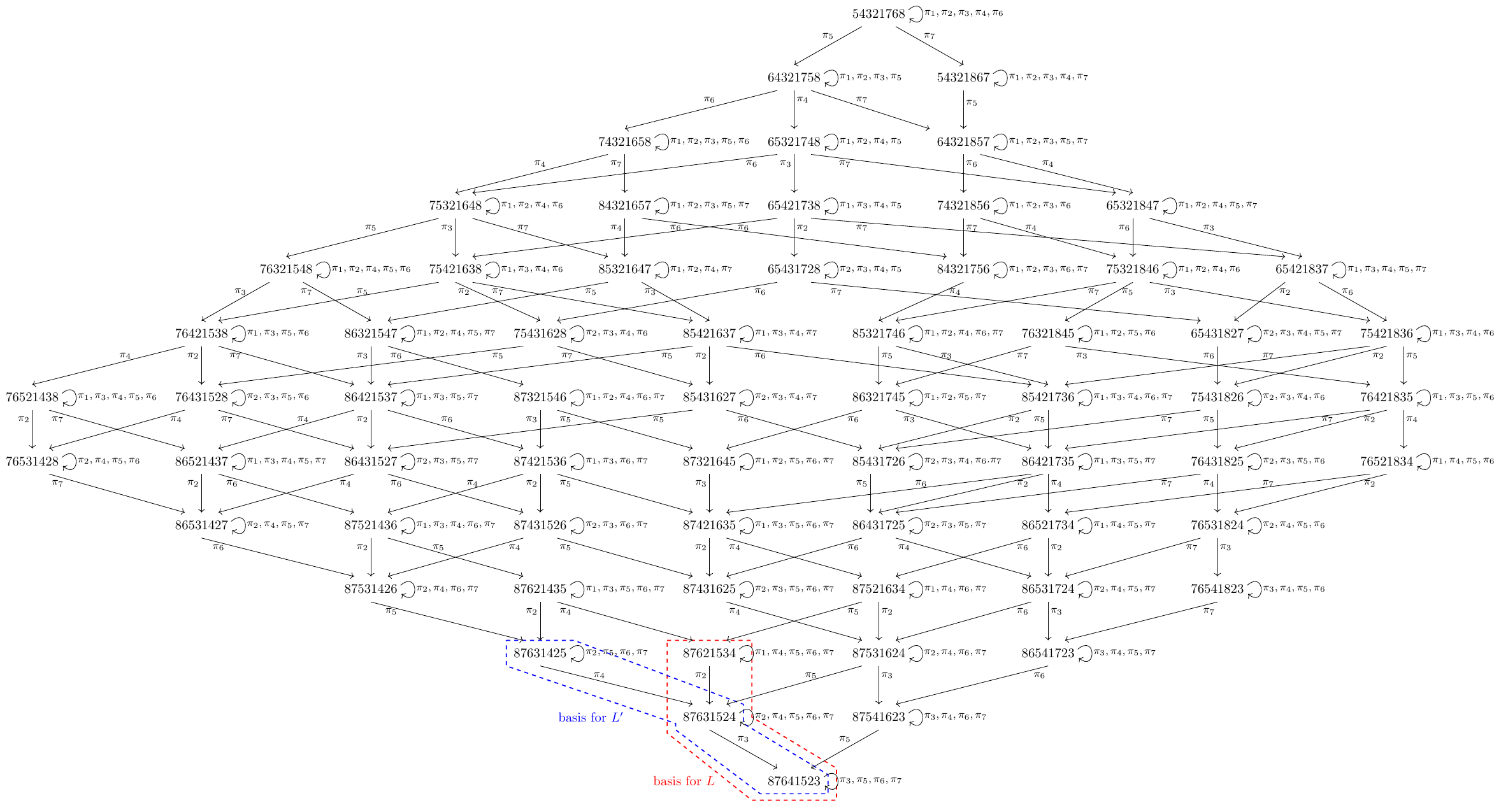}
\end{adjustbox}  

\end{figure}

\newpage
\paragraph{{\bf Acknowledgments.}}
The first author was supported by NRF grant funded by Basic Science Research Program through NRF funded by the Ministry of Education (No. RS-2023-00271282),  NRF grant funded by the Korea government(MSIT) (No. RS-2024-00342349), and the Sogang University Research Grant of 2024(No. 202412001.01).
The second author was supported by NRF grant funded by the Korea government(MSIT) (No. RS-2024-00342349).
\bibliographystyle{abbrv}

\begin{thebibliography}{10}

\bibitem{18AHM}
E.~E. Allen, J.~Hallam, and S.~Mason.
\newblock {D}ual immaculate quasisymmetric functions expand positively into {Y}oung quasisymmetric {S}chur functions.
\newblock {\em J. Combin. Theory Ser. A}, 157:70--108, 2018.

\bibitem{19AS}
S.~Assaf and D.~Searles.
\newblock {K}ohnert polynomials.
\newblock {\em Experiment. Math.}, pages 1--27, 2019.

\bibitem{14BBSSZ}
C.~Berg, N.~Bergeron, F.~Saliola, L.~Serrano, and M.~Zabrocki.
\newblock A lift of the {S}chur and {H}all-{L}ittlewood bases to non-commutative symmetric functions.
\newblock {\em Canad. J. Math.}, 66(3):525--565, 2014.

\bibitem{15BBSSZ}
C.~Berg, N.~Bergeron, F.~Saliola, L.~Serrano, and M.~Zabrocki.
\newblock Indecomposable modules for the dual immaculate basis of quasi-symmetric functions.
\newblock {\em Proc. Amer. Math. Soc.}, 143(3):991--1000, 2015.

\bibitem{17BBSSZ}
C.~Berg, N.~Bergeron, F.~Saliola, L.~Serrano, and M.~Zabrocki.
\newblock Multiplicative structures of the immaculate basis of non-commutative symmetric functions.
\newblock {\em J. Combin. Theory Ser. A}, 152:10--44, 2017.

\bibitem{09BL}
N.~Bergeron and H.~Li.
\newblock Algebraic structures on {G}rothendieck groups of a tower of algebras.
\newblock {\em J. Algebra}, 321(8):2068--2084, 2009.

\bibitem{16BSZ}
N.~Bergeron, J.~S{\'a}nchez-Ortega, and M.~Zabrocki.
\newblock The pieri rule for dual immaculate quasi-symmetric functions.
\newblock {\em Annals of Combinatorics}, 20:283--300, 2016.

\bibitem{11BLW}
C.~Bessenrodt, K.~Luoto, and S.~van Willigenburg.
\newblock Skew quasisymmetric {S}chur functions and noncommutative {S}chur functions.
\newblock {\em Adv. Math.}, 226(5):4492--4532, 2011.

\bibitem{05BB}
A.~Bj\"{o}rner and F.~Brenti.
\newblock {\em Combinatorics of {C}oxeter groups}, volume 231 of {\em Graduate Texts in Mathematics}.
\newblock Springer, New York, 2005.

\bibitem{20CKNO}
S.-I. Choi, Y.-H. Kim, S.-Y. Nam, and Y.-T. Oh.
\newblock Modules of the 0-{H}ecke algebra arising from standard permuted composition tableaux.
\newblock {\em J. Combin. Theory Ser. A}, 179:105389, 34, 2021.

\bibitem{22CKNO2}
S.-I. Choi, Y.-H. Kim, S.-Y. Nam, and Y.-T. Oh.
\newblock Homological properties of 0-{H}ecke modules for dual immaculate quasisymmetric functions.
\newblock {\em Forum Math. Sigma}, 10:e91, 2022.

\bibitem{22CKNO}
S.-I. Choi, Y.-H. Kim, S.-Y. Nam, and Y.-T. Oh.
\newblock The projective cover of tableau-cyclic indecomposable {$H_n(0)$}-modules.
\newblock {\em Trans. Amer. Math. Soc.}, 375(11):7747--7782, 2022.

\bibitem{11DY}
B.~Deng and G.~Yang.
\newblock Representation type of 0-{H}ecke algebras.
\newblock {\em Sci. China Math.}, 54(3):411--420, 2011.

\bibitem{02DHT}
G.~Duchamp, F.~Hivert, and J.-Y. Thibon.
\newblock Noncommutative symmetric functions. {VI}. {F}ree quasi-symmetric functions and related algebras.
\newblock {\em Internat. J. Algebra Comput.}, 12(5):671--717, 2002.

\bibitem{96DKLT}
G.~Duchamp, D.~Krob, B.~Leclerc, and J.-Y. Thibon.
\newblock Fonctions quasi-sym\'{e}triques, fonctions sym\'{e}triques non commutatives et alg\`ebres de {H}ecke \`a {$q=0$}.
\newblock {\em C. R. Acad. Sci. Paris S\'{e}r. I Math.}, 322(2):107--112, 1996.

\bibitem{97Ful}
W.~Fulton.
\newblock {\em Young tableaux}, volume~35 of {\em London Mathematical Society Student Texts}.
\newblock Cambridge University Press, Cambridge, 1997.
\newblock With applications to representation theory and geometry.

\bibitem{84Gessel}
I.~M. Gessel.
\newblock Multipartite {$P$}-partitions and inner products of skew {S}chur functions.
\newblock In {\em Combinatorics and algebra ({B}oulder, {C}olo., 1983)}, volume~34 of {\em Contemp. Math.}, pages 289--317. Amer. Math. Soc., Providence, RI, 1984.

\bibitem{74Gre}
C.~Greene.
\newblock An extension of {S}chensted's theorem.
\newblock {\em Adv. Math.}, 14:254--265, 1974.

\bibitem{11HLMW}
J.~Haglund, K.~Luoto, S.~Mason, and S.~van Willigenburg.
\newblock Quasisymmetric {S}chur functions.
\newblock {\em J. Combin. Theory Ser. A}, 118(2):463--490, 2011.

\bibitem{11HLMW2}
J.~Haglund, K.~Luoto, S.~Mason, and S.~van Willigenburg.
\newblock Refinements of the {L}ittlewood--{R}ichardson rule.
\newblock {\em Trans. Amer. Math. Soc.}, 363(3):1665--1686, 2011.

\bibitem{22JKLO}
W.-S. Jung, Y.-H. Kim, S.-Y. Lee, and Y.-T. Oh.
\newblock Weak {B}ruhat interval modules of the 0-{H}ecke algebra.
\newblock {\em Math. Z.}, 301(4):3755--3786, 2022.

\bibitem{23KLO}
Y.-H. Kim, S.-Y. Lee, and Y.-T. Oh.
\newblock Regular {S}chur labeled skew shape posets and their 0-hecke modules.
\newblock {\em Forum Math. Sigma}, 12:e110, 2024.

\bibitem{19Konig}
S.~K\"{o}nig.
\newblock The decomposition of {$0$}-{H}ecke modules associated to quasisymmetric {S}chur functions.
\newblock {\em Algebr. Comb.}, 2(5):735--751, 2019.

\bibitem{13LMvW}
K.~Luoto, S.~Mykytiuk, and S.~van Willigenburg.
\newblock {\em An introduction to quasisymmetric {S}chur functions}.
\newblock SpringerBriefs in Mathematics. Springer, New York, 2013.

\bibitem{24MN}
C.~Marcum and E.~Niese.
\newblock Positive expansions of extended {S}chur functions in the {Y}oung quasisymmetric {S}chur basis.
\newblock {\em Involve}, Vol. 17(2):217--232, 2024.

\bibitem{06Mason}
S.~Mason.
\newblock A decomposition of {S}chur functions and an analogue of the {R}obinson-{S}chensted-{K}nuth algorithm.
\newblock {\em S\'em. Lothar. Combin.}, 57:Art. B57e, 24, 2006/08.

\bibitem{04Mel}
A.~Melnikov.
\newblock On orbital variety closures in {$sl_n$}. {I}. {I}nduced {D}uflo order.
\newblock {\em J. Algebra}, 271(1):179--233, 2004.

\bibitem{79Norton}
P.~Norton.
\newblock {$0$}-{H}ecke algebras.
\newblock {\em J. Austral. Math. Soc. Ser. A}, 27(3):337--357, 1979.

\bibitem{91Sag}
B.~E. Sagan.
\newblock {\em The symmetric group - representations, combinatorial algorithms, and symmetric functions.}
\newblock Wadsworth \& Brooks / Cole mathematics series. Wadsworth, 1991.

\bibitem{61Sch}
C.~Schensted.
\newblock Longest increasing and decreasing subsequences.
\newblock {\em Canad. J. Math.}, 13:179--191, 1961.

\bibitem{19Searles}
D.~Searles.
\newblock Indecomposable {$0$}-{H}ecke modules for extended {S}chur functions.
\newblock {\em Proc. Amer. Math. Soc.}, 148(5):1933--1943, 2020.

\bibitem{99Stanley}
R.~Stanley.
\newblock {\em Enumerative combinatorics. {V}ol. 2}, volume~62 of {\em Cambridge Studies in Advanced Mathematics}.
\newblock Cambridge University Press, Cambridge, 1999.

\bibitem{06Taskin}
M.~Taskin.
\newblock {\em Properties of four partial orders on standard {Y}oung tableaux}.
\newblock ProQuest LLC, Ann Arbor, MI, 2006.
\newblock Thesis (Ph.D.)--University of Minnesota.

\bibitem{15TW}
V.~Tewari and S.~van Willigenburg.
\newblock Modules of the {$0$}-{H}ecke algebra and quasisymmetric {S}chur functions.
\newblock {\em Adv. Math.}, 285:1025--1065, 2015.

\end{thebibliography}

\end{document}